\def\appendix#1{
\addtocounter{section}{1} \setcounter{equation}{0}
\renewcommand{\thesection}{\Alph{section}}
\section*{Appendix \thesection\protect\indent\quad
#1}
}
\renewcommand{\theequation}{\thesection.\arabic{equation}}
\def\marginnote#1{}
\newtoks\amorpm
\edef\standardtime{{\ifnum\hour<12 \global\amorpm={am}%
        \else\global\amorpm={pm}\advance\hour by-12 \fi
        \ifnum\hour=0 \hour=12 \fi
        \number\hour:\ifnum\minute<10 0\fi\number\minute\the\amorpm}}
\edef\militarytime{\number\hour:\ifnum\minute<100\fi\number\minute}
\let\hhat=\widehat
\let\ttilde=\widetilde
\def\draftlabel#1{{\@bsphack\if@filesw {\let\thepage\relax
      \xdef\@gtempa{\write\@auxout{\string
          \newlabel{#1}{{\@currentlabel}{\thepage}}}}}\@gtempa \if@nobreak
    \ifvmode\nobreak\fi\fi\fi\@esphack} \gdef\@eqnlabel{#1}}
    \def\@eqnlabel{}
\def\@vacuum{}
\def\draftmarginnote#1{\marginpar{\raggedright\scriptsize\tt#1}}
\def\draft{
%
%
  \oddsidemargin -.5truein
  \def\@oddfoot{\footnotesize \sl preliminary draft \hfil
    \rm\thepage\hfil\sl\today\quad\militarytime}
  \let\@evenfoot\@oddfoot \overfullrule 3pt
    \let\label=\draftlabel
    \let\marginnote=\draftmarginnote
  \def\@eqnnum{(\theequation)\rlap{\kern\marginparsep\tt\@eqnlabel}%
    \global\let\@eqnlabel\@vacuum}

  }
\newcommand{\tr}{\,{\rm Tr}\,}
\newcommand{\ID}{1\!\!1}
\def\be{\begin{equation}}
\def\ee{\end{equation}}
\def\bea{\begin{eqnarray}}
\def\eea{\end{eqnarray}}
\def\<{\langle}
\def\>{\rangle}
\def\ddz{\frac{\rm d}{{\rm d}z}}
\def\nn{\nonumber}
\def\Tr{{\rm Tr}}
\def\one#1{#1^{\raise5pt\hbox{$\scriptstyle\!\!\!\!1$}}\,{}}
\def\two#1{#1^{\raise5pt\hbox{$\scriptstyle\!\!\!\!2$}}\,{}}
\def\onetwo#1{#1^{\raise5pt\hbox{$\scriptstyle\!\!\!\!\!{12}$}}\,{}}
\def\otim{\mathop{\otimes}}
\def\thi{\vartheta}
\newtheorem{theorem}{Theorem}[section]
\newtheorem{lm}[theorem]{Lemma}
\newtheorem{prop}[theorem]{Proposition}
\theoremstyle{definition}
\newtheorem{df}[theorem]{Definition}
\newtheorem{example}[theorem]{Example}
\newtheorem{remark}[theorem]{Remark}
\newtheorem{cor}[theorem]{Corollary}
\theoremstyle{remark}
\begin{document}
\title[Isomonodromic deformations and  Yangians in Teichm\"uller theory]
{Isomonodromic deformations and twisted Yangians arising in Teichm\"uller theory}
\author{Leonid Chekhov and Marta Mazzocco}

\maketitle

\begin{abstract}
In this paper we build a link between the Teichmuller theory of
hyperbolic Riemann surfaces and isomonodromic deformations of linear
systems whose monodromy group is the Fuchsian group associated to
the given hyperbolic Riemann surface by the Poincar\'e
uniformization. In the case of a one--sheeted hyperboloid with $n$
orbifold points we show that the Poisson algebra ${\mathfrak D}_n$ of
geodesic length functions is the semiclassical limit of the twisted
$q$--Yangian $Y'_q(\mathfrak{o}_n)$ for the orthogonal Lie algebra
$\mathfrak{o}_n$ defined by Molev, Ragoucy and Sorba. We give a
representation of the braid group action on ${\mathfrak D}_n$ in
terms of an adjoint matrix action. We characterize two types of
finite--dimensional Poissonian reductions and give an explicit
expression for the generating function of their central elements.
Finally, we interpret the algebra ${\mathfrak D}_n$  as the Poisson
algebra of monodromy data of a Frobenius manifold in the vicinity of
a non-semisimple point.
\end{abstract}

\tableofcontents

\section{Introduction}

In recent years Teichm\"uller theory has attracted interest from the
mathematical physics comminity due to the manifestation of the
Teichm\"uller space as the Hilbert space for three-dimensional
quantum gravity~\cite{VV}. The Teichm\" uller space possesses its
canonical (Weil--Petersson) Poisson structure, whose symmetry group
is the mapping class group of orientation-preserving homeomorphisms
modulo isotopy.  The algebra of observables is the collection of
length functions of geodesic representatives of homotopy
classes of essential closed curves together with its natural mapping
class group action.

Algebras of geodesic length functions appearing in studies of
Teichm\"uller spaces of hyperbolic Riemann surfaces are closely
related to those appearing in isomonodromy problems. For example,
the Nelson--Regge algebra~\cite{NR},~\cite{NRZ} appearing as the
algebra of geodesic--length--functions on a genus $g$ Riemann
surface with $1$ or $2$ holes~\cite{ChF2},~\cite{ChP}, and its isomorphic algebras
$A_n$ of geodesic functions on a disk with $n$
orbifold points \cite{Ch2}, coincide with the Poisson algebras of
monodromies in Fuchsian systems arising in Frobenius manifold theory
\cite{Ugaglia} and algebras of groupoid of upper triangular matrices
\cite{Bondal}.

This coincidence between algebras of geodesic length functions
appearing in Teichm\"uller theory and   algebras of monodromy data
of isomonodromic systems remained a mystery so far. In this paper we
characterize a natural isomonodromic connection on the punctured
$\mathbb P^1$ (the Chern--Simons connection) whose monodromy group
is given by the Fuchsian group of a disk with $n$
orbifold points. This shows that the $A_n$ algebras coincide with the
Poisson algebras of the monodromy data of a $2\times2$ Fuchsian
system with $n+1$ poles.

We then generalize this correspondence and introduce a new type of
Poisson algebras, whose geometrical origin are algebras of geodesic
functions on a one-sheeted hyperboloid (or topologically an annulus)
with $n$ orbifold points. On the analytical side, we can obtain the
corresponding Fuchsian system starting from an $A_{n+m}$-system and
clashing $m$ regular singularities to produce a new hole. The
corresponding Poisson algebra of monodromy data is now a quadratic
algebra independent on the number $m$ of the clashed poles that can
be interpreted as an  abstract algebra for infinitely many
generators $G_{i,j}^{(k)}$, $i,j=1,\dots,n$, $k\in\mathbb Z_{\geq
0}$ (see theorem \ref{th:main}). We call this algebra the
${\mathfrak D}_n$ algebra. At {\it level $0$,}\/ i.e. for the
generators $G_{i,j}^{(0)}$, $i,j=1,\dots,n$, this algebra restricts
to the Nelson Regge algebra.

We show that the ${\mathfrak D}_n$ algebra is the semiclassical
limit of the twisted $q$-Yangian $Y'_q(\mathfrak{o}_n)$ for the
orthogonal Lie algebra $\mathfrak{o}_n$ \cite{MRS}, or, in other
words, the defining relations of ${\mathfrak D}_n$ algebra are the
semiclassical limit of the well-known reflection equation.

Beside the Poisson structure, another common property of the
algebras of geodesic--length--functions on a Teichm\"uller space are
the braid-group relations, which generate the mapping class group.
The braid group invariants are simultaneously the central elements
of the Poisson algebra, therefore constructing a convenient
representation of the braid group is always helpful in finding the
central elements of the Poisson algebra. Such a representation in
terms of the adjoint matrix action for the $A_n$ algebras was
constructed in \cite{dubrovin} and was used in \cite{Bondal} for
constructing the Poisson invariants of the corresponding algebra. On
the analytic side, the action of the braid group corresponds to the
analytic continuation of the solutions to the isomonodromic problem
\cite{DM}.

In this paper we give a representation of the braid group action on
${\mathfrak D}_n$ in terms of an adjoint matrix action (see
proposition \ref{lem-braid}). Due to topological considerations,
this action is represented by $n$ generators, the generators
$\beta_{i,i+1}$, $i=1,\dots,n-1$, interchanging the $i$-th orbifold
point with the $(i+1)$-th one, and a new generator $\beta_{n,1}$
interchanging the first and the last orbifold points from the other
side of the new hole. This new generator acts in a non trivial way
mixing different  levels.

We characterize two types of finite-dimensional Poissonian
reductions, the so-called {\it level $p$ reductions}\/ and the {\it
$D_n$ reduction,}\/ and give an explicit expression for the
generating function of their central elements.\footnote{We draw the
attention of the reader to the fact that throughout this paper we
deal with two distinct objects: the $\mathfrak D_n$ algebra and the
$D_n$ algebra.}  Let us briefly describe these two reductions from a
geometric point of view. The level $p$ reduction corresponds to
collapsing the newly created hole to an orbifold point of order $p$.
The $D_n$ reduction is the reduction to a finitely generated cubic
algebra produced in \cite{Ch1}, where  the corresponding braid-group
action was also constructed. However, a procedure for finding
central elements (or the braid-group invariants of this algebra) was
lacking in \cite{Ch1}. We fill this gap in this paper.

The quantum braid-group action representation for the $D_n$
algebra was found in \cite{Ch1},~\cite{Ch2}. Since the reduction of
the ${\mathfrak D}_n$ algebra to the $D_n$ algebra can be presented
in the matrix form, it is clear that the very same representation of the
quantum braid group must be simultaneously a representation for the
quantum braid group (or  quantum mapping class group) for the
${\mathfrak D}_n$ algebra as well as for all its $p$-level (quantum)
reductions. Using this insight we show that the subgroup generated
by $\beta_{i,i+1}$ for $i=1,\dots,n-1$ is quantized to the one
acting on the twisted quantized enveloping algebra $U'_q(\mathfrak o_n)$
studied in \cite{MR}, while the action of  $\beta_{n,1}$ (quantized
or not) is new.

The fact that the Nelson--Regge algebra  coincides with the Poisson
algebras of monodromies of Fuchsian systems arising in Frobenius
manifold theory poses the natural question of characterizing the
special class of Frobenius manifolds coming from
Teichm\"uller theory. This is a highly non-trivial problem that we
postpone to subsequent work \cite{ChMa}.  In this paper, we show that
in two special limiting cases of the $A_n$ algebra, that we call
$A_3^\ast$ and $A_4^\ast$ respectively, the Teichm\"uller space
carries the same Frobenius manifold structure as the respective quantum
cohomology rings $H^\ast(\mathbb C\mathbb P^2)$ and $H^\ast(\mathbb
C\mathbb P^3)$.

Finally we interpret our $\mathfrak D_n$ algebra as the Poisson
algebra of the Stokes data of a Frobenius manifold in the vicinity
of a non semi-simple point.

The paper is organized as follows. In Sec.~\ref{se.orbi}, we briefly
recall the combinatorial description of Teichm\"uller spaces of
Riemann surfaces with holes and with  orbifold points and describe
the Goldman bracket \cite{Gold} of geodesic functions. The special
case of the Nelson--Regge algebras are considered in
Subsec.~\ref{sub:An} and the one of the  $D_n$ algebras in
Subsec.~\ref{sub:dn}. In Sec.~\ref{se:schl}, we consider the
isomonodromic deformations of a Fuchsian system with $n+1$ poles and
introduce its monodromy data. The Poisson brackets on the set of
these monodromy data are the Korotkin--Samtleben brackets \cite{KS}
described in Sec.~\ref{se:KS}. In Sec.~\ref{se:clash}, we introduce
the procedure of pole clashing and use it to generate a new hole.
The new (infinite-dimensional) Poisson algebras ${\mathfrak D}_n$ of
the monodromy data for the one-sheeted hyperboloid with $n$ orbifold
points are introduced in Sec.~\ref{se:newDn} where we also prove
that ${\mathfrak D}_n$ algebra is the semiclassical limit of the
twisted $q$-Yangian $Y'_q(\mathfrak{o}_n)$ for the orthogonal Lie
algebra $\mathfrak{o}_n$ (Subsec. \ref{subse:yangian}). In Subsec.
\ref{subse:braid} we construct the braid-group representation in
matrix form and in Subsec. \ref{subse:qbraid} we quantize it. We
study various reductions of these algebras in Sec.~\ref{se:central},
where we introduce the $p$-level reductions, the
algebras ${\mathfrak D}^{(p)}_n$,
which enjoy the same braid-group representation in the
adjoint matrix form as the general algebra, enabling us to evaluate
their central elements. We then turn to the case of the
$D_n$-algebra and show that it can be obtained by a special
reduction (based on the skein relations) from the ``ambient''
${\mathfrak D}_n$ algebra and with the same braid-group
representation as above. This fact enables us to construct $n$
central elements of the $D_n$ algebra. We prove the algebraic
independence of these central elements and that  the algebra $D_n$
admits in general no more than $n$ algebraically independent central
elements.

The link with the Frobenius manifold theory and the quantum
cohomology of projective spaces is carried out in Sec.~\ref{se:FM}.

Finally, in Appendix~A, we provide the description of monodromy data
for a general $n\times n$ Fuchsian system. In Appendix~B, we present
the proof of the Jacobi identities for the brackets of the
${\mathfrak D}_n$ algebra brackets, and in~Appendix~C, we present
the proof of the algebraic independence for the $\left[np/2\right]$
central elements of the algebra ${\mathfrak D}^{(p)}_n$.

\vskip 2mm \noindent{\bf Acknowledgements.} The authors are grateful
to Boris Dubrovin, who put them into contact and gave them many
helpful suggestions and to A. Molev for his insights on the theory
of Yangians. We would like to thank also Alexei Bondal, Volodya
Fock, Davide Guzzetti, Nigel Hitchin, Dima Korotkin, Maxim Nazarov,
Bob Penner, Vasilisa Shramchenko, Alexander Strohmaier and Sasha
Veselov for many enlighting conversations. This research  was
supported by the EPSRC ARF EP/D071895/1 and RA EP/F03265X/1, by the
RFBR grants 08-01-00501 and 09-01-92433-CE, by the Grant for Support
for the Scientific Schools 195.2008.1, by the Program Mathematical
Methods for Nonlinear Dynamics, by the Marie Curie training network
ENIGMA, by the ESF network MISGAM and by the Manchester Institute
for Mathematical Sciences.

\section{Orbifold Riemann surfaces}\label{se.orbi}

The graph description of the Teichm\"uller theory of surfaces with
orbifold points was
proposed in \cite{Ch1},~\cite{Ch1a}.\footnote{In \cite{Ch1}, it was developed for the
bordered Riemann surfaces, the interpretation in terms of the orbifold Riemann surfaces was given
in \cite{Ch1a}, but all the algebraic formulas in \cite{Ch1} are identical for the both geometrical interpretations.}
This theory is formulated in terms of
hyperbolic geometry by introducing new parameters (the number of
orbifold points on a Riemann surface with holes). Let us denote by
$\Sigma_{g,s,n}$ a Riemann surface of genus $g$ with $s$ holes and
$n$ orbifold points of order two. By the Poincar\'e uniformization
theorem
$$
\Sigma_{g,s,n}\sim {\mathbb H}\slash \Delta_{g,s,n},
$$
where
$$
 \Delta_{g,s,n}=\langle\gamma_1\dots,\gamma_{2g+s+n-1}\rangle,\qquad
 \gamma_1\dots,\gamma_{2g+s+n-1}\in{\mathbb P}SL(2,{\mathbb R})
$$
is a Fuchsian group,  the fundamental group of the
surface~$\Sigma_{g,s,n}$. In particular for orbifold Riemann
surfaces, the Fuchsian group $\Delta_{g,s,n}$ is {\it almost
hyperbolic,}\/ i.e. all its elements are either hyperbolic
(when all the holes have nonzero perimeters;
parabolic elements are allowed when a hole degenerates into a puncture) or have
trace equal to zero.

Let us remind the Thurston shear-coordinate description of the
Teichm\"uller spaces of Riemann surfaces with holes and, possibly, orbifold points (see \cite{Ch1a}).
The main idea \cite{Fock1} is to decompose each hyperbolic matrix
$\gamma\in \Delta_{g,s,n}$ as a product of the form
\begin{equation}\label{eq:decomp}
\gamma= (-1)^K R^{k_{i_p}} X_{Z_{i_p}} \dots R^{k_{i_1}} X_{Z_{i_1}},\qquad i_j\in I,\quad
k_{i_j}=1,2,\quad K:=\sum_{j=1}^p k_{i_j}
\end{equation}
where $I$ is a set of integer indices and the matrices $R,\, L$ and $X_{Z_i}$ are defined as follows:
\begin{eqnarray}\nn\label{eq:generators}
&&
R:=\left(\begin{array}{cc}1&1\\-1&0\\
\end{array}\right), \qquad
L=-R^2:=\left(\begin{array}{cc}0&1\\-1&-1\\
\end{array}\right), \\
&&
X_{Z_i}:=\left(\begin{array}{cc}0&-\exp\left({\frac{Z_i}{2}}\right)\\
\exp\left(-{\frac{Z_i}{2}}\right)&0\\
\nn\end{array}\right),
\end{eqnarray}
and to decompose each traceless element as
\begin{equation}
\label{eq:decomp1}
\gamma_0=\gamma^{-1} F \gamma,
\end{equation}
where $\gamma$ is decomposed as in (\ref{eq:decomp}) and
$$
F=\left(\begin{array}{cc}0&1\\-1&0\\
\end{array}\right).
$$
The decomposition of each element in the Fuchsian group
$\Delta_{g,s,n}$ is obtained by looking at the closed geodesic
corresponding to it in the fat--graph associated to
$\Sigma_{g,s,n}$.

Let us briefly recall how to associate a fat-graph to a Riemann
surface with holes but without orbifold points~\cite{Fock1}~\cite{Fock2}.
In this case, one considers
a spine $\Gamma_{g,s}$ corresponding to the Riemann surface
$\Sigma_{g,s}$ with $g$ handles and $s$ boundary components (holes).
The spine, or fat--graph $\Gamma_{g,s}$ is a connected graph that
can be drawn without self-intersections on $\Sigma_{g,s}$, it has all
vertices of valence three, it has a prescribed cyclic ordering of
labeled edges entering each vertex, and it is a maximal graph in the
sense that its complement on the Riemann surface
is a set of disjoint polygons (faces), each polygon
containing exactly one hole (and becoming simply connected after gluing
this hole). Since a graph must have at least one face, only Riemann
surfaces with at least one hole, $s>0$, can be described in this way.
The hyperbolicity condition also implies $2g-2+s>0$.

In the case where no orbifold points are present, the Fuchsian group
$\Delta_{g,s}$ is strictly hyperbolic if all the holes have nonzero
perimeters, and only the elements that correspond to holes degenerated
into punctures are parabolic ones. The decomposition (\ref{eq:decomp}) can
be obtained by establishing a one-to-one correspondence between
elements of the Fuchsian group and closed paths in the spine
starting and terminating at the same directed edge. Each  time the
path $A$ corresponding to the element $\gamma_A$ (or, equivalently,
to its invariant closed geodesic)  passes through the
$\alpha$th edge, an edge--matrix  $X_{Z_\alpha}$ with the real
coordinate $Z_\alpha$ appears in the decomposition of $\gamma$. At
the end of the edge, the path can either turn right or left, and a
matrix $R$ or $L$ respectively appears in the decomposition
\cite{Fock1}.

The introduction of orbifold points is achieved by considering {\em
new types of graphs} with pending vertices \cite{Ch1}. Then, if a
geodesic line comes to a pending vertex, it undergoes an {\em
inversion}, which corresponds to inserting the {\em inversion
matrix} $F$, into the corresponding string of $2\times2$-matrices.
The edge terminating at a pending vertex is called {\em pending}
edge.

All possible paths in the spine (graph) that are closed and may
experience an arbitrary number of inversions at pending vertices of
the graph must be taken into account.

The  algebras of geodesic length functions were constructed in \cite{Ch1}
by  postulating the Poisson relations on the level of the shear
coordinates $X_\alpha$ of the Teichm\"uller space:
\begin{equation}
\label{eq:Poisson}
\bigl\{f({\mathbf X}),g({\mathbf X})\bigr\}=\sum_{{\hbox{\small 3-valent} \atop \hbox{\small vertices $\alpha=1$} }}^{4g+2s+n-4}
\,\sum_{i=1}^{3\!\!\mod 3}
\left(\frac{\partial f}{\partial X_{\alpha_i}} \frac{\partial g}{\partial X_{\alpha_{i+1}}}
- \frac{\partial g}{\partial X_{\alpha_i}} \frac{\partial f}{\partial X_{\alpha_{i+1}}}\right),
\end{equation}
where the sum ranges all the three-valent vertices of a graph and
$\alpha_i$ are the labels of the cyclically (counterclockwise)
ordered ($\alpha_4\equiv \alpha_1 $) edges incident to the vertex
with the label $\alpha$. This bracket gives rise to the {\it Goldman
bracket} on the space of geodesic length functions \cite{Gold}.

We recall an important relation valid in $\mathbb P SL(2)$:
\begin{equation}\label{eq:skein}
 \Tr \gamma_A \Tr  \gamma_B=\Tr ( \gamma_A  \gamma_B ) + \Tr ( \gamma_A  \gamma_B^{-1}) .
\end{equation}
This relation corresponds to resolving the crossing between the two corresponding geodesics $A$ and $B$ as in Fig. \ref{fi:skein-cl}
and it referred to as {\it skein relation.}

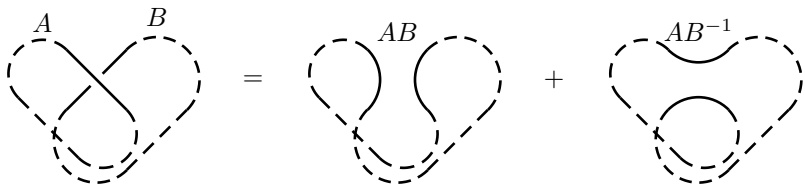
\begin{figure}[h]
{\psset{unit=0.4}
\begin{pspicture}(-12,-4)(12,4)
\newcommand{\LOOPS}{%
\psarc[linestyle=dashed, linewidth=1pt](2,0){1.42}{-45}{135}
\pcline[linestyle=dashed, linewidth=1pt](3,-1)(1,-3)
\psarc[linestyle=dashed, linewidth=1pt](0,-2){1.42}{135}{315}
\psarc[linestyle=dashed, linewidth=1pt](0.2,-1.8){1.13}{-135}{45}
\pcline[linestyle=dashed, linewidth=1pt](-0.6,-2.6)(-2.6,-0.6)
\psarc[linestyle=dashed, linewidth=1pt](-1.8,0.2){1.13}{45}{225}
}
\rput(-10,0){\LOOPS}
\rput(-10,0){
\pcline[linewidth=1pt](-1,1)(1,-1)
\pcline[linewidth=1pt](-1,-1)(-0.2,-0.2)
\pcline[linewidth=1pt](1,1)(0.2,0.2)
\rput(-1.8,1.6){\makebox(0,0)[cb]{$A$}}
\rput(2,1.8){\makebox(0,0)[cb]{$B$}}
}
\rput(-4.8,0){\makebox(0,0)[cc]{$=$}}
\rput(0,0){\LOOPS}
\rput(0,0){
\psarc[linewidth=1pt](-2,0){1.42}{-45}{45}
\psarc[linewidth=1pt](2,0){1.42}{135}{225}
\rput(0,1.3){\makebox(0,0)[cb]{$AB$}}
}
\rput(5.2,0){\makebox(0,0)[cc]{$+$}}
\rput(10,0){\LOOPS}
\rput(10,0){
\psarc[linewidth=1pt](0,2){1.42}{225}{315}
\psarc[linewidth=1pt](0,-2){1.42}{45}{135}
\rput(0,1.3){\makebox(0,0)[cb]{$A B^{-1}$}}
}
\end{pspicture}
}
\caption{\small The classical skein relation.}
\label{fi:skein-cl}
\end{figure}

\subsection{$A_n$ algebra}\label{sub:An}

The simplest case of orbifold Riemann surface is a Poincar\'e disk with $n\geq3$
orbifold points in the interior; we denote it by  $\Sigma_{0,1,n}$. In this case, the
fat-graph $\Gamma_{0,1,n}$ is a tree-like graph depicted in Fig.~\ref{fi:An} for
$n=3,4,\dots$. We enumerate the $n$ dot-vertices counterclockwise,
$i,j=1,\dots, n$, and consider the algebra of all geodesic
functions.\footnote{Note that in the cluster algebra terminology (see
\cite{FST}) these algebras were denoted by $A_{n-2}$.}

\subsubsection{Poisson relations for $A_n$ algebra}

Let $G_{i,j}$ with $i<j$
denote the geodesic function corresponding to the geodesic line that
encircles exactly two pending vertices with the indices $i$ and
$j$. Examples for $n=3$ and $n=4$ are in the figure \ref{fi:An}.
It turns out that these geodesic functions suffice for closing the
Poisson algebra:
\begin{eqnarray}\label{eq:NR}
&&
\left\{G_{i,k},G_{j,l}\right\}=0,\quad\hbox{for}\,\,i<k<j<l,\quad\hbox{and for}\,\, i<j<l<k, \nn\\
&&
\left\{G_{i,k},G_{j,l}\right\}=2\left(G_{i,j}G_{k,l}-G_{i,l}G_{k,j}\right),\quad\hbox{for}\, \, i<j<k<l,\nn\\
&&
\left\{G_{i,k},G_{k,l}\right\}=G_{i,k}G_{k,l}-2G_{i,l},\quad\hbox{for}\,\,  i<k<l,\\
&&
\left\{G_{i,k},G_{j,k}\right\}=-\left(G_{i,k}G_{j,k}-2G_{i,j}\right),\quad\hbox{for}\, \, i< j<k,\nn\\
&&
\left\{G_{i,k},G_{i,l}\right\}=-\left(G_{i,k}G_{i,l}-2G_{k,l}\right),\quad\hbox{for}\, \, i<k<l.\nn
\end{eqnarray}
Note that the left-hand side is doubled in this case as compared to Nelson--Regge algebras
recalled in~\cite{ChP}.

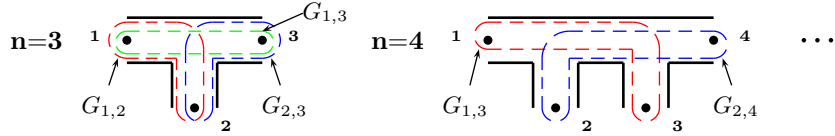
\begin{figure}[h]
\begin{center}
{\psset{unit=0.6}
\begin{pspicture}(-4,-3)(8,1)
\rput(-6,0){\makebox(0,0){$\mathbf{n{=}3}$}}
\pcline[linewidth=1pt](-4,0.5)(-1,0.5)
\pcline[linewidth=1pt](-4,-0.5)(-3,-0.5)
\pcline[linewidth=1pt](-2,-0.5)(-1,-0.5)
\pcline[linewidth=1pt](-3,-0.5)(-3,-1.5)
\pcline[linewidth=1pt](-2,-0.5)(-2,-1.5)
\pscircle*(-4,0){0.1}
\pscircle*(-1,0){0.1}
\pscircle*(-2.5,-1.5){0.1}
\psarc[linecolor=red, linestyle=dashed, linewidth=0.5pt](-4,0){.4}{90}{270}
\psarc[linecolor=red, linestyle=dashed, linewidth=0.5pt](-2.6,-1.5){.3}{180}{0}
\pcline[linecolor=red, linestyle=dashed, linewidth=0.5pt](-2.9,-1.5)(-2.9,-0.4)
\pcline[linecolor=red, linestyle=dashed, linewidth=0.5pt](-2.9,-0.4)(-4,-0.4)
\pcline[linecolor=red, linestyle=dashed, linewidth=0.5pt](-2.3,-1.5)(-2.3,-0.2)
\pcline[linecolor=red, linestyle=dashed, linewidth=0.5pt](-2.9,0.4)(-4,0.4)
\psarc[linecolor=red, linestyle=dashed, linewidth=0.5pt](-2.9,-0.2){.6}{0}{90}
\psarc[linecolor=blue, linestyle=dashed, linewidth=0.5pt](-1,0){.4}{-90}{90}
\psarc[linecolor=blue, linestyle=dashed, linewidth=0.5pt](-2.4,-1.5){.3}{180}{0}
\pcline[linecolor=blue, linestyle=dashed, linewidth=0.5pt](-2.1,-1.5)(-2.1,-0.4)
\pcline[linecolor=blue, linestyle=dashed, linewidth=0.5pt](-2.1,-0.4)(-1,-0.4)
\pcline[linecolor=blue, linestyle=dashed, linewidth=0.5pt](-2.7,-1.5)(-2.7,-0.2)
\pcline[linecolor=blue, linestyle=dashed, linewidth=0.5pt](-2.1,0.4)(-1,0.4)
\psarc[linecolor=blue, linestyle=dashed, linewidth=0.5pt](-2.1,-.2){.6}{90}{180}
\psarc[linecolor=green, linestyle=dashed, linewidth=0.5pt](-4,-0.05){.25}{90}{270}
\psarc[linecolor=green, linestyle=dashed, linewidth=0.5pt](-1,-.05){.25}{-90}{90}
\pcline[linecolor=green, linestyle=dashed, linewidth=0.5pt](-4,.2)(-1,.2)
\pcline[linecolor=green, linestyle=dashed, linewidth=0.5pt](-4,-0.3)(-1,-0.3)
\rput(-4.7,0.1){\makebox(0,0){\tiny$\mathbf1$}}
\rput(-1.8,-1.9){\makebox(0,0){\tiny$\mathbf2$}}
\rput(-0.3,0.1){\makebox(0,0){\tiny$\mathbf3$}}
\pcline[linewidth=0.5pt]{->}(-4.4,-1)(-4.2,-0.4)
\rput(-4.5,-1.5){\makebox(0,0){\small$G_{1,2}$}}
\pcline[linewidth=0.5pt]{->}(-0.6,-1)(-0.8,-0.4)
\rput(-.5,-1.5){\makebox(0,0){\small$G_{2,3}$}}
\pcline[linewidth=0.5pt]{->}(-0.1,0.6)(-1,0.2)
\rput(0.4,.6){\makebox(0,0){\small$G_{1,3}$}}
\rput(2,0){\makebox(0,0){$\mathbf{n{=}4}$}}
\pcline[linewidth=1pt](4,0.5)(9,0.5)
\pcline[linewidth=1pt](4,-0.5)(5,-0.5)
\pcline[linewidth=1pt](6,-0.5)(7,-0.5)
\pcline[linewidth=1pt](8,-0.5)(9,-0.5)
\pcline[linewidth=1pt](5,-0.5)(5,-1.5)
\pcline[linewidth=1pt](6,-0.5)(6,-1.5)
\pcline[linewidth=1pt](7,-0.5)(7,-1.5)
\pcline[linewidth=1pt](8,-0.5)(8,-1.5)
\pscircle*(4,0){0.1}
\pscircle*(9,0){0.1}
\pscircle*(5.5,-1.5){0.1}
\pscircle*(7.5,-1.5){0.1}
\psarc[linecolor=red, linestyle=dashed, linewidth=0.5pt](4,0.1){.3}{90}{270}
\psarc[linecolor=red, linestyle=dashed, linewidth=0.5pt](7.5,-1.5){.3}{180}{0}
\pcline[linecolor=red, linestyle=dashed, linewidth=0.5pt](7.2,-1.5)(7.2,-0.2)
\pcline[linecolor=red, linestyle=dashed, linewidth=0.5pt](7.8,-1.5)(7.8,-0.2)
\pcline[linecolor=red, linestyle=dashed, linewidth=0.5pt](4,0.4)(7.2,0.4)
\pcline[linecolor=red, linestyle=dashed, linewidth=0.5pt](4,-0.2)(7.2,-0.2)
\psarc[linecolor=red, linestyle=dashed, linewidth=0.5pt](7.2,-0.2){.6}{0}{90}
\psarc[linecolor=blue, linestyle=dashed, linewidth=0.5pt](9,-0.1){.3}{-90}{90}
\psarc[linecolor=blue, linestyle=dashed, linewidth=0.5pt](5.5,-1.5){.3}{180}{0}
\pcline[linecolor=blue, linestyle=dashed, linewidth=0.5pt](5.2,-1.5)(5.2,-0.4)
\pcline[linecolor=blue, linestyle=dashed, linewidth=0.5pt](5.8,-1.5)(5.8,-0.4)
\pcline[linecolor=blue, linestyle=dashed, linewidth=0.5pt](5.8,-0.4)(9,-0.4)
\pcline[linecolor=blue, linestyle=dashed, linewidth=0.5pt](5.8,0.2)(9,0.2)
\psarc[linecolor=blue, linestyle=dashed, linewidth=0.5pt](5.8,-.4){.6}{90}{180}
\rput(3.3,0.1){\makebox(0,0){\tiny$\mathbf1$}}
\rput(6.2,-1.9){\makebox(0,0){\tiny$\mathbf2$}}
\rput(8.2,-1.9){\makebox(0,0){\tiny$\mathbf3$}}
\rput(9.7,0.1){\makebox(0,0){\tiny$\mathbf4$}}
\pcline[linewidth=0.5pt]{->}(3.6,-1)(3.8,-0.2)
\rput(3.5,-1.5){\makebox(0,0){\small$G_{1,3}$}}
\pcline[linewidth=0.5pt]{->}(9.4,-1)(9.2,-0.4)
\rput(9.5,-1.5){\makebox(0,0){\small$G_{2,4}$}}
\pscircle*(11,0){0.05}
\pscircle*(11.3,0){0.05}
\pscircle*(11.6,0){0.05}
\end{pspicture}
}
\caption{\small Generating graphs for $A_n$ algebras for $n=3,4,\dots$. We indicate character
geodesics whose geodesic functions $G_{ij}$ enter bases of the corresponding algebras.}
\label{fi:An}\end{center}
\end{figure}


In this paper we consider a basis $\gamma_1,\dots,\gamma_n$ in the Fuchsian group $\Delta_{0,1,n}$ such that
$$
-\Tr(\gamma_i\gamma_j)=G_{i,j}.
$$
(The sign convention is such that when we interpret $G_{i,j}$ as being the geodesic functions
related to lengths $\ell_{i,j}$ of closed geodesics, we have $G_{i,j}=2\cosh(\ell_{i,j}/2)\ge2$.)  In this case, for convenience
we let $Z_i$ denote the coordinates of pending edges and $Y_j$ all
other coordinates. In the case where we do not distinguish
between pending and internal edges, we preserve the notation $X_\alpha$ for all the
coordinates. This basis is given by the following (we write it in $SL(2,\mathbb R)$):
\begin{eqnarray}
&{}&
\gamma_1=F,\nn\\
&{}&
\gamma_2 =- X_{Z_1} L X_{Z_2} F X_{Z_2} R X_{Z_1}\nn\\
&&
\gamma_3= -X_{Z_1} R X_{Y_1} L X_{Z_3} F X_{Z_3} R X_{Y_1} L X_{Z_1}   \nn\\
&{}&
\dots\nn \\
&{}&
\gamma_i =- X_{Z_1} R X_{Y_1} R X_{Y_2}  \dots R X_{Y_{i-2}} L X_{Z_i} F X_{Z_i}  R X_{Y_{i-2}} L  \dots
X_{Y_1} L X_{Z_1},\label{eq:basisn}\\
&{}&
\dots \nn \\
&{}&
\gamma_{n-1} =- X_{Z_1} R X_{Y_1} R X_{Y_2}  \dots R X_{Y_{n-3}} L X_{Z_{n-1}} F X_{Z_{n-1}}  R X_{Y_{n-3}} L  \dots
X_{Y_1} L X_{Z_1},\nn \\
&{}&
\gamma_n =-  X_{Z_1} R X_{Y_1} R X_{Y_2}  \dots R X_{Y_{n-2}} R X_{Z_n} F X_{Z_n}  R X_{Y_{n-2}} L  \dots X_{Y_1} L X_{Z_1},\nn
\end{eqnarray}
Observe that $\Tr\gamma_i=0$, $i=1,\dots,n$. It is not hard to check that the matrix
$$
\gamma_\infty:=(\gamma_1 \gamma_{2} \dots \gamma_n)^{-1}
$$
has eigenvalues $(-1)^{n-1} e^{\pm P/2}$, where $P$ is the length of the perimeter around the hole:
\begin{equation}\label{eq:per}
P = 2\sum_{i=1}^n Z_i+2\sum_{j=1}^{n-3} Y_j.
\end{equation}

\subsubsection{Mapping-class group action on $A_n$}

Observe that there is a degree of arbitrariness in the choice of the
fat graph $\Gamma_{g,s,n}$ associated to a Riemann surface
$\Sigma_{g,s,n}$. This arbitrariness is described by the Whitehead
moves \cite{Penn1}
and their generalization to the case of pending edges \cite{Ch1}.
Using these moves, or {\it flip morphisms,}\/ one can establish a
morphism between any two algebras corresponding to surfaces of the
same genus, same number of boundary components, and same
number of orbifold points. If, after a series of
morphisms,  a graph of the same combinatorial type as the
initial one (disregarding marking of edges) is obtained, then this morphism is associated to a {\em
mapping class group} operation, therefore passing
from the groupoid of morphisms to the mapping class group.

For the $A_n$ algebra, the action of the mapping class group
corresponds to the following action of the braid group \cite{Ch1}:
let us construct the upper-triangular matrix
\begin{gather}
\label{A-matrix}
{\mathcal A}=\left(\begin{array}{ccccc}
                     1 & G_{1,2} & G_{1,3} & \dots & G_{1,n} \\
                     0 & 1 & G_{2,3} & \dots & G_{2,n} \\
                     0 & 0 & 1 & \ddots & \vdots \\
                     \vdots & \vdots & \ddots & \ddots & G_{n-1,n} \\
                     0 & 0 & \dots & 0 & 1 \\
                   \end{array}
\right)
\end{gather}
associating the entries $G_{i,j}$ with the geodesic length functions. The action of the braid group element $\beta_{i,i+1}$ is defined as:
\begin{equation}\label{eq:braidch}
\beta_{i,i+1}{\mathcal A}={\ttilde{\mathcal A}},\qquad \hbox{where} \ \
\left\{
\begin{array}{ll}
  {\ttilde G}_{i+1,j}=G_{i,j}, & j>i+1,\\
  {\ttilde G}_{j,i+1}=G_{j,i}, & j<i, \\
  {\ttilde G}_{i,j}=G_{i,j}G_{i,i+1}-G_{i+1,j}, & j>i+1, \\
  {\ttilde G}_{j,i}=G_{j,i}G_{i,i+1}-G_{j,i+1}, & j<i, \\
  {\ttilde G}_{i,i+1}=G_{i,i+1}. &  \\
\end{array}%
\right.
\end{equation}
A very convenient way to present this transformation is by introducing the special matrices
$B_{i,i+1}$ of the block-diagonal form (see \cite{dubrovin})
\begin{equation}
\label{Bii+1}
B_{i,i+1}=\begin{array}{c}
            \vdots \\
            i \\
            i+1 \\
            \vdots \\
          \end{array}
          \left(
          \begin{array}{cccccccc}
            1 &  &  &  &  &  &  &  \\
             & \ddots &  &  &  &  &  &  \\
             &  & 1 &  &  &  &  &  \\
             &  &  &   G_{i,i+1} & -1 &  & &  \\
            &  &  & 1 & 0 &  &  &  \\
             &  &  &  &  & 1 &  &  \\
             &  &  &  &  &  & \ddots &  \\
             &  &  &  &  &  &  & 1 \\
          \end{array}
          \right).
\end{equation}
Then, the action of the braid group generator $\beta_{i,i+1}$ on ${\mathcal A}$ acquires merely
a matrix product form:
\begin{gather}
\label{BAB}
\beta_{i,i+1}{\mathcal A}=B_{i,i+1}{\mathcal A}B^{T}_{i,i+1}
\end{gather}
where $B^{T}_{i,i+1}$ denotes the matrix transposed to $B_{i,i+1}$.
In this setting it is easy to prove the braid group relations:
$$
\beta_{i-1,i}\beta_{i,i+1}\beta_{i-1,i}=\beta_{i,i+1}\beta_{i-1,i}\beta_{i,i+1},\qquad 2\le i\le n-1,
$$
and also the extra relation \cite{dubrovin}:
\begin{equation}\label{eq:secondb}
\bigl(\beta_{n-1,n}\beta_{n-2,n-1}\dots\beta_{2,3} \beta_{1,2}\bigr)^n=\hbox{\rm Id}.
\end{equation}

\begin{remark}
Observe that upper triangular matrices of the form (\ref{A-matrix})
can be interpreted as Stokes matrices of certain linear system of
ordinary differential equations appearing in the theory of Frobenius
Manifolds \cite{dubrovin}. A study of the special class of Frobenius
manifolds arising in Teichm\"uller theory is in progress
\cite{ChMa}. We show some preliminary results in Sec.~\ref{se:FM}.
\end{remark}

\subsection{$D_n$ algebra}\label{sub:dn}

The simplest case of orbifold Riemann surface with two holes is an
annulus with $n\geq 2$ marked points,
$\Sigma_{0,2,n}$. The fat graph $\Gamma_{0,2,n}$ with $n=4$ is shown in
Fig.~\ref{fi:Dn}.

\subsubsection{Poisson relations for the $D_n$ algebra}

Again, the  algebras of geodesic functions were constructed in
\cite{Ch1} by  postulating the Poisson relations on the level of the
shear coordinates of the Teichm\"uller space. To close this Poisson
algebra more geodesic functions are needed than in the case of $\mathcal A_n$,
they are: ${\hhat G}_{i,i}$, the
geodesic containing the $i$-th pending vertex and the hole, and for
each $i,j=1,\dots, n$ two geodesics containing the pending vertices
$i$ and $j$: ${\hhat G}_{i,j}$ and ${\hhat G}_{j,i} $.  Here, the order of subscripts
indicates the direction of encompassing the hole (the second
boundary component of the annulus), see Fig.~\ref{fi:Dn}. Obviously, ${\hhat G}_{i,j}$ with $1\le i<j\le n$
constitute one (among $n$ possible) $A_n$-subalgebras of the $D_n$ algebra.
The total number of generators of the $D_n$ algebra is therefore $n^2$.
In this paper, we indicate the
geodesic functions from this set by the hat symbol to distinguish them
from the level $k$ geodesic functions which will be introduced in Section \ref{se:newDn}.
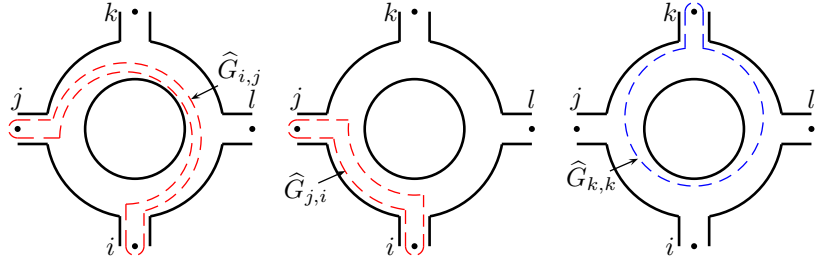
\begin{figure}[tb]
{\psset{unit=0.4}
\begin{pspicture}(-6,-4)(6,4)
\psarc[linewidth=1pt](0,0){2.95}{-80}{-10}
\psarc[linewidth=1pt](0,0){2.95}{10}{80}
\psarc[linewidth=1pt](0,0){2.95}{100}{170}
\psarc[linewidth=1pt](0,0){2.95}{190}{260}
\pscircle[linewidth=1pt](0,0){1.7}
\pcline[linewidth=1pt](-0.5,-2.9)(-0.5,-3.9)
\pcline[linewidth=1pt](0.5,-2.9)(0.5,-3.9)
\pcline[linewidth=1pt](-0.5,2.9)(-0.5,3.9)
\pcline[linewidth=1pt](0.5,2.9)(0.5,3.9)
\pcline[linewidth=1pt](-2.9,-0.5)(-3.9,-0.5)
\pcline[linewidth=1pt](-2.9,0.5)(-3.9,0.5)
\pcline[linewidth=1pt](2.9,-0.5)(3.9,-0.5)
\pcline[linewidth=1pt](2.9,0.5)(3.9,0.5)
\psarc[linecolor=red, linestyle=dashed, linewidth=0.5pt](0,-3.9){.3}{180}{360}
\pscircle*[linewidth=0.5pt](0,-3.9){.1}
\pcline[linecolor=red, linestyle=dashed, linewidth=0.5pt](-0.3,-3.9)(-0.3,-2.5)
\pcline[linecolor=red, linestyle=dashed, linewidth=0.5pt](0.3,-3.9)(0.3,-2.8)
\psarc[linecolor=red, linestyle=dashed, linewidth=0.5pt](-0.3,-0.3){2.2}{-90}{180}
\psarc[linecolor=red, linestyle=dashed, linewidth=0.5pt](-0.3,-0.3){2.5}{-75}{165}
%
\pscircle*[linewidth=0.5pt](0,3.9){.1}
\psarc[linecolor=red, linestyle=dashed, linewidth=0.5pt](-3.9,0){.3}{90}{270}
\pscircle*[linewidth=0.5pt](-3.9,0){.1}
\pcline[linecolor=red, linestyle=dashed, linewidth=0.5pt](-3.9,0.3)(-2.8,0.3)
\pcline[linecolor=red, linestyle=dashed, linewidth=0.5pt](-3.9,-0.3)(-2.5,-0.3)
\pscircle*[linewidth=0.5pt](3.9,0){.1}
\rput(3.5,2){\makebox(0,0){${\hhat G}_{i,j}$}}
\pcline[linewidth=0.5pt]{->}(2.76,1.44)(1.86,0.99)
\rput(-3.9,1){\makebox(0,0){$j$}}
\rput(-0.8,3.9){\makebox(0,0){$k$}}
\rput(3.9,1){\makebox(0,0){$l$}}
\rput(-0.8,-3.9){\makebox(0,0){$i$}}
\end{pspicture}
\begin{pspicture}(-3,-4)(6,0)
\psarc[linewidth=1pt](0,0){2.95}{-80}{-10}
\psarc[linewidth=1pt](0,0){2.95}{10}{80}
\psarc[linewidth=1pt](0,0){2.95}{100}{170}
\psarc[linewidth=1pt](0,0){2.95}{190}{260}
\pscircle[linewidth=1pt](0,0){1.7}
\pcline[linewidth=1pt](-0.5,-2.9)(-0.5,-3.9)
\pcline[linewidth=1pt](0.5,-2.9)(0.5,-3.9)
\pcline[linewidth=1pt](-0.5,2.9)(-0.5,3.9)
\pcline[linewidth=1pt](0.5,2.9)(0.5,3.9)
\pcline[linewidth=1pt](-2.9,-0.5)(-3.9,-0.5)
\pcline[linewidth=1pt](-2.9,0.5)(-3.9,0.5)
\pcline[linewidth=1pt](2.9,-0.5)(3.9,-0.5)
\pcline[linewidth=1pt](2.9,0.5)(3.9,0.5)
\psarc[linecolor=red, linestyle=dashed, linewidth=0.5pt](0,-3.9){.3}{180}{360}
\pscircle*[linewidth=0.5pt](0,-3.9){.1}
\pcline[linecolor=red, linestyle=dashed, linewidth=0.5pt](-0.3,-3.9)(-0.3,-2.7)
\pcline[linecolor=red, linestyle=dashed, linewidth=0.5pt](0.3,-3.9)(0.3,-2.2)
\psarc[linecolor=red, linestyle=dashed, linewidth=0.5pt](0,0){2.2}{175}{275}
\psarc[linecolor=red, linestyle=dashed, linewidth=0.5pt](0,0){2.7}{187}{263}
\pscircle*[linewidth=0.5pt](0,3.9){.1}
\psarc[linecolor=red, linestyle=dashed, linewidth=0.5pt](-3.9,0){.3}{90}{270}
\pscircle*[linewidth=0.5pt](-3.9,0){.1}
\pcline[linecolor=red, linestyle=dashed, linewidth=0.5pt](-3.9,0.3)(-2.2,0.3)
\pcline[linecolor=red, linestyle=dashed, linewidth=0.5pt](-3.9,-0.3)(-2.7,-0.3)
\pscircle*[linewidth=0.5pt](3.9,0){.1}
\rput(-3.6,-2){\makebox(0,0){${\hhat G}_{j,i}$}}
\pcline[linewidth=0.5pt]{->}(-3.2,-1.8)(-2.3,-1.4)
\rput(-3.9,1){\makebox(0,0){$j$}}
\rput(-0.8,3.9){\makebox(0,0){$k$}}
\rput(3.9,1){\makebox(0,0){$l$}}
\rput(-0.8,-3.9){\makebox(0,0){$i$}}
\end{pspicture}
\begin{pspicture}(-3,-4)(6,0)
\psarc[linewidth=1pt](0,0){2.95}{-80}{-10}
\psarc[linewidth=1pt](0,0){2.95}{10}{80}
\psarc[linewidth=1pt](0,0){2.95}{100}{170}
\psarc[linewidth=1pt](0,0){2.95}{190}{260}
\pscircle[linewidth=1pt](0,0){1.7}
\pcline[linewidth=1pt](-0.5,-2.9)(-0.5,-3.9)
\pcline[linewidth=1pt](0.5,-2.9)(0.5,-3.9)
\pcline[linewidth=1pt](-0.5,2.9)(-0.5,3.9)
\pcline[linewidth=1pt](0.5,2.9)(0.5,3.9)
\pcline[linewidth=1pt](-2.9,-0.5)(-3.9,-0.5)
\pcline[linewidth=1pt](-2.9,0.5)(-3.9,0.5)
\pcline[linewidth=1pt](2.9,-0.5)(3.9,-0.5)
\pcline[linewidth=1pt](2.9,0.5)(3.9,0.5)
%
\pscircle*[linewidth=0.5pt](0,-3.9){.1}
%
\psarc[linecolor=blue, linestyle=dashed, linewidth=0.5pt](0,3.9){.3}{0}{180}
\pscircle*[linewidth=0.5pt](0,3.9){.1}
\pcline[linecolor=blue, linestyle=dashed, linewidth=0.5pt](-0.3,3.9)(-0.3,2.7)
\pcline[linecolor=blue, linestyle=dashed, linewidth=0.5pt](0.3,3.9)(0.3,2.7)
\psarc[linecolor=blue, linestyle=dashed, linewidth=0.5pt](0,0.4){2.3}{97}{83}
\pscircle*[linewidth=0.5pt](-3.9,0){.1}
\pscircle*[linewidth=0.5pt](3.9,0){.1}
\rput(-3.4,-1.6){\makebox(0,0){${\hhat G}_{k,k}$}}
\pcline[linewidth=0.5pt]{->}(-2.8,-1.4)(-1.86,-0.99)
\rput(-3.9,1){\makebox(0,0){$j$}}
\rput(-0.8,3.9){\makebox(0,0){$k$}}
\rput(3.9,1){\makebox(0,0){$l$}}
\rput(-0.8,-3.9){\makebox(0,0){$i$}}
\end{pspicture}
}
\caption{\small Typical geodesics corresponding to the geodesic functions
constituting a set of generators of the $D_n$ algebra in the approach of \cite{Ch1}. }
\label{fi:Dn}
\end{figure}

The relevant Poisson brackets are cubic, and can be found in
\cite{Ch1},~\cite{Ch2}. One of the aims of this paper is to describe
the $D_n$-algebras as reductions of  the $\mathfrak D_n$ algebras
which will be constructed in section \ref{se:newDn} below. We shall
realize this reduction first in the geometric case, i.e., using the
skein relations, then in the analytical case and finally in  the
abstract algebraic case.

Let us briefly describe the mapping class group action in terms of
the braid group. Note that both the Poisson brackets and the action of the braid group
do not depend on the perimeter of the hole, so these $D_n$ algebras can be considered as
abstract Poisson algebras, i.e. as algebras for $n^2$ formal objects $\hhat G_{i,j}$, $i,j=1,\dots,n$.

\subsubsection{Braid group relations for $D_n$-algebras}\label{se:brdn}

The action of the braid group on
the generators  ${\hhat G}_{i,j}$, $i,j=1,\dots,n$ of the $D_n$ algebra can be
presented in the explicit form as follows:
\be
\beta_{i,i+1}{\hhat G}_{k,l}={\ttilde{\hhat G}}_{k,l},: \
\left\{
\begin{array}{ll}
  {\ttilde{\hhat G}}_{i+1,k}={\hhat G}_{i,k} & k\ne i,i+1,\\
  {\ttilde {\hhat G}}_{i,k}={\hhat G}_{i,k}{\hhat G}_{i,i+1}-{\hhat G}_{i+1,k} &  k\ne i,i+1,\\
  {\ttilde {\hhat G}}_{k,i+1}={\hhat G}_{k,i} &  k\ne i,i+1, \\
  {\ttilde {\hhat G}}_{k,i}={\hhat G}_{k,i}{\hhat G}_{i,i+1}-{\hhat G}_{k,i+1} &  k\ne i,i+1, \\
  {\ttilde {\hhat G}}_{i,i+1}={\hhat G}_{i,i+1} &  \\
  {\ttilde {\hhat G}}_{i+1,i+1}={\hhat G}_{i,i} &  \\
  {\ttilde {\hhat G}}_{i,i}={\hhat G}_{i,i}{\hhat G}_{i,i+1}-{\hhat G}_{i+1,i+1} &  \\
\multispan{2} \hbox{${\ttilde {\hhat G}}_{i+1,i}={\hhat G}_{i+1,i}+{\hhat G}_{i,i+1}{\hhat G}_{i,i}^2
  -2{\hhat G}_{i,i}{\hhat G}_{i+1,i+1}$}  \\
\end{array}%
\right.
\label{Dn-braid-cl}
\ee
for $1\le i\le n-1$ and
\be
\beta_{n,1}{\hhat G}_{k,l}={\ttilde{\hhat G}}_{k,l}: \
\left\{
\begin{array}{ll}
  {\ttilde {\hhat G}}_{1,k}={\hhat G}_{n,k} & k\ne n,1,\\
  {\ttilde {\hhat G}}_{n,k}={\hhat G}_{n,k}{\hhat G}_{n,1}-{\hhat G}_{1,k} &  k\ne n,1,\\
  {\ttilde {\hhat G}}_{k,1}={\hhat G}_{k,n} &  k\ne n,1, \\
  {\ttilde {\hhat G}}_{k,n}={\hhat G}_{k,n}{\hhat G}_{n,1}-{\hhat G}_{k,1} &  k\ne n,1, \\
  {\ttilde {\hhat G}}_{n,1}={\hhat G}_{n,1} &  \\
  {\ttilde {\hhat G}}_{1,1}={\hhat G}_{n,n} &  \\
  {\ttilde {\hhat G}}_{n,n}={\hhat G}_{n,n}{\hhat G}_{n,1}-{\hhat G}_{1,1} &  \\
\multispan{2} \hbox{${\ttilde {\hhat G}}_{1,n}={\hhat G}_{1,n}+{\hhat G}_{n,1}{\hhat G}_{n,n}^2-2{\hhat G}_{n,n}{\hhat G}_{1,1}$}  \\
\end{array}%
\right. .
\label{Dn-braid-cl-n1}
\ee

\begin{lm} \label{lem-braid-Dn-old}
For any $n\ge2$, we have the braid group relation for the transformations (\ref{Dn-braid-cl}),
(\ref{Dn-braid-cl-n1}):
\be
\label{RRR-Dn-old}
\beta_{i-1,i}\beta_{i,i+1}\beta_{i-1,i}=\beta_{i,i+1}\beta_{i-1,i}\beta_{i,i+1}, i=1,\dots,n\ \mod n,
\ee
where  for $i=n$ the element $\beta_{n,n+1}$ stands for $\beta_{n,1}$.
\end{lm}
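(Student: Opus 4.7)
The plan is to verify (\ref{RRR-Dn-old}) by direct computation: for each generator $\hhat G_{k,l}$, one checks that both sides of the claimed braid relation produce the same image. The generators split naturally by how the unordered pair $\{k,l\}$ meets the three active indices $\{i-1,i,i+1\}$. Generators with $\{k,l\}\cap\{i-1,i,i+1\}=\emptyset$ are fixed by both $\beta_{i-1,i}$ and $\beta_{i,i+1}$, so nothing needs to be checked. The generators $\hhat G_{k,l}$ with $k<l$ form an invariant subset under each $\beta_{i,i+1}$: comparing the first four lines of (\ref{Dn-braid-cl}) with the $A_n$-braid rules (\ref{eq:braidch}), one sees that after identifying $\hhat G_{k,l}|_{k<l}$ with the $A_n$-generators $G_{k,l}$, the action is literally the same. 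Hence the braid relation on this subset follows from the already-established $A_n$ relation discussed after (\ref{BAB}), with no further computation.

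The remaining cases are the diagonal generators $\hhat G_{m,m}$ for $m\in\{i-1,i,i+1\}$ together with the lower-triangular generators $\hhat G_{k,l}$ with $k>l$ sharing an index with $\{i-1,i,i+1\}$. These must be verified by hand using the explicit formulas of (\ref{Dn-braid-cl}). The main obstacle here is the nonlinear cross-transformation
\[
\ttilde{\hhat G}_{i+1,i}=\hhat G_{i+1,i}+\hhat G_{i,i+1}\hhat G_{i,i}^{2}-2\hhat G_{i,i}\hhat G_{i+1,i+1}:
\]
propagated through a triple composition and combined with the quadratic swap $\ttilde{\hhat G}_{i,i}=\hhat G_{i,i}\hhat G_{i,i+1}-\hhat G_{i+1,i+1}$, it produces polynomial expressions of fairly high degree on each side of (\ref{RRR-Dn-old}). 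They will be shown to agree after systematic simplification using the $A_n$-identities already in force on the upper triangular part; the calculation is lengthy but routine once that reduction is in hand.

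Finally, the braid relations involving the new generator $\beta_{n,1}$ reduce to the previously verified ones by cyclic symmetry. Setting $n+1\equiv 1$, the formula (\ref{Dn-braid-cl-n1}) for $\beta_{n,1}$ coincides with (\ref{Dn-braid-cl}) taken at $i=n$, and both the set of generators $\hhat G_{k,l}$ and the prescription for the braid action are manifestly invariant under the cyclic relabeling $k\mapsto k+1\pmod n$ of the orbifold indices. Thus the wraparound relations $\beta_{n-1,n}\beta_{n,1}\beta_{n-1,n}=\beta_{n,1}\beta_{n-1,n}\beta_{n,1}$ and $\beta_{n,1}\beta_{1,2}\beta_{n,1}=\beta_{1,2}\beta_{n,1}\beta_{1,2}$ follow from the non-wraparound case by relabeling, completing the proof.
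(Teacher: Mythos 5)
Your structural reductions are sound and do save real work: generators $\hhat G_{k,l}$ with $\{k,l\}$ disjoint from $\{i-1,i,i+1\}$ are indeed fixed by both sides; the strictly upper-triangular generators form an invariant subset on which (\ref{Dn-braid-cl}) literally reproduces the $A_n$ rules (\ref{eq:braidch}), so the relation there follows from the $A_n$ case; and since (\ref{Dn-braid-cl-n1}) is exactly (\ref{Dn-braid-cl}) read at $i=n$ with $n+1\equiv 1$, and the defining formulas are uniform in the cyclic labels, conjugation by the cyclic shift does intertwine $\beta_{i,i+1}$ with $\beta_{i+1,i+2}$ and reduces the wraparound relations to an interior one (for $n\ge 3$; note that for $n=2$ there is no interior relation, so your last step gives nothing and the single relation must be checked directly).

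The genuine gap is that the central verification is never performed. For the diagonal generators $\hhat G_{m,m}$ and the lower-triangular generators meeting the active window — precisely the generators governed by the quadratic rule $\ttilde{\hhat G}_{i,i}=\hhat G_{i,i}\hhat G_{i,i+1}-\hhat G_{i+1,i+1}$ and the cubic rule for $\ttilde{\hhat G}_{i+1,i}$ — you only announce that the two triple composites "will be shown to agree." That is the entire new content of the lemma beyond the $A_n$ case, and it is exactly where a wrong sign or a missing term in (\ref{Dn-braid-cl}) would surface; until the relation is checked on, say, $\hhat G_{i+1,i}$, $\hhat G_{i,i-1}$ and $\hhat G_{i+1,i-1}$ (where the composites produce polynomials of high degree in the generators), nothing beyond the $A_n$ statement has been proved. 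If you want to avoid this computation, the efficient route — the one this paper itself sets up — is the matrix form: by Lemma \ref{lem-braid-Dn-matrix} and Theorem \ref{th-Dn}, the full $D_n$ action is the restriction of the conjugation action $\mathcal G(\lambda)\mapsto B\,\mathcal G(\lambda)\,B^T$ of Proposition \ref{lem-braid}, for which the braid relation reduces to a finite identity among the $B$-matrices evaluated at the appropriately transformed level-zero entries, and faithfulness of the reduction then transfers the relation to the generators $\hhat G_{k,l}$.
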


Note that the second braid-group relation (\ref{eq:secondb}) is lost
in the case of $D_n$-algebras. This is due to the topological restriction imposed by the extra hole.

Presenting the braid-group action in the matrix-action (covariant)
form (\ref{BAB}) is a nontrivial problem. In fact special combinations of ${\hhat G}_{i,j}$
admit similar transformation laws under the subgroup
$\langle\beta_{1,2},\dots,\beta_{n-1,n}\rangle$ of braid-group
transformations generated by relations (\ref{Dn-braid-cl}) alone. In
fact the following result was proved in \cite{Ch1}:

\begin{lm} \label{lem-braid-Dn-matrix}
Consider the $n\times n$ skewsymmetric matrix $\widehat{\mathcal R}$ of entries:
\be\label{R.cl}
({\widehat{\mathcal R}})_{i,j}:=\left\{ \begin{array}{cc}
                        -{\hhat G}_{j,i}-{\hhat G}_{i,j}+{\hhat G}_{i,i}{\hhat G}_{j,j} &\quad j<i \\
                        {\hhat G}_{j,i}+{\hhat G}_{i,j}-{\hhat G}_{i,i}{\hhat G}_{j,j} &\quad  j>i \\
                               0 &\quad  j=i \\
                             \end{array}
                             \right.,
\ee
 the symmetric matrix $\widehat{\mathcal S}$ of entries:
\be
\label{S.cl}
({\widehat{\mathcal S}})_{i,j}:={\hhat G}_{i,i}{\hhat G}_{j,j}\quad \hbox{for all}\quad 1\le i,j\le n;
\ee
and the upper triangular matrix $\hhat{\mathcal A}$ of entries
\be
\label{hatA-matrix}
{\hhat{\mathcal A}}_{i,j}
=\left\{\begin{array}{cc}
                     {\hhat G}_{i,j} &\quad i<j \\
                        0 &\quad  i>j \\
                               1&\quad  i=j \\
                   \end{array}
\right.
\ee
Then any linear combination $w_1\widehat{\mathcal A}+w_2\widehat{\mathcal A}^T+\rho
{\widehat{\mathcal R}}+\sigma {\widehat{\mathcal S}}$ with complex $w_1$, $w_2$, $\rho$,
and $\sigma$ transforms by formula (\ref{BAB}) under the subgroup
$\langle\beta_{1,2},\dots,\beta_{n-1,n}\rangle$ of braid-group
transformations generated by relations (\ref{Dn-braid-cl}) alone.
\end{lm}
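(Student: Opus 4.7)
The plan is a direct entry-by-entry verification. Since the map $M\mapsto B_{i,i+1}MB_{i,i+1}^T$ is linear in $M$, it suffices to show that each of the four constituent matrices $\widehat{\mathcal A}$, $\widehat{\mathcal A}^T$, $\widehat{\mathcal R}$, $\widehat{\mathcal S}$ transforms by the conjugation rule (\ref{BAB}) under each generator $\beta_{i,i+1}$ with $1\le i\le n-1$; the statement about arbitrary linear combinations with coefficients $w_1,w_2,\rho,\sigma$ then follows immediately. Because $B_{i,i+1}$ differs from the identity only in the $2\times 2$ block on indices $\{i,i+1\}$, the conjugate $B_{i,i+1}MB_{i,i+1}^T$ agrees with $M$ on every entry $(k,l)$ with $\{k,l\}\cap\{i,i+1\}=\emptyset$. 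The braid rules (\ref{Dn-braid-cl}) leave precisely these entries unchanged, so those cases are automatic and the verification is reduced to the finitely many entries in rows or columns indexed by $i$ or $i+1$.

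For the off-block positions with exactly one index in $\{i,i+1\}$, the computation is identical in structure to the $A_n$ case (\ref{BAB})--(\ref{eq:braidch}): expanding $(B_{i,i+1}MB_{i,i+1}^T)_{k,i}$ etc.\ produces the combinations $\hhat G_{i,i+1}M_{i,k}-M_{i+1,k}$ and $M_{i,k}$ which, when $M$ is taken to be any of $\widehat{\mathcal A}$, $\widehat{\mathcal A}^T$, $\widehat{\mathcal R}$ or $\widehat{\mathcal S}$, reproduce on the nose the first four lines of (\ref{Dn-braid-cl}). For $\widehat{\mathcal S}=(\hhat G_{i,i}\hhat G_{j,j})_{i,j}$, which is the outer product of the diagonal vector with itself, the check further reduces to the fact that the column vector $(\hhat G_{k,k})_{k=1}^n$ transforms linearly under $B_{i,i+1}$, and this is exactly the content of the two rules $\tilde{\hhat G}_{i+1,i+1}=\hhat G_{i,i}$ and $\tilde{\hhat G}_{i,i}=\hhat G_{i,i}\hhat G_{i,i+1}-\hhat G_{i+1,i+1}$.

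The only genuinely delicate case, and the main obstacle, is the $(i+1,i)$-entry of $\widehat{\mathcal R}$. On the one hand the matrix formula gives
\[
(B_{i,i+1}\widehat{\mathcal R}B_{i,i+1}^T)_{i+1,i}=\hhat G_{i,i+1}(\widehat{\mathcal R})_{i,i}-(\widehat{\mathcal R})_{i,i+1}=\hhat G_{i,i}\hhat G_{i+1,i+1}-\hhat G_{i,i+1}-\hhat G_{i+1,i}.
\]
On the other hand the transformed entry is $\tilde{\hhat G}_{i,i}\tilde{\hhat G}_{i+1,i+1}-\tilde{\hhat G}_{i,i+1}-\tilde{\hhat G}_{i+1,i}$, and here both $\tilde{\hhat G}_{i,i}\tilde{\hhat G}_{i+1,i+1}=\hhat G_{i,i}\bigl(\hhat G_{i,i}\hhat G_{i,i+1}-\hhat G_{i+1,i+1}\bigr)$ and the exotic cubic rule $\tilde{\hhat G}_{i+1,i}=\hhat G_{i+1,i}+\hhat G_{i,i+1}\hhat G_{i,i}^2-2\hhat G_{i,i}\hhat G_{i+1,i+1}$ from (\ref{Dn-braid-cl}) contribute cubic terms. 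The claim therefore hinges on the identity that the term $\hhat G_{i,i}^2\hhat G_{i,i+1}$ coming from $\tilde{\hhat G}_{i,i}\tilde{\hhat G}_{i+1,i+1}$ exactly cancels the cubic term in $\tilde{\hhat G}_{i+1,i}$, and that the two $\hhat G_{i,i}\hhat G_{i+1,i+1}$ contributions combine to reproduce the bilinear term above. This is the essential mechanism that makes the lemma work: the nonlinearity built into the braid rules for $\hhat G_{i,i}$, $\hhat G_{i+1,i+1}$, and in particular the cubic rule for $\hhat G_{i+1,i}$, is engineered precisely so that the combined quantity recorded in $\widehat{\mathcal R}$ transforms by the linear formula $M\mapsto BMB^T$. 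Once this single cancellation is checked, the remaining entries of $\widehat{\mathcal R}$ (and of $\widehat{\mathcal A}$, $\widehat{\mathcal A}^T$, $\widehat{\mathcal S}$) are routine and the covariance of every linear combination $w_1\widehat{\mathcal A}+w_2\widehat{\mathcal A}^T+\rho\widehat{\mathcal R}+\sigma\widehat{\mathcal S}$ follows.
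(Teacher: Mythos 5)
Your verification is correct. Note that the paper itself offers no proof of this lemma --- it simply quotes the result from \cite{Ch1} --- so there is no in-text argument to compare against; your direct entry-by-entry check is the natural one. The decisive computation is exactly where you place it: in the $(i{+}1,i)$ entry of $\widehat{\mathcal R}$ the cubic term ${\hhat G}_{i,i}^2{\hhat G}_{i,i+1}$ produced by ${\ttilde {\hhat G}}_{i,i}{\ttilde {\hhat G}}_{i+1,i+1}$ cancels against the cubic term in the rule for ${\ttilde {\hhat G}}_{i+1,i}$, while $-{\hhat G}_{i,i}{\hhat G}_{i+1,i+1}+2{\hhat G}_{i,i}{\hhat G}_{i+1,i+1}$ restores the bilinear term, giving ${\hhat G}_{i,i}{\hhat G}_{i+1,i+1}-{\hhat G}_{i,i+1}-{\hhat G}_{i+1,i}$ as required; the off-block entries of $\widehat{\mathcal R}$ (which involve the diagonal rules for ${\ttilde{\hhat G}}_{i,i}$ and ${\ttilde{\hhat G}}_{i+1,i+1}$ as well as the first four lines of (\ref{Dn-braid-cl})) and the outer-product argument for $\widehat{\mathcal S}$ all go through as you describe.
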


Below we construct the matrix representation of the total braid group
action and find the central elements of the $D_n$ algebra (see sub--section \ref{subse:DDn}).

\section{Monodromy preserving deformations}\label{se:schl}

In this section we interpret the matrices $\gamma_1,\dots,\gamma_n$
as monodromy matrices of a Fuchsian system with $2\times 2$ residue
matrices ${A}_j$ independent on $\lambda$:
\begin{equation}
{{\rm d}\over{\rm d}\lambda} \Phi=\left(
\sum_{k=1}^{n}{{A}_k\over \lambda-u_k} \right)\Phi,
\label{N1in}
\end{equation}
where ${\bf u}=(u_1,\dots,u_{n})$ are the pairwise distinct pending vertices in the fat graph.
The residue matrices ${A}_j$ satisfy the following conditions:
$$
{\rm eigen}\left({A}_j\right)=\pm\frac{1}{4}
\quad\hbox{and}\quad
-\sum_{k=1}^{n}{A}_k={A}_\infty,
$$
where, given
$$
\mu:=\left\{\begin{array}{cc}
\frac{P}{4\pi i},&\quad\hbox{for }\, n \hbox{ odd}\\
\frac{P}{4\pi i}+\frac{1}{2},&\quad\hbox{for }\, n \hbox{ even}\\
\end{array}\right.
$$
\begin{equation}\label{eq:Ainf}
{A}_\infty:=\left(
\begin{array}{cc}
\mu& 0\\ 0 & -\mu\\
\end{array}\right),\,\,\hbox{for }\,\mu\neq 0\hbox{ and }
{A}_\infty:=\left(
\begin{array}{cc}0& 1\\ 0 & 0\\\end{array}\right),\,\,\hbox{for }\,\mu= 0. \end{equation}

The description of the monodromy data of the system (\ref{N1in}) is
recalled in Appendix~A. It is convenient to fix the base point of
the fundamental group at $\infty$ so that one actually considers the
monodromy matrices
\begin{equation}\label{eq:mon-d}
M_i = C_\infty^{-1} \gamma_i C_\infty, \qquad M_\infty = C_\infty^{-1} \gamma_\infty C_\infty,
\end{equation}
where $C_\infty$ is the matrix of the eigenvalues of $\gamma_\infty$  so that
\begin{eqnarray}\label{eq:Minf}
& M_\infty =\left(
\begin{array}{cc}(-1)^{n} e^{P/2}&0\\ 0& (-1)^{n}e^{-P/2}\\ \end{array}
\right),&\hbox{for}\,P\neq 0,\nn\\
\\
& M_\infty:=\left(
\begin{array}{cc}(-1)^{n}& 1\\ 0 & (-1)^{n}\\\end{array}\right),&\hbox{for}\, P=0\nn
\end{eqnarray}

Given a point in the Teichm\"uller space, specified by
$\gamma_1,\dots,\gamma_n$, or equivalently $M_1,\dots,M_n$ there
exists a Fuchsian system having monodromy matrices $M_1,\dots,M_n$.
More precisely the following general theorems hold true:

\begin{theorem}\cite{Dek}\label{th:RH}
Given $n$ arbitrary $2\times2$ matrices $M_1,\dots,M_n$ and an arbitrary number $\mu$ such that
$$
M_\infty:=(M_1 M_2 \dots M_{n-1} M_n)^{-1}
$$
is given by
\begin{equation}\label{eq:Minf1}
M_\infty =\left\{\begin{array}{l}
\left(
\begin{array}{cc} e^{2i\,\pi\mu}&0\\ 0& e^{-2i\,\pi\mu}\\ \end{array}
\right),\quad\hbox{for}\,\mu\not\in \mathbb Z,\, \frac{1}{2}+\mathbb Z,\\
\\
\left(\begin{array}{cc} 1& 1\\ 0 &1\\ \end{array}\right),\quad\hbox{for}\, \mu\in\mathbb Z,\\
\\
\left(\begin{array}{cc} -1& 1\\ 0 &-1\\ \end{array}\right),\quad\hbox{for}\, \mu\in\frac{1}{2}+\mathbb Z,\\
\end{array}\right.
\end{equation}
and fixed a point ${\bf u}^0=(u_1^0,\dots,u_n^0)\in X_n$,
$X_n:=\mathbb C^n\setminus\{diagonals\}$, for any neighbourhood
$U\subset X_n$ of ${\bf u}^0$ there exists ${\bf u}\in U$ and a
Fuchsian system
$$
{{\rm d}\over{\rm d}\lambda} \Phi=\left(
\sum_{k=1}^{n}{{A}_k\over \lambda-u_k} \right)\Phi,
$$
with the given monodromy matrices  $M_1,\dots,M_n$ and with $A_\infty$ given by (\ref{eq:Ainf}).
\end{theorem}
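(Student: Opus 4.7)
The plan is to attack this as an instance of the $2\times 2$ Riemann--Hilbert problem, combining the Plemelj--R\"ohrl construction with a rank-two trivialization argument, and then fixing the residue normalizations so that the prescribed $A_\infty$ is realized.

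First I would forget the positions $u_k$ for a moment and view the data $(M_1,\dots,M_n,M_\infty)$ as a representation $\rho\colon\pi_1(\mathbb P^1\setminus\{u_1^0,\dots,u_n^0,\infty\})\to GL_2(\mathbb C)$, using the standard choice of loops whose product is the identity (so the constraint $M_\infty=(M_1\cdots M_n)^{-1}$ is automatic). By the classical theorem of R\"ohrl one can always realize $\rho$ as the monodromy of a meromorphic connection $\nabla$ on some holomorphic rank-two vector bundle $E\to\mathbb P^1$ with logarithmic poles at the $u_k^0$ and at $\infty$. By Birkhoff--Grothendieck $E\cong\mathcal O(k_1)\oplus\mathcal O(k_2)$, and if $E$ happens to be trivial then $\nabla$ is automatically a Fuchsian system of the required shape at $\mathbf u=\mathbf u^0$, with residues $A_k$ whose eigenvalues are $\pm\tfrac14$ (the unique logarithms of $\pm i$ in the strip fixed by the convention on $M_\infty$ at $\infty$).

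The core of the argument is therefore to reduce to the balanced splitting type $k_1=k_2$. In rank two one has very precise control: any elementary transformation at a single pole shifts the splitting type by $\pm 1$ in one summand while changing the residue only by a specific conjugate that preserves the semisimple part. An iterated application of such transformations, which is the rank-two case of Bolibruch's reduction, either reaches the balanced bundle (yielding the Fuchsian system at $\mathbf u^0$ itself) or terminates at an obstruction. I expect the main obstacle to be precisely this obstruction: in higher rank it can be genuine, but in rank two Dekkers' observation is that the obstruction is always removable by an arbitrarily small perturbation of the pole positions, because the map sending $\mathbf u$ to the splitting type of the corresponding bundle is upper semicontinuous and generically balanced. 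Concretely I would show that the locus $\{\mathbf u\in U\colon E(\mathbf u)\text{ is non-trivial}\}$ is a proper analytic subvariety of $U$, using that non-triviality is cut out by the vanishing of a determinant built from local solutions, and then pick $\mathbf u$ in its complement.

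Having obtained a Fuchsian system on the trivial bundle with the correct monodromy, the last step is to match the normalization at infinity. Gauge freedom by constant $GL_2(\mathbb C)$ transformations conjugates all residues simultaneously and acts on $M_\infty$ by conjugation, so one can put $-\sum_k A_k$ into the Jordan form prescribed by~(\ref{eq:Ainf}): when $\mu\notin\frac12\mathbb Z$ the matrix $M_\infty$ is semisimple and so is $A_\infty$, with eigenvalues $\pm\mu$ fixed by the exponential relation $M_\infty=e^{2\pi i A_\infty}$; when $2\mu\in\mathbb Z$ the matrix $M_\infty$ is a non-trivial Jordan block (by hypothesis), forcing $A_\infty$ into the Jordan normal form with the single eigenvalue $\mu$, and a further constant gauge rescales the off-diagonal entry to $1$. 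The eigenvalue condition at the finite singularities follows from the constraint that $\Tr\gamma_k=0$ translates into $\mathrm{eigen}(A_k)=\pm\tfrac14$ once the branch of the logarithm is pinned down, which is done uniformly by requiring the exponents to lie in a fixed fundamental domain. This finishes the construction for $\mathbf u$ in a dense open subset of the chosen neighbourhood $U$ of $\mathbf u^0$.
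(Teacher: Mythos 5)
The paper itself gives no proof of this theorem; it is imported verbatim from Dekkers \cite{Dek}, so there is no internal argument to compare yours against. Judged on its own terms, your outline has the right architecture for the rank-two Riemann--Hilbert problem (R\"ohrl realization on some bundle, Birkhoff--Grothendieck splitting, perturbation of the pole positions), and you correctly locate the rank-two-specific input in the perturbation step. But the two steps that carry the real weight are not established.

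First, the mechanism you propose for reaching the balanced splitting type is incompatible with the normalization the theorem demands. An elementary (Hecke) transformation at $u_k$ shifts one of the two local exponents at $u_k$ by an integer while changing $\deg E$ by $\pm1$; it does \emph{not} merely conjugate the residue or ``preserve the semisimple part''. After iterating such transformations to trivialize the bundle you will in general end up with residues whose eigenvalues lie in $\pm\tfrac14+\mathbb Z$ rather than being exactly $\pm\tfrac14$, and the exponent at infinity will have drifted away from (\ref{eq:Ainf}). Once all exponents are prescribed (and they sum to zero, so the Fuchs relation forces $\deg E=0$, i.e. $E\cong\mathcal O(k)\oplus\mathcal O(-k)$), there is no further freedom in the bundle except through the positions $\mathbf u$ --- which is precisely why the theorem is stated with ``there exists $\mathbf u\in U$'' instead of asserting solvability at $\mathbf u^0$ itself. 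Second, the decisive claim that $\{\mathbf u\in U\colon k(\mathbf u)>0\}$ is a \emph{proper} analytic subvariety is asserted, not proved: upper semicontinuity of the splitting type makes the jump locus closed and analytic, but says nothing about properness. Producing a single $\mathbf u\in U$ with $k(\mathbf u)=0$ --- equivalently, showing the generic splitting type is trivial for the given monodromy and exponents --- is the actual content of Dekkers' rank-two theorem, and your sketch assumes it rather than derives it. Until that existence statement is supplied, the argument is circular at its key step.
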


Indeed there is a whole family of Fuchsian systems with the same
monodromy matrices, they are given by the solutions  of the
Schlesinger equations (\ref{schleq}). In fact the following theorem is true {\it in any dimension:}:

\begin{theorem}\cite{Mal},~\cite{Miwa}\label{th:iso}
Let $M_1,\dots,M_n$ be the monodromy matrices of the Fuchsian system
\begin{equation}\label{eq:fuchs0}
{{\rm d}\over{\rm d}\lambda} \Phi^0=\left(
\sum_{k=1}^{n}{{A}_k^0\over \lambda-u_k^0} \right)\Phi^0,
\end{equation}
with ${\bf u}^0=(u_1^0,\dots,u_n^0)\in X_n$. Then there exists a
neighbourhood $U\subset X_n$ of ${\bf u}^0$ such that for any ${\bf u}\in U$
there exists a unique $n$-uple $A_1({\bf u}),\dots,A_n({\bf u})$
of analytic valued matrix functions such that
$$
A_i({\bf u}^0)=A_i^0,\qquad i=1,\dots,n,
$$
and the monodromy matrices of the system
$$
{{\rm d}\over{\rm d}\lambda} \Phi=\left(
\sum_{k=1}^{n}{{A}_k({\bf u})\over \lambda-u_k} \right)\Phi,
$$
with respect to the same basis of loops, coincide with
$M_1,\dots,M_n$. The matrices $A_1({\bf u}),\dots,A_n({\bf u})$  are
solutions of the Schlesinger equations:
\begin{equation}
{\partial\over\partial u_j} {A}_i=
{[ {A}_i, {A}_j]\over u_i-u_j},\quad
{\partial\over\partial u_i} {A}_i=
-\sum_{j\neq i}{[ {A}_i, {A}_j]\over u_i-u_j}.
\label{schleq}\end{equation}
The solution $\Phi^0(\lambda)$ of (\ref{eq:fuchs0}) can be uniquely continued, for $\lambda\neq u_i$ to an analytic function
$$
\Phi(\lambda,{\bf u}),\qquad {\bf u}\in U,
$$
such that $\Phi(\lambda,{\bf u}^0)=\Phi^0(\lambda)$. This continuation is the local solution of the Cauchy problem with
the initial data $\Phi^0$ for the following system:
$$
\frac{\partial}{\partial u_i}\Phi = -\frac{A_i}{\lambda-u_i}\Phi.
$$
Moreover the functions $A_1({\bf u}),\dots,A_n({\bf u})$  and $\Phi(\lambda,{\bf u})$ can be
continued analytically to global meromorphic functions on the universal
coverings of $X_n$ and ${\mathbb P}^1\setminus\{u_1,\dots,u_n\}\otimes X_n$
respectively.
\end{theorem}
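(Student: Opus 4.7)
The plan is to run the classical isomonodromy argument: encode the condition of constant monodromy as an auxiliary linear system for $\Phi$ in the variables $u_i$, derive the Schlesinger equations as the compatibility condition of this system with (\ref{eq:fuchs0}), invoke the Cauchy--Kovalevskaya theorem to get local existence and uniqueness, and finally use a Painlev\'e-type argument to upgrade local to global meromorphic continuation.

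First I would argue that if the monodromy representation is to remain constant as $\mathbf{u}$ varies in a simply connected neighbourhood $U$ of $\mathbf{u}^0$, then for each $i$ the logarithmic derivative
\[
\Omega_i(\lambda,\mathbf{u}) := \frac{\partial \Phi}{\partial u_i}\,\Phi^{-1}
\]
must be a single-valued meromorphic function of $\lambda$ on $\mathbb{P}^1$, because $\Phi$ is multiplied on the right by a constant monodromy matrix under analytic continuation and this cancels in the ratio. The only possible singularities are the poles $u_1,\dots,u_n$ and $\infty$. A local analysis near $\lambda=u_j$ using the normal form of a Fuchsian singularity shows that $\Omega_i$ has at most a simple pole there, with residue $-A_i\delta_{ij}$ (by matching the behaviour forced by the translation $\lambda\mapsto\lambda-u_i$), and by imposing $A_\infty$ constant one gets that $\Omega_i$ vanishes at $\infty$. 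Hence necessarily
\[
\frac{\partial \Phi}{\partial u_i} = -\frac{A_i}{\lambda-u_i}\,\Phi.
\]

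Next I would impose the compatibility of this auxiliary system with the original Fuchsian equation. Computing $\partial_\lambda \partial_{u_i}\Phi = \partial_{u_i}\partial_\lambda \Phi$ and $\partial_{u_i}\partial_{u_j}\Phi = \partial_{u_j}\partial_{u_i}\Phi$ and matching the residues at each pole $\lambda=u_k$ yields precisely the Schlesinger system (\ref{schleq}). Conversely, if the $A_i(\mathbf{u})$ solve the Schlesinger equations, the mixed system is Frobenius-integrable, so there exists a unique joint fundamental solution $\Phi(\lambda,\mathbf{u})$ with $\Phi(\lambda,\mathbf{u}^0)=\Phi^0(\lambda)$, and its monodromy matrices are independent of $\mathbf{u}$ (since $\partial_{u_i}M_k=0$ follows by computing the analytic continuation of $\Phi$ around a fixed loop, using the single-valuedness of $A_i/(\lambda-u_i)$ in $\lambda$ along the loop). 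Local existence and uniqueness of $A_i(\mathbf{u})$ then follow from the Cauchy--Kovalevskaya theorem applied to (\ref{schleq}), since the right-hand side is holomorphic in $(A,\mathbf{u})$ on $X_n$. This proves the first half of the theorem and the identification with the prescribed monodromy.

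For the global meromorphic continuation --- the main obstacle --- I would invoke the argument of Malgrange (completed by Miwa). The point is that the Schlesinger equations enjoy the Painlev\'e property: along any path in the universal cover of $X_n$, the only singularities that can develop in the $A_i(\mathbf{u})$ are poles, and they form the zero divisor (the ``Malgrange $\tau$-divisor'') of a holomorphic function $\tau(\mathbf{u})$ built from $\Phi$. Concretely, one fixes a path $\gamma$ in $X_n$ from $\mathbf{u}^0$ and continues the joint solution of the Pfaffian system $\mathrm d\Phi = \bigl(\sum_k A_k\,\mathrm d\log(\lambda-u_k)\bigr)\Phi$ along $\gamma$. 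Because the monodromy representation is fixed, one can solve the Riemann--Hilbert problem with the same monodromy data at any point of $\gamma$ where the associated Birkhoff--Grothendieck splitting is trivial; the locus where triviality fails is a codimension-one analytic subvariety, and on its complement the factorisation produces precisely the meromorphic continuation of $(A_1,\dots,A_n)$ and $\Phi$. Matching this continuation with the local solution above on overlaps shows it is indeed the analytic continuation of the Cauchy problem, and the argument extends to $\Phi$ itself on the universal cover of $\bigl(\mathbb{P}^1\setminus\{u_1,\dots,u_n\}\bigr)\times X_n$. The delicate point here, which I would treat with care, is ruling out essential singularities or accumulation of poles; this rests on the fact that both the Riemann--Hilbert correspondence and the Birkhoff factorisation depend holomorphically on parameters wherever defined, so the set of ``bad'' $\mathbf{u}$ is analytic rather than merely closed.
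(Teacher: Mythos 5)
The paper offers no proof of this theorem: it is imported verbatim from Malgrange and Miwa, so there is no in-paper argument to compare yours against. Your outline correctly reproduces the standard route of those references --- single-valuedness of $\partial_{u_i}\Phi\cdot\Phi^{-1}$ from constancy of the monodromy, Liouville's theorem to pin it down as $-A_i/(\lambda-u_i)$, cross-differentiation to obtain (\ref{schleq}), and Malgrange's $\tau$-divisor/Birkhoff--Grothendieck argument for the global meromorphic continuation. Two points deserve tightening. First, the Schlesinger system is an \emph{overdetermined} Pfaffian system (each $A_i$ is subject to $n$ first-order equations), so Cauchy--Kovalevskaya is not the right tool; local existence and uniqueness require the holomorphic Frobenius theorem, which in turn requires verifying the complete integrability of (\ref{schleq}) itself --- a direct computation with the Jacobi identity --- before one can integrate it; as written, your argument risks circularity, since the integrability of the linear system in $\Phi$ presupposes that the $A_i(\mathbf{u})$ already solve Schlesinger. (Alternatively, local existence can be extracted from the Riemann--Hilbert solvability of Theorem \ref{th:RH} together with the uniqueness of a Fuchsian system with prescribed monodromy, which is how the surrounding text of the paper is organized.) Second, the residue computation at $\lambda=u_j$ tacitly assumes a non-resonant local normal form $\Phi=H_j(\lambda,\mathbf{u})(\lambda-u_j)^{\Theta_j}C_j$ with $H_j$ holomorphically invertible; the logarithmic (resonant) case needs a separate word. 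The global continuation paragraph is a fair summary of what Malgrange actually proves, but it is a deferral to the cited literature rather than a proof --- which is acceptable here precisely because the theorem itself is a citation.
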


The above theorems establish the {\it Riemann--Hilbert correspondence:}
$$
\mathcal M\slash \mathcal D\leftrightarrow \mathcal A\slash \mathcal D
$$
where $\mathcal D=\{D\in GL(2,\mathbb C) \hbox{ diagonal matrix}\}$ and
$$
\mathcal M := \{(M_1,\dots,M_n)\in SL(2,\mathbb C): \Tr M_i=0,\,(M_1M_2\dots M_n)^{-1}= M_\infty
 \hbox{ given in (\ref{eq:Minf})} \}
$$
and
$$
\mathcal A := \{(A_1,\dots,A_n)\in\mathfrak{sl}(2,\mathbb C): {\rm eigen}(A_i)=\pm\frac{1}{4},
\,\sum_{k=1}^{n}{A}_k=-{A}_\infty,\, A_\infty \hbox{ as in (\ref{eq:Ainf})} \}.
$$
The choice of the generators $\gamma_1,\dots,\gamma_n$ in the
Fuchsian group $\Delta_n$ allows us to extend the Riemann--Hilbert
correspondence to the "suitably" complexified Teichm\"uller space,
where "suitably" means the complexification of the Teichm\"uller
space that corresponds to the handle--body case. We postpone the
study of the extension of the Riemann--Hilbert correspondence to the
handle--body Teichm\"uller space to subsequent publications.

\subsection{Analytic continuation of the Schlesinger equations solutions and braid group action}

The procedure of the analytic  continuation of the solutions to the
Schlesinger equations  in terms of the action of the braid group
${\mathcal B}_n=\langle \beta_1,\dots,\beta_{n-1}\rangle$ on the
monodromy matrices $M_1,\dots,M_n$ was obtained in \cite{DM}. Let us
recall here the main ideas of this derivation.

According to Theorem \ref{th:iso}, any solution of the Schlesinger equations
can be continued analytically from a point ${\bf u}_0$ to any other point ${\bf u} \in X_n$
provided that the end-points are not the poles of the solution. The result
of the analytic continuation depends only on the homotopy class of the
path in $X_n$, i.e. one obtains a natural action of the pure braid group ${\mathcal P}_n$
$$
{\mathcal P}_n=\pi_1\left(X_n,{\bf u}_0\right)
$$
on the space of solutions of the Schlesinger equations. By using the
fact that thanks to Theorem \ref{th:RH} the solutions of the
Schlesinger equations are locally uniquely determined by the
monodromy matrices $M_1,\dots,M_n$, one can describe the procedure
of analytic  continuation by an action of the pure braid group on
the monodromy matrices. For technical simplicity,  we deal with the
action of the full braid group:
$$
{\mathcal B}_n=\pi_1\left(X_n\backslash S_n,{\bf u}_0\right),
$$
where $S_n$ is the symmetric group. This action is given by
\begin{eqnarray}\label{eq:braidDM}
&&
\beta_{i,i+1}(M_j)=M_j,\quad\hbox{for } j=1,\dots, i-1, i+2,\dots,n\nn\\
&&
\beta_{i,i+1}(M_i)=M_i M_{i+1}M_{i}^{-1},
\qquad \beta_{i,i+1}(M_{i+1})=M_i .
\end{eqnarray}
By using the skein relation, it is a straightforward computation to
show that on $G_{i,j}:=-\Tr(M_i M_j)$ the braid group action
coincides with the action (\ref{eq:braidch}), so that the action of
the mapping class group on the Teichm\"uller space of a disk with
$n$ marked points corresponds to the procedure of analytic
continuation of the corresponding solution to the Schlesinger
equations.

\section{Korotkin--Samtleben bracket}\label{se:KS}

In this section we  remind the Hamiltonian formulation of the
Schlesinger equations, the definition of the Korotkin--Samtleben
bracket and we show how to obtain the $A_n$ Poisson algebra from it.

\subsection{Hamiltonian formulation of the Schlesinger equations}

The Hamiltonian description of the Schlesinger equations in any dimension $m$ was derived
\cite{Hit2}
from the general construction of a Poisson bracket on the
space of flat connections in a principal $G$-bundle over a surface with
boundary
using Atiyah--Bott symplectic structure (see \cite{audin}). Explicitly this
approach yields the following well known formalism representing
the Schlesinger equations in
Hamiltonian form with $n$ time variables $u_1,\dots,u_{n}$ and
$n$ commuting time--dependent Hamiltonian flows on the dual space to the
direct sum of $n$ copies
of the Lie algebra ${\mathfrak{sl}}(m)$
\begin{equation}\label{lie}
{\mathfrak g}:=\oplus_{n} {\mathfrak{sl}}(m)\ni
\left( {A}_1, {A}_2, \dots, {A}_{n} \right).
\end{equation}

\begin{theorem} \cite{MJ1} \label{thm1.1}
The dependence of the solutions ${A}_k$, $k=1,\dots,n$, of the
Schlesinger equations upon the
variables $u_1,\dots,u_{n}$ is determined by  Hamiltonian systems
on (\ref{lie})
with time-dependent quadratic Hamiltonians
\begin{eqnarray}
&&
H_k= \sum_{l\neq k}
{{\rm Tr}\left({A}_k{A}_l\right)\over u_k-u_l},
\label{ham0}\\
&&
{\partial\over\partial u_k}{A}_l = \{{A}_l,H_k\}.\label{ham000}
\end{eqnarray}
\end{theorem}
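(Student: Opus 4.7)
The plan is to verify \eqref{ham000} by a direct computation with the Lie--Poisson (Kirillov--Kostant) bracket on $\mathfrak{g}^*=\bigoplus_{k=1}^{n}\mathfrak{sl}(m)^*$. Identifying $\mathfrak{g}^*$ with $\mathfrak{g}$ via the trace pairing $\langle X,Y\rangle=\Tr(XY)$, the matrix entries $(A_k)_{ij}$ become coordinate functions, and the bracket takes the form
\begin{equation*}
\{(A_k)_{ij},(A_l)_{pq}\}=\delta_{kl}\bigl(\delta_{iq}(A_k)_{pj}-\delta_{jp}(A_k)_{iq}\bigr),
\end{equation*}
so in particular entries belonging to different copies Poisson--commute, and the nontrivial relations reduce to the standard Lie--Poisson bracket on each $\mathfrak{sl}(m)^*$ summand.

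The key lemma that drives the whole computation is: for any constant matrix $C$ one has $\{A,\Tr(AC)\}=[C,A]$, a one-line consequence of the bracket above obtained by contracting indices. Applied to the individual summands of $H_k=\sum_{j\ne k}\Tr(A_kA_j)/(u_k-u_j)$, and using that $A_l$ brackets only with its own copy, this yields
\begin{equation*}
\{A_l,\Tr(A_kA_j)\}=\delta_{lj}[A_k,A_l]+\delta_{lk}[A_j,A_k]\qquad(k\ne j).
\end{equation*}

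I would then split the computation of $\{A_l,H_k\}$ into the two cases that already appear in the Schlesinger system \eqref{schleq}. When $l\ne k$ only the term $j=l$ in the sum survives, producing $\{A_l,H_k\}=[A_k,A_l]/(u_k-u_l)=[A_l,A_k]/(u_l-u_k)$, which is the off-diagonal Schlesinger equation $\partial_{u_k}A_l=[A_l,A_k]/(u_l-u_k)$. When $l=k$ every $j\ne k$ contributes and one obtains $\{A_k,H_k\}=\sum_{j\ne k}[A_j,A_k]/(u_k-u_j)=-\sum_{j\ne k}[A_k,A_j]/(u_k-u_j)$, matching the diagonal equation. The whole verification collapses to a few lines of index gymnastics once the bracket is in place.

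The only point that requires attention is sign book-keeping: the sign of the Kirillov--Kostant bracket and the Hamiltonian convention $\dot{f}=\{f,H\}$ must be chosen consistently so that the commutators emerge with the orientations prescribed in \eqref{schleq}; with the opposite convention one must replace $H_k$ by $-H_k$. I would close with a brief remark observing that the commutativity of the $n$ time-dependent flows, namely $\partial_{u_k}H_l-\partial_{u_l}H_k+\{H_l,H_k\}=0$, follows from the same bracket rules together with the $\mathrm{ad}$-invariance $\Tr(A_k[A_l,A_j])=\Tr(A_l[A_j,A_k])$, so that these Hamiltonians indeed generate a consistent isomonodromic Hamiltonian system.
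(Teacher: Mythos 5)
Your verification is correct: the Kirillov--Kostant bracket on each $\mathfrak{sl}(m)$ summand, the lemma $\{A,\Tr(AC)\}=[C,A]$, and the case split $l\ne k$ versus $l=k$ reproduce exactly the two equations in (\ref{schleq}), with the signs working out as you state. The paper itself offers no proof of Theorem~\ref{thm1.1} (it is quoted from Jimbo--Miwa), and your argument is the standard direct computation one would supply, so there is nothing to contrast.
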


Because of isomonodromicity the Hamiltonian equations (\ref{ham000}) can be restricted onto the symplectic leaves
$$
{\mathcal O}_1 \times \dots \times {\mathcal O}_n \in {\mathfrak g}
$$
obtained by fixation of the conjugacy classes ${\mathcal O}_1$,\dots,
${\mathcal O}_n$ of the matrices ${A}_1$, \dots, ${A}_n$.
The matrix ${A}_\infty$  is a common integral of  the Schlesinger
equations. Applying the procedure of symplectic reduction \cite{mars-wein}
one obtains the reduced symplectic space
\begin{eqnarray}\label{level}
&&
\left\{ A_1\in {\mathcal O}_1 , \dots, A_n\in {\mathcal O}_n,\,
A_\infty = \hbox{given diagonal matrix}\right\}
\nn\\
&&
\qquad\hbox{modulo simultaneous diagonal conjugations}.
\end{eqnarray}
The dimension of this reduced symplectic leaf in the generic situation
is equal to $2g$ where
$$
g={m(m-1)(n-1)\over2}-(m-1).
$$
In the $2\times 2$ case, i.e. for $m=2$ the dimension of the symplectic leaves is $2(n-2)$, which coincides with the
dimension of the Teichm\"uller space.

\subsection{Korotkin--Samtleben bracket}

The standard Lie--Poisson bracket on  ${\mathfrak g}^*$ can be represented in
$r$-matrix formalism:
$$
\left\{ {A}({\lambda_1})\otim_,{A}(\lambda_2)\right\}
= \left[ \one{A}({\lambda_1})+\two{A} (\lambda_2),{r}({\lambda_1} - \lambda_2)\right] ,
$$
where $r({z})=\frac{\Omega}{\lambda}$ is a {\it classical
$r$-matrix}, i.e a solution of the classican Yang--Baxter equation.
In the case of ${\mathfrak g}:=\oplus_{n} {\mathfrak{sl}}(m)$,
$\Omega$ is the {\it exchange matrix}\/ $\Omega=\sum_{i,j}\one{E_{ij}}\otim\two{E_{ji}}$ (we identify ${\mathfrak{sl}}(m)$ with its
dual by using the Killing form $(A,B)={\rm Tr}\, AB, \quad A, B \in
{\mathfrak{sl}}(m)$).

\noindent The standard Lie--Poisson bracket on
$\mathfrak{sl}(m,{\mathbb C})$ is mapped by the Riemann--Hilbert
correspondence to the {\it Korotkin--Samtleben bracket:}

\begin{eqnarray}\nn\label{eq:KSP}
&&
\left\{ M_i\otim_,M_i\right\}
=\frac{1}{2}\left(\two{M_i}\Omega\one{M_i} - \one{M_i} \Omega \two{M_i} \right)\\
&&
\\
&&
\left\{ M_i\otim_,M_j\right\}
=\frac{1}{2}\left(  \one{M_i} \Omega \two{M_j} +\two{M_j}\Omega\one{M_i}
-\Omega\one{M_i}\two{M_j}-\two{M_j}\one{M_i}\Omega\right) ,\quad\hbox{for}\,\,i<j.\nn
\end{eqnarray}
This bracket does not satisfy the Jacobi identity - however it
restricts to a Poisson bracket on the adjoint invariant objects.

\begin{lm}\label{lm:techKS}
The $A_n$ Poisson algebra (\ref{eq:NR}) is the Korotkin--Samtleben
bracket restricted to the adjoint invariant objects\footnote{The
Poisson  algebra (\ref{eq:NR}) was obtained in \cite{Ugaglia} as the
restriction of the Korotkin--Samtleben bracket to the traces of
products of $n\times n$ monodromy matrices.}
$$
G_{i,j}:=-\Tr (\gamma_i \gamma_j)=-\Tr (M_i M_j).
$$
\end{lm}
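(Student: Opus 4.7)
The plan is to compute $\{-\Tr(M_i M_k),-\Tr(M_j M_l)\}$ directly from the Korotkin--Samtleben formula (\ref{eq:KSP}) via the Leibniz rule and to check that it agrees with each case of (\ref{eq:NR}). By bilinearity of the bracket in each tensor slot I expand
\begin{equation*}
\{\one{M_iM_k}\otim_,\two{M_jM_l}\}=\one{M_i}\{M_k\otim_,M_j\}\two{M_l}+\one{M_i}\two{M_j}\{M_k\otim_,M_l\}+\{M_i\otim_,M_j\}\two{M_l}\one{M_k}+\two{M_j}\{M_i\otim_,M_l\}\one{M_k},
\end{equation*}
substitute each of the four elementary brackets from (\ref{eq:KSP}) (first line when the two indices coincide, second line with the $i<j$ ordering respected otherwise), and take $\Tr_{12}$.

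The tensor bookkeeping reduces to the two identities $\Omega\,\one{X}\two{Y}=\two{X}\one{Y}\,\Omega$ and $\Tr_{12}[\one{X}\two{Y}\,\Omega]=\Tr(XY)$, which together turn every term of $\Tr_{12}$ of the expansion above into an ordinary trace of a product of at most four of the $M_a$. These four-matrix traces in turn collapse to polynomials in the $G_{a,b}$ using two special features of the problem: each $M_a$ has vanishing trace and unit determinant, so Cayley--Hamilton gives $M_a^{2}=-I$ and $M_a^{-1}=-M_a$, and consequently the skein relation (\ref{eq:skein}) takes the symmetric form
\begin{equation*}
\Tr(M_aM_b)\,\Tr(M_cM_d)=\Tr(M_aM_bM_cM_d)+\Tr(M_aM_bM_dM_c),
\end{equation*}
which, together with cyclicity of the trace, resolves every four-matrix trace.

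Carrying this out case by case in (\ref{eq:NR}) yields the stated relations: for the two disjoint non-interleaved configurations the four contributions cancel outright; for the interleaved case $i<j<k<l$ only the quadratic part survives and produces $2(G_{i,j}G_{k,l}-G_{i,l}G_{k,j})$; and in each of the three shared-index cases the identity $M_a^{2}=-I$ collapses one of the four-matrix traces to a two-matrix trace, producing the linear tail ($-2G_{i,l}$, $+2G_{i,j}$, or $+2G_{k,l}$) with the correct sign.

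The main obstacle is clerical, not conceptual: one must coherently track the minus sign in $G_{i,j}=-\Tr(M_iM_j)$, the $\frac12$ prefactor of (\ref{eq:KSP}) (which, together with the doubling forced by $M_a^{2}=-I$, accounts for the factor-of-two enhancement relative to the Nelson--Regge algebra noted after (\ref{eq:NR})), and the sign produced when one must exchange the two tensor slots in (\ref{eq:KSP}) so that its $i<j$ convention applies to an interleaved quadruple of indices. No Jacobi check is needed: as recalled just above the lemma, the Korotkin--Samtleben bracket---though not Poisson on all of $\mathrm{Fun}(\mathcal M)$---restricts to a genuine Poisson bracket on the adjoint-invariant subalgebra generated by the $G_{a,b}$.
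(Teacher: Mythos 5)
Your proposal is correct and follows essentially the same route as the paper's proof: Leibniz expansion of the Korotkin--Samtleben bracket, reduction of the tensor traces to ordinary four-matrix traces via the exchange-matrix identities, and resolution of those traces using the skein relation together with $\Tr M_a=0$ and $M_a^{-1}=-M_a$. The only difference is presentational --- the paper packages the same two identities $\Omega\,\one{X}\two{Y}=\two{X}\one{Y}\,\Omega$ and $\onetwo\Tr\bigl(\one{X}\two{Y}\,\Omega\bigr)=\Tr(XY)$ into a graphical calculus (reused later for Theorem \ref{th:main}), while you apply them directly.
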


\begin{proof} We show how to prove relation:
\begin{equation}\label{eq:proof}
\left\{G_{i,k},G_{j,l}\right\}=2\left(G_{i,j}G_{k,l}-G_{i,l}G_{k,j}\right),\quad\hbox{for}\, \, i<j<k<l.
\end{equation}
By definition of $G_{i,j}$ we have:
\begin{eqnarray}
\left\{G_{i,k},G_{j,l}\right\}&=&\{\Tr(M_i M_k),\Tr(M_j M_l)\}=
\onetwo\Tr\left(\left\{\one M_i\otim_,\two M_j\right\} \one M_k\two M_l+\right.\nn\\
&&+\two M_j\left\{\one M_i\otim_,\two M_l\right\} \one M_k+
 \one M_i\left\{\one M_k\otim_,\two M_j\right\}\two M_l+\nn\\
&&+\left.\one M_i\two M_j\left\{\one M_k\otim_,\two M_l\right\}
\right).\nn
\end{eqnarray}
Applying the Korotkin--Samtleben bracket (\ref{eq:KSP}), one gets:
\begin{eqnarray}
\left\{G_{i,k},G_{j,l}\right\}&=&\frac{1}{2} \onetwo\Tr
\left[\left( \one{M_i} \Omega \two{M_j} +\two{M_j}\Omega\one{M_i}-\Omega\one{M_i}\two{M_j}-
\two{M_j}\one{M_i}\Omega\right) \one M_k\two M_l+\right.\nn\\
&&+\two M_j\left( \one{M_i} \Omega \two{M_l} +\two{M_l}\Omega\one{M_i}-\Omega\one{M_i}\two{M_l}-
\two{M_l}\one{M_i}\Omega\right) \one M_k+\label{eq:pbproof}\\
&&-\one M_i\left( \one{M_k} \Omega \two{M_j} +\two{M_j}\Omega\one{M_k}-\Omega\one{M_k}\two{M_j}-
\two{M_j}\one{M_k}\Omega\right) \two M_l+\nn\\
&&\left.+\one M_i\two M_j\left( \one{M_k} \Omega \two{M_l} +\two{M_l}\Omega\one{M_k}-\Omega\one{M_k}\two{M_l}-
\two{M_l}\one{M_k}\Omega\right)\right] .
\nn
\end{eqnarray}
This is a rather long computation. To simplify it we introduce a
graphic representation (this will be useful also in the proof of
Theorem \ref{th:main}) for the restriction of the
Korotkin--Samtleben bracket (\ref{eq:KSP}) to traces of products of
matrices. We represent the term of type
$$
\one M_i \two M_j \Omega \one M_k \two M_l
$$
as in Fig. \ref{pic:KS}.

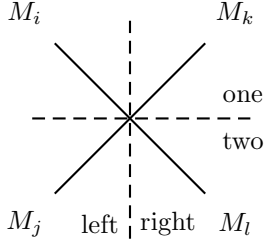
\begin{figure}[h]
\begin{pspicture}(-1.5,-1.5)(1.5,1.5)
 \psline(-1,-1)(1,1)
 \psline[linestyle=dashed](-1.3,0)(1.6,0)
 \psline[linestyle=dashed](0,-1.6)(0,1.3)
 \psline(-1,1)(1,-1)
 \rput(1.5,0.3){one}
 \rput(1.5,-0.3){two}
 \rput(-0.4,-1.4){left}
 \rput(0.5,-1.4){right}
 \rput(-1.4,1.4){$M_i$}
 \rput(-1.4,-1.4){$M_j$}
 \rput(1.4,1.4){$M_k$}
 \rput(1.4,-1.4){$M_l$}
\end{pspicture}
\caption{Graphic representation of the  Korotkin--Samtleben bracket.
The horizontal dashed line separates the spaces one (on top) and two (bottom).
The vertical dashed line divides left from right:
matrices at the left (resp. right) of $\Omega$ are on the left
(resp. right) half plane. The two diagonal lines represent the
freedom of transferring matrices trough the exchange matrix.}
\label{pic:KS}
\end{figure}

The trace is obtained by mapping all matrices to the right (or to
the left) through the diagonal lines and taking the trace of the
product of the contribution in space one with the contribution in
space two (see examples in Fig. \ref{pic:KSTR} and \ref{pic:KS2}).

\begin{figure}[h]
\begin{pspicture}(-4.5,-1.5)(-2.5,1.5)
 \psline(-1,-1)(1,1)
 \psline[linestyle=dashed](-1.3,0)(1.3,0)
 \psline[linestyle=dashed](0,-1.3)(0,1.3)
 \psline(-1,1)(1,-1)
 \rput(1.4,1.4){$M_j M_k$}
 \rput(1.4,-1.4){$M_i M_l$}
 \end{pspicture}
 \rput(0,1.5){$\rightarrow$}
 \begin{pspicture}(2.5,-1.5)(4.5,1.5)
 \psline(-1,-1)(1,1)
 \psline[linestyle=dashed](-1.3,0)(1.3,0)
 \psline[linestyle=dashed](0,-1.3)(0,1.3)
 \psline(-1,1)(1,-1)
 \rput(-1.4,1.4){$M_i$}
 \rput(-1.4,-1.4){$M_j$}
 \rput(1.4,1.4){$M_k$}
 \rput(1.4,-1.4){$M_l$}
\end{pspicture}
\caption{Graphic representation of the relation  $\onetwo\Tr\left(\one M_i \two
M_j \Omega \one M_k \two M_l\right)=
 \Tr\left(M_i M_l M_j M_k\right)$. }
\label{pic:KSTR}
\end{figure}

\begin{figure}[h]
\begin{pspicture}(.5,-1.5)(2.5,1.5)
 \psline(-1,-1)(1,1)
 \psline[linestyle=dashed](-1.3,0)(1.3,0)
 \psline[linestyle=dashed](0,-1.3)(0,1.3)
 \psline(-1,1)(1,-1)
 \rput(-1.4,1.4){$M_i^nM_k$}
 \rput(-1.4,-1.4){$M_j$}
 \rput(1.4,1.4){$\ID$}
 \rput(1.4,-1.4){$M_l$}
\end{pspicture}
 \rput(0,1.5){$\rightarrow$}
\begin{pspicture}(-2.5,-1.5)(-.5,1.5)
 \psline(-1,-1)(1,1)
 \psline[linestyle=dashed](-1.3,0)(1.3,0)
 \psline[linestyle=dashed](0,-1.3)(0,1.3)
 \psline(-1,1)(1,-1)
 \rput(1.4,1.4){$M_j$}
 \rput(1.4,-1.4){$M_i^n M_k M_l$}
  \end{pspicture}
\caption{Graphic representation of the relation $\onetwo\Tr\left(\one M_i^n\two
M_j \one M_l\Omega\two M_k\right) =\Tr\left(M_i^nM_kM_l
M_j\right)$.}
\label{pic:KS2}
\end{figure}

Using this graphic representation we immediately obtain that the
first two lines on the right hand side of (\ref{eq:pbproof}) cancel
each other and:
$$
\left\{G_{i,k},G_{j,l}\right\}= \Tr\left(M_i M_j M_l M_k +
M_j M_i M_k M_l-M_l M_i M_k M_j- M_i M_l M_j M_k
\right).
$$
Since $M_i^{-1}=-M_i$ and $\Tr M_i=0$ for all $i=1,\dots,n$ by the
skein relation (\ref{eq:skein}) we obtain the final result. In fact
\begin{eqnarray}
&&
 \Tr\left(M_i M_j M_l M_k\right) = \Tr\left( M_i M_j \right) \Tr\left( M_l M_k \right)-
  \Tr\left( M_iM_j  M_k M_l\right)
  \nn\\
&&  \Tr\left(M_j M_i M_k M_l\right) = \Tr\left( M_i M_j \right) \Tr\left( M_l M_k \right)-
  \Tr\left( M_i M_j  M_k M_l\right)\nn\\
  &&
  \Tr\left(M_l M_i M_k M_j\right) = \Tr\left( M_i M_l\right) \Tr\left( M_jM_k \right)-
  \Tr\left( M_l M_i  M_j M_k\right)\nn\\
  &&
  \Tr\left(M_i M_   l M_j M_k\right) = \Tr\left( M_i M_l\right) \Tr\left( M_j M_k \right)-
  \Tr\left( M_l M_i  M_j M_k\right).\nn
  \end{eqnarray}
  The other relations can be obtained in a similar way.
\end{proof}

\section{Clashing of poles}\label{se:clash}

In this section we consider a Fuchsian system of the form
(\ref{N1in}), with monodromy matrices $M_1,\dots,M_n$ as was described
in Sec.~\ref{se:schl}, and study its relation with a
new Fuchsian system
\begin{equation}
{{\rm d}\over{\rm d}\lambda}\ttilde\Phi=\left(
\sum_{k=1}^{\ttilde n}{\ttilde{A}_k\over \lambda-u_k} \right)\ttilde \Phi,
\label{eq:clash}
\end{equation}
with $\ttilde n=n-m+1$  for some positive integer $m<n$, monodromy matrices
$\ttilde M_1,\dots,\ttilde M_{\ttilde n}$, where
$$
\ttilde M_i=M_i, \quad\hbox{for } i=1,\dots, \ttilde n-1,\quad\hbox{and }
\ttilde M_{\ttilde n}=M_{n-m+1}\dots M_{n-1} M_n.
$$
The main idea is that system (\ref{eq:clash}) is obtained from
system (\ref{N1in}) by clashing $m$ poles \cite{JMS}. To this aim we
set
$$
\ttilde{\bf u}:=(u_1,\dots,u_{\ttilde n-1}),
$$
$$
u_j:=t v_j, \quad j=\ttilde n,\dots,\ttilde n+m-1=n
$$
and
\begin{eqnarray}\label{eq:AtoB}
&&
A_i(\ttilde{\bf u},t):= A_i(\ttilde{\bf u}, t v_{\ttilde n},\dots, t v_{n}), \quad\hbox{for }
i=1,\dots,\ttilde n-1,\nn\\
&&
\\
&&
B_j(\ttilde{\bf u},t):= A_{\ttilde n-1+j}(\ttilde{\bf u}, t v_{\ttilde n},\dots, t v_{n}),
\quad\hbox{for } j=1,\dots,m.\nn
\end{eqnarray}
The Schlesinger equations in the variable $t$ become:
\begin{equation}\label{eq:sch-red}
\frac{\partial A_i}{\partial t} = \sum_{j=1}^m  \frac{v_j}{t v_j-u_i}[B_j,A_i],\qquad
\frac{\partial B_j}{\partial t} =\frac{1}{t}\sum_{k\neq j} [B_k,B_j]
-\sum_{i=1}^{\ttilde n-1}  \frac{v_j}{t v_j-u_i}[B_j,A_i].
\end{equation}

The following theorem gives the conditions under which system
(\ref{eq:clash}) can be obtained as the limit for $t\to 0$ of system
(\ref{N1in}). We state it in full generality, namely for any dimension $m$ of the Fuchsian systems involved.

\begin{theorem}\cite{JMS}\label{th:JMS}
Let $A_1^0,\dots,A_{\ttilde n-1}^0,B_1^0,\dots,B_m^0$ be some constant  in $t$ matrices such that
$$
\Lambda:=\sum_{j=1}^m B_j^0
$$
has eigenvalues $\lambda_1,\dots,\lambda_n$ such that the following technical assumption is satisfied:
\be\label{eq:techn}
\thi:=\max_{i,j=1,\dots,m}\left\{|\Re(\lambda_i)-\Re(\lambda_j) |\right\})\in[0,1[
\ee
and let $K$ be a constant such that
$$
| A_i^0|<K \hbox{ for } i=1,\dots,\ttilde n-1,\qquad
| B_j^0|<K \hbox{ for } j=1,\dots,m,\qquad
$$
then the following three statements are true:

For any $\varphi$, there exists an $\varepsilon >0$ such that the Schlesinger equations
(\ref{eq:sch-red}) admits a unique solution in the sector
$\{t\in\mathbb C, \, |t|<\varepsilon,\, \arg(t)<\varphi\}$ such that
the following estimates on the asymptotic behavior hold true:
\begin{equation}\label{eq:estimates1}
|A_i(t)- A_i^0|\leq K |t |^{1-\sigma} \quad
\left| t^{-\Lambda}(A_i(t)-A_i^0) t^\Lambda\right| \leq K |t|^{1-\sigma},
\end{equation}
\begin{equation}\label{eq:estimates2}
\left|t^{-\Lambda}B_j(t)t^\Lambda-B_j^0 \right|\leq K |t|^{1-\sigma}
\end{equation}
for some $\sigma$ such that $1>\sigma>\thi$.

Let $\Phi(\lambda, t)$ be the corresponding solution of the system
(\ref{N1in}) normalized at infinity. Then the limit
$\ttilde\Phi(\lambda):= \lim_{t\to 0} \Phi(\lambda,t)$ exists and it
satisfies the system (\ref{eq:clash}) with
\begin{equation}
\label{eq:tildesystem}
u_{\ttilde n}=0,\quad
\ttilde A_i = A_i^0\hbox{ for } i=1,\dots,\ttilde n-1,\quad\hbox{and }\,
\ttilde A_{\ttilde n}=\Lambda.
\end{equation}

The corresponding monodromy matrices of the system (\ref{eq:clash}) under the conditions (\ref{eq:tildesystem}) are
$\ttilde M_1,\dots,\ttilde M_{\ttilde n}$, where
$$
\ttilde M_i=M_i, \quad\hbox{for } i=1,\dots, \ttilde n-1,\quad\hbox{and }\,
\ttilde M_{\ttilde n}=M_{n-m+1}\dots M_{n-1} M_n.
$$
\end{theorem}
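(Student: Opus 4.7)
The plan is to follow the classical strategy of Jimbo--Miwa--Sato: absorb the singular $t\to 0$ behavior of the coalescing residues by a conjugation (gauge) transformation, recast the Schlesinger system as a regular Volterra-type integral equation, and then solve it by a contraction argument in a suitable Banach space of sectorial holomorphic functions.

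\textbf{Step 1 (Gauge transformation).} I would introduce new unknowns
\[
\widehat A_i(t):=A_i(t)-A_i^0,\qquad \widehat B_j(t):=t^{-\Lambda}B_j(t)\,t^{\Lambda}-B_j^0,
\]
so that the statement to prove becomes that $(\widehat A_i,\widehat B_j)$ are $O(|t|^{1-\sigma})$ on a sector. Rewriting \eqref{eq:sch-red} in these variables and integrating from $0$ along a ray in the sector converts the Schlesinger equations into a system of Volterra-type fixed-point equations
\[
\widehat A_i(t)=\int_0^t \mathcal F_i\!\left(s,\widehat A,\widehat B\right)\mathrm ds,\qquad \widehat B_j(t)=\int_0^t \mathcal G_j\!\left(s,\widehat A,\widehat B\right)\mathrm ds,
\]
where the integrands are built from the right-hand sides of \eqref{eq:sch-red} after the conjugation by $t^{\Lambda}$. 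The essential point of the gauge transformation is that the potentially singular term $t^{-1}\sum_{k\ne j}[B_k,B_j]$ is cancelled exactly by the derivative of $t^{\Lambda}(\cdot)t^{-\Lambda}$, because $\sum_j B_j^0=\Lambda$; this is the analogue of the standard trick for a single Fuchsian singularity with leading term $\Lambda/\lambda$.

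\textbf{Step 2 (Contraction estimate).} Work in the Banach space
\[
\mathcal{X}_{\varepsilon,\varphi,\sigma}:=\Bigl\{f\text{ holomorphic on }\{|t|<\varepsilon,\ \arg t<\varphi\}:\ \|f\|:=\sup |t|^{-(1-\sigma)}|f(t)|<\infty\Bigr\}.
\]
For an element $s^{\Lambda}X s^{-\Lambda}$ with $\|X\|\le C$, the technical hypothesis $\vartheta<\sigma<1$ of \eqref{eq:techn} yields $|s^{\Lambda}Xs^{-\Lambda}|\le C'|s|^{-\vartheta}\|X\|$, so that integrating against $\mathrm ds/s$ produces a factor $|t|^{1-\vartheta}$, which is $o(|t|^{1-\sigma})$ times a small factor once $\varepsilon$ is small. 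Combined with the boundedness of $1/(tv_j-u_i)$ on the sector (the $u_i$ are fixed and nonzero), a routine estimate shows that the integral operator is a contraction on a ball of radius $K$ in $\mathcal{X}_{\varepsilon,\varphi,\sigma}$ once $\varepsilon$ is chosen small enough depending on $\varphi,K,\sigma$. The unique fixed point gives the solution and directly yields \eqref{eq:estimates1}--\eqref{eq:estimates2}.

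\textbf{Step 3 (Limit of $\Phi$ and identification of the limiting system).} Plug the estimates into \eqref{N1in}: on compacta of $\mathbb{P}^1\setminus\{0,u_1,\dots,u_{\tilde n-1},\infty\}$ one has
\[
\sum_{j=1}^m\frac{B_j(t)}{\lambda-tv_j}=\frac{1}{\lambda}\sum_{j=1}^m B_j(t)+O(|t|)=\frac{\Lambda}{\lambda}+O(|t|^{1-\sigma}),
\]
while the remaining terms tend to $A_i^0/(\lambda-u_i)$. Standard ODE continuity in parameters, using the normalization of $\Phi$ at $\infty$, gives the existence of $\tilde\Phi(\lambda)=\lim_{t\to 0}\Phi(\lambda,t)$ and shows it satisfies \eqref{eq:clash} with residues as in \eqref{eq:tildesystem}.

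\textbf{Step 4 (Monodromy).} Fix the base point at $\infty$ and, for small $|t|$, the $m$ coalescing poles $tv_{\tilde n},\dots,tv_n$ all lie inside a small disk $\mathcal{D}$ disjoint from the other $u_i$'s. By homotopy in $\mathbb{P}^1\setminus\{u_1,\dots,u_n\}$, a loop around $\mathcal{D}$ is homotopic to the composition of the individual loops around $tv_{\tilde n},\dots,tv_n$ in that order, hence its monodromy equals $M_{n-m+1}\cdots M_n$. By Step 3 this loop becomes, in the limit, the loop around $u_{\tilde n}=0$ for the system \eqref{eq:clash}, which therefore has monodromy $\tilde M_{\tilde n}=M_{n-m+1}\cdots M_n$. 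For $i<\tilde n$ the loops are unaffected, giving $\tilde M_i=M_i$.

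The main obstacle is \textbf{Step 2}: one must choose the norm and the sector so that the integral operator is genuinely contractive, carefully tracking how the bound $|t^{\Lambda}Xt^{-\Lambda}|\lesssim|t|^{-\vartheta}\|X\|$ interacts with the integration of $\mathrm ds/s$. The hypothesis $\vartheta\in[0,1)$ is exactly what makes this work, and any slackness in the estimates destroys the convergence of the Picard iteration. The rest of the argument is either standard ODE dependence on parameters or an elementary homotopy computation.
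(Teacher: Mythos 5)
The paper offers no proof of this statement: it is quoted verbatim from Jimbo--Miwa--Sato \cite{JMS}, and your proposal is a faithful reconstruction of exactly the strategy used there (gauge away the $t^{-1}[\,\cdot\,,\cdot\,]$ term by conjugating with $t^{\Lambda}$, set up a Volterra fixed-point problem in a sectorial weighted sup-norm, contract, then pass to the limit in the linear system and track monodromy by homotopy). So in substance you are reproving the cited result by its original method, and the outline is sound. One refinement you should build in explicitly: the two inequalities in (\ref{eq:estimates1}) are \emph{independent} pieces of information, since a bound on $|A_i(t)-A_i^0|$ does not imply one on $|t^{-\Lambda}(A_i(t)-A_i^0)t^{\Lambda}|$ (conjugation by $t^{\Lambda}$ costs a factor $|t|^{-\vartheta}$, possibly with logarithms if $\Lambda$ is not semisimple). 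Your Banach space $\mathcal X_{\varepsilon,\varphi,\sigma}$ with the single norm $\sup|t|^{-(1-\sigma)}|f(t)|$ therefore cannot deliver both estimates; the Picard iteration must be run on the vector of unknowns $\bigl(A_i-A_i^0,\ t^{-\Lambda}(A_i-A_i^0)t^{\Lambda},\ t^{-\Lambda}B_jt^{\Lambda}-B_j^0\bigr)$ with the norm taken as the maximum over all components, precisely because the right-hand sides of (\ref{eq:sch-red}) couple the conjugated and unconjugated quantities. With that adjustment the contraction closes under the hypothesis (\ref{eq:techn}), and Steps 3 and 4 are standard as you describe.
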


In our case, i.e. when system (\ref{N1in}) comes from the $A_n$
algebra, i.e. the monodromy matrices are given by (\ref{eq:mon-d})
and (\ref{eq:basisn}), we are always able to clash odd numbers of
poles. In fact for an odd number $m=n-\ttilde n+1$,
$$
\Tr(M_{n-m+1}\dots M_{n-1} M_n)=
\Tr (\gamma_{n-m+1}\dots \gamma_{n-1}\dots \gamma_{n})=2 \cos(\pi \thi)
$$
is always a real number bigger than $2$. This implies that $\thi$ is purely imaginary, hence
$\Re(\thi)=0$ and the hypotheses of theorem \ref{th:JMS} are satisfied.

\section{$\mathfrak{D}_n$ algebras}\label{se:newDn}

In this section, we interpret system (\ref{eq:clash}) as a system on
an annulus with $n$ marked points, the hole at the center of the
annulus being the result of clashing $m$ poles. For convenience we
change our notation: we start with a system with $n+m$ poles, clash
$m$ of them and we call the final number of orbifold points $n$.  We
denote the monodromy matrix around the hole as
$$
M_h := M_{n+1} \dots M_{n+m-1}  M_{n+m}.
$$
Now the paths that join the  points $u_i$ and $u_j$ winding $k$
times (possibly with self-intersections) around the new hole
perimeter will contribute to the Poisson algebra. The traces of the
corresponding elements in the Fuchsian group are:
\be
\label{eq:Gijk}
G_{i,j}^{(k)}:=-\Tr\left(M_i M_h^k M_j M_h^{-k}\right).
\ee
Note that
\be\label{eq:Gijk1}
G_{i,j}^{(k)} = G_{j,i}^{(-k)},
\ee
so that in particular the {\it level $0$ elements}\/ $G_{ij}^{(0)}$ are symmetric and
\be\label{eq:Gijk2}
G_{i,i}^{(0)}= 2.
\ee

The Poisson algebra for the elements $G_{ij}^{(k)}$ is described in the following:

\begin{theorem}\label{th:main}
The geodesic length functions $G_{ij}^{(k)}$ satisfy the following Poisson relations,
for $0\le k$
\bea\label{eq:newDn}
\{G_{j,i}^{(0)},G_{p,l}^{(k)}\}&=&\bigl(\epsilon(j-l)-\epsilon(i-l)\bigr)(G_{l,i}^{(0)}G_{p,j}^{(k)}
-G_{l,j}^{(0)}G_{p,i}^{(k)}) +\\
&&+ \bigl(\epsilon(j-p)-\epsilon(i-p)\bigr)(G_{p,i}^{(0)}G_{j,l}^{(k)}
-G_{p,j}^{(0)}G_{i,l}^{(k)})\nn
\eea
and for $0<m\leq k$:
\bea\label{eq:newDn1}
&{}&\left\{G_{j,i}^{(m)},G_{p,l}^{(k)}\right\}=
\nonumber
\\
&{}&
\epsilon(i-l)(G^{(k)}_{p,i}G^{(m)}_{j,l}-G^{(0)}_{i,l}G^{(k-m)}_{p,j})
+\epsilon(i-p)(G^{(m)}_{j,p}G^{(k)}_{i,l}-G^{(0)}_{i,p}G^{(k+m)}_{j,l})
\nonumber
\\
&{}&
+\epsilon(j-l)(G^{(k)}_{p,j}G^{(m)}_{l,i}-G^{(0)}_{j,l}G^{(k+m)}_{p,i})
+\epsilon(j-p)(G^{(m)}_{p,i}G^{(k)}_{j,l}-G^{(0)}_{j,p}G^{(k-m)}_{i,l})
\nonumber
\\
&{}&+\Bigl[
G^{(k+m)}_{p,i}G^{(0)}_{j,l}+2G^{(k+m-1)}_{p,i}G^{(1)}_{j,l}+\cdots
+2G^{(k+1)}_{p,i}G^{(m-1)}_{j,l}+G^{(k)}_{p,i}G^{(m)}_{j,l}\Bigr.
\\
&{}&
-G^{(m)}_{p,i}G^{(k)}_{j,l}-2G^{(m-1)}_{p,i}G^{(k+1)}_{j,l}-\cdots
-2G^{(1)}_{p,i}G^{(k+m-1)}_{j,l}-G^{(0)}_{p,i}G^{(k+m)}_{j,l}
\nonumber
\\
&{}&
+G^{(k-m)}_{i,l}G^{(0)}_{j,p}+2G^{(k-m+1)}_{i,l}G^{(1)}_{j,p}+\cdots
+2G^{(k-1)}_{i,l}G^{(m-1)}_{j,p}+G^{(k)}_{i,l}G^{(m)}_{j,p}
\nonumber
\\
&{}&\Bigl.
-G^{(0)}_{l,i}G^{(k-m)}_{p,j}-2G^{(1)}_{l,i}G^{(k-m+1)}_{p,j}-\cdots
-2G^{(m-1)}_{l,i}G^{(k-1)}_{p,j}-G^{(m)}_{l,i}G^{(k)}_{p,j}\Bigr].
\nonumber
\eea
where $\epsilon$ denotes the sign function ($\epsilon(x)=\{-1,x<0;\ 0,x=0;\ 1,x>0\}$).
We introduce the generating function
\be
{\mathcal G}_{i,j}(\lambda):= {\mathcal A}^{(0)}_{i,j}+\sum_{k=1}^\infty G_{i,j}^{(k)}\lambda^{-k},
\label{G-def}
\ee
where ${\mathcal A}^{(0)}$ is an upper-triangular matrix with the entries
\be\label{eq:MatA}
{\mathcal A}^{(0)}_{i,j}=\left\{\begin{array}{l}
G^{(0)}_{i,j}\qquad\hbox{for}\quad i<j\\
1 \qquad\hbox{for}\quad i=j\\
0 \qquad\hbox{for}\quad i>j.\\
\end{array}
\right.
\ee
The Poisson bracket then becomes
\begin{eqnarray}
\left\{{\mathcal G}_{j,i}(\lambda),{\mathcal G}_{p,l}(\mu)\right\}&=&
\left(\epsilon(j-p)-\frac{\lambda+\mu}{\lambda-\mu}\right){\mathcal G}_{p,i}(\lambda) {\mathcal G}_{j,l}(\mu)+
\nn\\
&&
+\left(\epsilon(i-l)+\frac{\lambda+\mu}{\lambda-\mu}\right) {\mathcal G}_{p,i}(\mu) {\mathcal G}_{j,l}(\lambda)+
\nn\\
&&
+\left(\epsilon(i-p)-\frac{1+\lambda\mu}{1-\lambda\mu}\right)
{\mathcal G}_{j,p}(\lambda){\mathcal G}_{i,l}(\mu)+\nn\\
&&+
\left(\epsilon(j-l)+\frac{1+\lambda\mu}{1-\lambda\mu}  \right){\mathcal G}_{l,i}(\lambda){\mathcal G}_{p,j}(\mu)
\label{eq:Yangian}
\end{eqnarray}
This is an abstract infinite-dimensional Poisson algebra.
\end{theorem}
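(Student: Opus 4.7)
The proof proceeds by direct computation starting from the Korotkin--Samtleben bracket (\ref{eq:KSP}), extended to products by the Leibniz rule, combined with the skein relation (\ref{eq:skein}). First I would establish the auxiliary bracket formulas $\{M_i\otim_,M_h\}$, $\{M_h\otim_,M_h\}$, and the telescoped iterate $\{M_h^m\otim_,M_h^k\}$, which follow by iterating (\ref{eq:KSP}) and summing over the internal factors of $M_h=M_{n+1}\cdots M_{n+m}$. Next, writing
$$
\{G_{j,i}^{(m)},G_{p,l}^{(k)}\}=\{\Tr(M_jM_h^mM_iM_h^{-m}),\Tr(M_pM_h^kM_lM_h^{-k})\},
$$
I would expand by the Leibniz rule into a sum of terms involving $\Omega$-insertions, then use the graphic calculus from the proof of Lemma~\ref{lm:techKS} (moving matrices through the diagonal lines) to collapse each insertion into a single trace.

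The resulting traces are products of $M_i,M_j,M_p,M_l$ interleaved with powers of $M_h$, and since the external $M_i$ satisfy $M_i^2=-I$ and $\Tr M_i=0$, each four-fold trace factorizes via the skein relation into a quadratic expression in the $G^{(*)}_{*,*}$. The sign functions $\epsilon(\cdot)$ on the right-hand sides of (\ref{eq:newDn}) and (\ref{eq:newDn1}) arise directly from the Korotkin--Samtleben case distinction between the $\{M_i\otim_,M_j\}$ and $\{M_i\otim_,M_i\}$ prescriptions, which depends on the cyclic order of indices on the puncture boundary. The contributions that do not mix levels (coming from brackets with $M_i,M_j,M_p,M_l$ alone and not passing through any copy of $M_h$) assemble into (\ref{eq:newDn}); the level-mixing terms arise precisely from the KS insertions at each of the $m$ internal $M_h$ inside $G^{(m)}_{j,i}$ and each of the $k$ inside $G^{(k)}_{p,l}$.

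The main obstacle is to obtain the alternating $1,2,2,\dots,2,1$ coefficients in the eight blocks of (\ref{eq:newDn1}). I expect these to arise from the observation that an $\Omega$ inserted on an intermediate $M_h$ can be slid to the left or to the right and reassembled as $G^{(a)}_{*,*}G^{(b)}_{*,*}$ with $a+b=k\pm m$; boundary insertions can only go one way (giving multiplicity $1$) whereas interior insertions can go either way (giving multiplicity $2$). I would verify this combinatorially by induction on $m$, using the conjugation relation $\{M_hXM_h^{-1}\otim_,Y\}=M_h\{X\otim_,Y\}M_h^{-1}+[\text{KS corrections involving }\{M_h,Y\}]$. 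A careful check of signs, sector by sector in the ordering of $i,j,p,l$, then matches the eight blocks exactly.

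Finally, the generating function identity (\ref{eq:Yangian}) follows by formal-series manipulation: the expansions
$$
1+2\sum_{k\ge 1}\!\left(\frac{\mu}{\lambda}\right)^{\!k}=\frac{\lambda+\mu}{\lambda-\mu},\qquad 1+2\sum_{k\ge 1}(\lambda\mu)^k=\frac{1+\lambda\mu}{1-\lambda\mu}
$$
reproduce precisely the $1,2,\dots,2,1$ patterns, and collecting the coefficient of $\lambda^{-m}\mu^{-k}$ in the right-hand side of (\ref{eq:Yangian}) recovers (\ref{eq:newDn1}) for $m,k\ge 1$ and (\ref{eq:newDn}) for $m=0$. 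The two kernels reflect the two ways $M_h^{\pm m}$ can pair with $M_h^{\pm k}$, consistent with the twisted/reflection structure. The Jacobi identity is not automatic from Korotkin--Samtleben and is deferred to Appendix~B, where it is verified directly on the $G^{(k)}_{i,j}$.
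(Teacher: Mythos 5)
Your proposal is correct and follows essentially the same route as the paper: extend the Korotkin--Samtleben bracket to $M_h$ and its powers (the paper's Proposition~\ref{lem-merging}), expand $\{\Tr(M_jM_h^mM_iM_h^{-m}),\Tr(M_pM_h^kM_lM_h^{-k})\}$ by the Leibniz rule, collapse the $\Omega$-insertions with the graphic calculus of Lemma~\ref{lm:techKS}, factorize the resulting traces by the skein relation, check the generating-function form by the same geometric-series expansions, and defer the Jacobi identity to Appendix~B. Your added heuristic for the $1,2,\dots,2,1$ coefficients (boundary versus interior $\Omega$-insertions) is a reasonable gloss on what the paper dismisses as ``a rather long but straightforward computation,'' but it does not constitute a different method.
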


Note that al level zero the relation (\ref{eq:Gijk1}) produces the Nelson--Regge
algebra (\ref{eq:NR})  for $G_{ij}^{(0)}$.

Before proving this theorem it is important to stress that the
number $m$ of clashed poles does not appear in the formulae. Indeed,
if we consider the elements $G^{(k)}_{i,j}$ as
infinitely many independent elements, the brackets (\ref{eq:newDn}) and
(\ref{eq:newDn1}) define a Poisson algebra satisfying the Jacobi
identity.

\begin{df}
We call the Poisson relations  (\ref{eq:newDn}) and (\ref{eq:newDn1})
the ${\mathfrak D}_n$-{\em algebra} of the elements $G^{(k)}_{i,j}$.
\end{df}

In Subsec.~\ref{subse:yangian} we prove that $\mathfrak D_n$
is the semiclassical limit of the twisted $q$--Yangian
$Y'_q(\mathfrak{o}_n)$ for the orthogonal Lie algebra
$\mathfrak{o}_n$ introduced in \cite{MRS}. This gives an alternative
proof to the Jacobi identity.

Geometrically, we interpret the  ${\mathfrak D}_n$ algebra as the
algebra of geodesic length functions on a annulus with $n$ marked
points. To understand this it is enough to look at the geometric
construction of a hole by clashing two poles.\footnote{From a
geometric view point we have no constraint on the number of clashed
poles. The constraint that $m$ needs to be odd arises when we want
to carry out the clashing in the analytic framework.}

\subsection{Example: Clashing two poles}

Observe first that the product of two traceless elements  is a
hyperbolic element. This is consistent with the fact that by clashing
two points we get a hole. We begin with the algebra $A_{n+2}$ and
interpret the element $M_h=M_{n+1}M_{n+2}$ as an element
corresponding to going around a new hole. We consider the Fuchsian
group ${\Delta}_{0,2,n}$ generated by $M_h$ and $M_i$,
$i=1,\dots,n$. Obviously, the group thus constructed is a subgroup
of $\Delta_{0,1,n+2}$, the Fuchsian group in the $A_{n+2}$ case.

When clashing two poles $u_{n+1}$ and $u_{n+2}$ we create a hole
thus obtaining a new hole perimeter, the loop containing $u_{n+1}$
and $u_{n+2}$. As a consequence, we can consider the subgraph in the
right-hand side of Fig.~\ref{fi:An-Dn} instead of the tree-like
subgraph in the left-hand side.
\begin{figure}[h]
{\psset{unit=0.7}
\begin{pspicture}(-7,-3.5)(7,1)
\newcommand{\FORK}{%
\pcline[linewidth=1pt](-1,1)(1,1)
\pcline[linewidth=1pt](-1,0)(-0.5,0)
\pcline[linewidth=1pt](1,0)(0.5,0)
\pcline[linewidth=1pt](-0.5,0)(-0.5,-1)
\pcline[linewidth=1pt](0.5,0)(0.5,-1)
\pcline[linewidth=1pt](-0.5,-1)(-1.5,-2)
\pcline[linewidth=1pt](0.5,-1)(1.5,-2)
\pcline[linewidth=1pt](0,-1.7)(-0.9,-2.6)
\pcline[linewidth=1pt](0,-1.7)(0.9,-2.6)
\pcline[linewidth=1pt](-0.3,-1.4)(-1.2,-2.3)
\pcline[linewidth=1pt](0.3,-1.4)(1.2,-2.3)
\psarc[linewidth=1pt](0,-1.7){.42}{45}{135}
\pscircle*(-1.2,-2.3){.1}
\pscircle*(1.2,-2.3){.1}
\pcline[linecolor=red, linestyle=dashed, linewidth=1pt](-1,.2)(-0.3,.2)
\pcline[linecolor=red, linestyle=dashed, linewidth=1pt](1,.2)(0.3,.2)
\pcline[linecolor=red, linestyle=dashed, linewidth=1pt](-0.3,.2)(-0.3,-1.1)
\pcline[linecolor=red, linestyle=dashed, linewidth=1pt](0.3,.2)(0.3,-1.1)
\pcline[linecolor=red, linestyle=dashed, linewidth=1pt](-0.3,-1.1)(-1.35,-2.15)
\pcline[linecolor=red, linestyle=dashed, linewidth=1pt](0.3,-1.1)(1.35,-2.15)
\pcline[linecolor=red, linestyle=dashed, linewidth=1pt](-0.15,-1.55)(-1.05,-2.45)
\pcline[linecolor=red, linestyle=dashed, linewidth=1pt](0.15,-1.55)(1.05,-2.45)
\psarc[linecolor=red, linestyle=dashed, linewidth=1pt](0,-1.7){.2}{45}{135}
\psarc[linecolor=red, linestyle=dashed, linewidth=1pt](-1.2,-2.3){.22}{135}{315}
\psarc[linecolor=red, linestyle=dashed, linewidth=1pt](1.2,-2.3){.22}{-135}{45}
\rput(-0.6,-0.5){\makebox(0,0)[rc]{$Y$}}
\rput(-1.4,-1.8){\makebox(0,0)[rb]{$Z_{n+1}$}}
\rput(1.4,-1.8){\makebox(0,0)[lb]{$Z_{n+2}$}}
}
\newcommand{\SPOON}{%
\pcline[linewidth=1pt](-1,1)(1,1)
\pcline[linewidth=1pt](-1,0)(-0.5,0)
\pcline[linewidth=1pt](1,0)(0.5,0)
\pcline[linewidth=1pt](-0.5,0)(-0.5,-1.1)
\pcline[linewidth=1pt](0.5,0)(0.5,-1.1)
\psarc[linewidth=1pt](0,-2.5){1.5}{-250}{70}
\pscircle[linewidth=1pt](0,-2.5){.5}
\pcline[linecolor=red, linestyle=dashed, linewidth=1pt](-1,.2)(-0.3,.2)
\pcline[linecolor=red, linestyle=dashed, linewidth=1pt](1,.2)(0.3,.2)
\pcline[linecolor=red, linestyle=dashed, linewidth=1pt](-0.3,.2)(-0.3,-1.3)
\pcline[linecolor=red, linestyle=dashed, linewidth=1pt](0.3,.2)(0.3,-1.3)
\psarc[linecolor=red, linestyle=dashed, linewidth=1pt](0,-2.5){1.24}{-256}{76}
\rput(-0.6,-0.5){\makebox(0,0)[rc]{$Y_h$}}
\rput(-1.2,-1.3){\makebox(0,0)[rb]{$Z_{h}$}}
}
\rput(-3,0){\FORK}
\rput(3,0){\SPOON}
\rput(0,-.5){\makebox(0,0){$\equiv$}}
\end{pspicture} }
\caption{\small }
\label{fi:An-Dn}
\end{figure}

The product of matrices
corresponding to paths that go around $u_{n+1}$ and $u_{n+2}$  in
the left-hand side is then preserved: we have the matrix equality
$$
X_YRX_{Z_{n+2}}FX_{Z_{n+2}}RX_{Z_{n+1}}FX_{Z_{n+1}}RX_Y=X_{Y_h}RX_{Z_h}RX_{Y_h}.
$$
This equality holds provided the new coordinates $Z_h$ and $Y_h$ are defined
as follows:
\bea
&&
Y+Z_{n+1}+Z_{n+2}=Y_h+\frac{Z_h}{2},
\label{rel1}
\\
&&
G_{n+1,n+2}=e^{Z_{n+1}+Z_{n+2}}+e^{-Z_{n+1}-Z_{n+2}}+e^{Z_{n+1}-Z_{n+2}}=e^{Z_h/2}+e^{-Z_h/2}.
\label{rel2}
\eea
Obviously, $G_{n+1,n+2}$ commutes with all elements of the (sub)group ${\Delta}_{0,2,n}$.

To show that  the Fuchsian group
${\Delta}_{0,2,n}$ generated by $M_h$ and $M_i$, $i=1,\dots,n$ can be seen as a subgroup of
$\Delta_{0,1,n+2}$, the Fuchsian group in the $A_{n+2}$ case, we show that  the elements
${G}^{(k)}_{ij}$ can be expressed in terms of elements in $\Delta_{0,1,n+2}$. For example:
\be
{G}^{(1)}_{ij}=G_{i,n+2}G_{n+1,n+2}G_{j,n+1}-G_{i,n+1}G_{j,n+1}-G_{i,n+2}G_{j,n+2}+G_{i,j},
\label{C2}
\ee
where (\ref{C2}) follows from the skein relation (\ref{eq:skein}), see figure \ref{fig:sk}.

\begin{figure}
{\psset{unit=0.5}
\begin{pspicture}(-7,-5)(7,3)
\newcommand{\BASE}{%
\pcline[linecolor=green, linestyle=dashed, linewidth=1pt](-2.5,0)(2.5,0)
\pscircle*(-2,0){0.1}
\psarc[linewidth=0.5pt](-2,0){0.3}{-180}{0}
\rput(-2,-0.5){\makebox(0,0)[ct]{\tiny$i$}}
\pscircle*(-1,0){0.1}
\psarc[linewidth=0.5pt](-1,0){0.3}{-180}{0}
\rput(-1,-0.5){\makebox(0,0)[ct]{\tiny$j$}}
\pscircle*(1,0){0.1}
\rput(1,-0.5){\makebox(0,0)[ct]{\tiny $n{+}1$}}
\pscircle*(2,0){0.1}
\rput(2,-0.5){\makebox(0,0)[ct]{\tiny $n{+}2$}}
}
\rput(-10,0){\BASE}
\psarc[linewidth=0.5pt](-10.05,0){.65}{0}{180}
\psarc[linewidth=0.5pt](-10.05,0){1.25}{0}{180}
\psarc[linewidth=0.5pt](-9.95,0){1.75}{0}{180}
\psarc[linewidth=0.5pt](-9.95,0){2.35}{0}{180}
\psarc[linewidth=0.5pt](-9.1,0){0.3}{-180}{0}
\psarc[linewidth=0.5pt](-8.9,0){0.3}{-180}{0}
\psarc[linewidth=0.5pt](-8.1,0){0.3}{-180}{0}
\psarc[linewidth=0.5pt](-7.9,0){0.3}{-180}{0}
\psarc[linewidth=0.5pt](-8.5,0){.1}{0}{180}
\psarc[linewidth=0.5pt](-8.5,0){.7}{0}{180}
\rput(-6.75,0.5){\makebox(0,0){$-$}}
\rput(-3.5,0){\BASE}
\psarc[linewidth=0.5pt](-3.55,0){.65}{0}{180}
\psarc[linewidth=0.5pt](-3.55,0){1.25}{0}{180}
\psarc[linewidth=0.5pt](-3.95,0){1.25}{0}{180}
\psarc[linewidth=0.5pt](-3.95,0){1.85}{0}{180}
\psarc[linewidth=0.5pt](-2.4,0){0.3}{-180}{0}
\psarc[linewidth=0.5pt](-2.6,0){0.3}{-180}{0}
\rput(0,0.5){\makebox(0,0){$-$}}
\rput(3.5,0){\BASE}
\psarc[linewidth=0.5pt](3.55,0){1.75}{0}{180}
\psarc[linewidth=0.5pt](3.55,0){2.35}{0}{180}
\psarc[linewidth=0.5pt](3.95,0){1.15}{0}{180}
\psarc[linewidth=0.5pt](3.95,0){1.75}{0}{180}
\psarc[linewidth=0.5pt](5.4,0){0.3}{-180}{0}
\psarc[linewidth=0.5pt](5.6,0){0.3}{-180}{0}
\rput(6.75,0.5){\makebox(0,0){$+$}}
\rput(10,0){\BASE}
\psarc[linewidth=0.5pt](8.5,0){.2}{0}{180}
\psarc[linewidth=0.5pt](8.5,0){.8}{0}{180}
\rput(-10,-4){\makebox(0,0){$=$}}
\rput(-6,-4){\BASE}
\psarc[linewidth=0.5pt](-6.05,-4){.65}{0}{180}
\psarc[linewidth=0.5pt](-6.25,-4){1.05}{0}{180}
\psarc[linewidth=0.5pt](-5.95,-4){2.35}{0}{180}
\psarc[linewidth=0.5pt](-5.75,-4){1.95}{0}{180}
\psarc[linewidth=0.5pt](-5,-4){0.2}{-180}{0}
\psarc[linewidth=0.5pt](-5,-4){0.4}{-180}{0}
\psarc[linewidth=0.5pt](-4,-4){0.2}{-180}{0}
\psarc[linewidth=0.5pt](-4,-4){0.4}{-180}{0}
\psarc[linewidth=0.5pt](-4.5,-4){.1}{0}{180}
\psarc[linewidth=0.5pt](-4.5,-4){.3}{0}{180}
\rput(.5,-4){$={G}^{(1)}_{ij}$}
\end{pspicture}
}
\caption{}
\label{fig:sk}
\end{figure}

\subsection{Proof of Theorem \ref{th:main}}

The proof of this theorem is organized as follows: first we
construct the Poisson algebra of the elements $G^{(k)}_{i,j}$
considered as elements in a subgroup of $\Delta_{0,1,n+m}$, the
Fuchsian group in the $A_{n+m}$ case. Then we prove that the
abstract algebra ${\mathfrak D}_n$  is actually an infinite
dimensional Poisson algebra (see Appendix B).

We begin by extending the Korotkin--Samtleben bracket  to the
monodromy matrices of the system (\ref{eq:clash}):

\begin{prop}\label{lem-merging}
If $M_i$, $i=1,\dots,n+m$, satisfy the brackets (\ref{eq:KSP}), then
the set of new matrices ${\ttilde M}_i$, $i=1,\dots,n+1$ such that ${\ttilde M}_i=M_i$
for $i=1,\dots,n$ and ${\ttilde M}_{n+1}=M_h$ satisfy the same
brackets (\ref{eq:KSP}). We also have the following brackets for the powers of $M_h$:
\be
\label{MM-i-tilde}
\left\{ M_i\otim_,M^k_h\right\}
=\frac{1}{2}\left(  {\one{M}}_i \Omega {\two{M}}{}^k_h
+{\two{{M}}}{}^k_h\Omega{\one{{ M}}}_i-\Omega{\one{{ M}}}_i
{\two{{M}}}{}^k_h
-{\two{M}}{}^k_h{\one{{M}}}_i\Omega\right) ,\forall k\in{\mathbb Z},
\ee
\be
\label{MM-tilde}
\left\{{ M}_{h}\otim_,{M}^k_{h}\right\}
=\frac{1}{2}\left( {\two{{M}}}{}^k_{h}  \Omega {\one{{ M}}}_{h}
+{\one{{ M}}}_{h}{\two{{M}}}{}^k_h\Omega-\Omega{\one{{M}}}_h
{\two{{ M}}}{}^k_h
-{\one{{ M}}}_h\Omega{\two{{ M}}}{}^k_h\right) ,
\forall k\in{\mathbb Z}.
\ee
\end{prop}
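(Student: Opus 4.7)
The plan is to apply the Leibniz rule for the Korotkin--Samtleben bracket (\ref{eq:KSP}) to the product $M_h = M_{n+1}M_{n+2}\cdots M_{n+m}$ and then collapse the resulting telescoping sums via the intertwining identities for the exchange matrix,
\[
\Omega\,\one{A} = \two{A}\,\Omega, \qquad \two{A}\,\Omega = \Omega\,\one{A}, \qquad \Omega^2 = I.
\]
The cases $i,j\le n$ of the first claim are trivial since then $\ttilde M_i = M_i$, $\ttilde M_j = M_j$, so only the mixed bracket $\{M_i \otim_, M_h\}$ with $i\le n$ and the self-bracket $\{M_h \otim_, M_h\}$ need work.

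For the mixed bracket, Leibniz gives
\[
\{M_i \otim_, M_h\} \;=\; \sum_{k=n+1}^{n+m} \two{M_{n+1}\cdots M_{k-1}}\,\{M_i \otim_, M_k\}\, \two{M_{k+1}\cdots M_{n+m}}.
\]
After substituting the four-term off-diagonal bracket from (\ref{eq:KSP}) into each summand, moving $\one{M_i}$ freely past every $\two{\cdot}$, and displacing each $\Omega$ either to the extreme left or to the extreme right through the identities above, the contributions from consecutive values of $k$ cancel pairwise. I would first check this explicitly for $m=2$, where the eight resulting terms split into the expected four-term bracket for $M_h$ plus four middle terms that cancel in two pairs; the general case then follows by induction on $m$ using the factorisation $M_{n+1}\cdots M_{n+m} = M_{n+1}(M_{n+2}\cdots M_{n+m})$, which reduces the $m$-step telescope to a $2$-step telescope at each level.

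For $\{M_h \otim_, M_h\}$, I would apply Leibniz on both tensor slots simultaneously, producing a double sum over pairs $(k,l)\in\{n+1,\dots,n+m\}^2$: the diagonal pairs $k=l$ contribute the two-term diagonal bracket of (\ref{eq:KSP}), while the off-diagonal pairs $k\ne l$ contribute the four-term one (with the roles of $\one{\cdot}$ and $\two{\cdot}$ swapped according to the sign of $k-l$), and both are collapsed by the same intertwining manipulations to the two-term diagonal bracket for $M_h$. For the general brackets (\ref{MM-i-tilde}) and (\ref{MM-tilde}) with $k\ge 2$, a further application of Leibniz,
\[
\{\,\cdot\, \otim_, M_h^k\} \;=\; \sum_{j=0}^{k-1} \two{M_h^{\,j}}\,\{\,\cdot\, \otim_, M_h\}\,\two{M_h^{\,k-1-j}},
\]
combined with the brackets for $M_h$ just established, closes the argument; the extension to negative powers follows by differentiating $M_h\cdot M_h^{-1}=I$.

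The main obstacle will be the combinatorial bookkeeping needed to certify that all these internal cancellations occur as claimed, since each stage produces $4m$ or $4m^2$ terms before simplification. The cleanest organisation uses the graphical conventions of Fig.~\ref{pic:KS}: each application of $\Omega\,\one{A} = \two{A}\,\Omega$ corresponds to sliding a matrix across a diagonal separator, so that only the endpoint contributions associated with $M_h$ as a single factor survive, and matching the resulting picture against the right-hand sides of (\ref{MM-i-tilde}) and (\ref{MM-tilde}) becomes essentially automatic.
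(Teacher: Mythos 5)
Your proposal is correct and follows essentially the same route as the paper: apply the Leibniz rule to the product defining $M_h$, verify the pairwise cancellation of the middle terms explicitly for $m=2$, extend to general $m$ by induction, and then obtain (\ref{MM-i-tilde}) and (\ref{MM-tilde}) for powers of $M_h$ by a further application of Leibniz together with the intertwining relations $\one{M}_h\Omega=\Omega\two{M}_h$ and $\two{M}_h\Omega=\Omega\one{M}_h$. The paper's proof is exactly this (with the general-$m$ induction likewise left to the reader), so no further comparison is needed.
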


\proof
First we prove the case when $m=2$, i.e. $M_h=M_{n+1} M_{n+2}$:
\begin{eqnarray}\nn
\left\{ M_i\otim_,M_h\right\}
\!\!\!\!\!
&=&
\!\!\!\!\!
\frac{1}{2}\left(\one{M_i}\Omega\two{M_{n+1} } +   \two{M_{n+1}} \Omega \one{M_i}-
\Omega\one{M_i} \two{M_{n+1} }  - \two{M_{n+1}} \one{M_i} \Omega
\right) \two{M_{n+2} }-\nn\\
\!\!\!\!\!
&+&
\!\!\!\!\!
\frac{1}{2} \two{M_{n+1} }\left(\one{M_i}\Omega\two{M_{n+2} } +   \two{M_{n+2} }\Omega \one{M_i}-
\Omega\one{M_i} \two{M_{n+2} }  - \two{M_{n+2}} \one{M_i} \Omega
\right)=\nn\\
\!\!\!\!\!
&=&
\!\!\!\!\!
\frac{1}{2} \left(\one{M_i}\Omega\two{M_{n+1}}\two{M_{n+2} }
+ \two{M_{n+1} }  \two{M_{n+2} }\Omega \one{M_i}-
\Omega\one{M_i}\two{M_{n+1} }
 \two{M_{n+2} }\right.  -\nn\\
 \!\!\!\!\!
 &-&
\!\!\!\!\!
\left. \two{M_{n+1} } \two{M_{n+2}} \one{M_i} \Omega\right).
\nn
\end{eqnarray}
The proof for any $m$ is a straightforward induction on $m$ and we
omit it. The proof of (\ref{MM-i-tilde}) is a simple consequence of
the fact that $M_1,\dots,M_n,M_h$ satisfy the brackets
(\ref{eq:KSP}). The proof of (\ref{MM-tilde}) is again the same sort
of computations, using the freedom of transferring
${\one{{M}}}_h\Omega=\Omega{\two{{M}}}_h$ and
${\two{{M}}}_h\Omega=\Omega{\one{{M}}}_{h}$ through the exchange
matrix $\Omega$.\endproof

\proof
To conclude the proof of Theorem  \ref{th:main} we first outline how to obtain (\ref{eq:newDn}). We compute
$$
\left\{G_{j,i}^{(m)},G_{p,l}^{(k)}
\right\}=\left\{\Tr\left(M_jM_h^{m}M_IM_h^{-m}\right),\Tr\left(M_pM_h^{k}M_lM_h^{-k}\right)
\right\},
$$
by applying the Leibnitz rule and  by using the extended
Korotkin--Samtleben bracket (\ref{MM-i-tilde}) and (\ref{MM-tilde}).
By a rather long but straightforward computation using the graphic
representation explained in  the proof of Lemma \ref{lm:techKS} we
arrive to a sum of traces that we break up by using the skein
relation.

The equivalence of (\ref{eq:newDn}) and (\ref{eq:Yangian}) is a standard exercise; the most important
part is to verify the Jacobi relations for these brackets. For this, the representation (\ref{eq:Yangian})
seems to be the most convenient one. The computation is purely technical and we present it
in Appendix B. \endproof

\subsection{$\mathfrak D_n$ as semi--classical limit of $Y'_q(\mathfrak o_n)$}\label{subse:yangian}

Here, we prove that $\mathfrak D_n$ is the semiclassical limit
of the twisted $q$--Yangian $Y'_q(\mathfrak{o}_n)$ for the
orthogonal Lie algebra $\mathfrak{o}_n$ introduced in \cite{MRS}.
The latter is the algebra generated by the matrix elements
$G_{i,j}^{(k)}$, $i,j=1,\dots,n$, $k\in\mathbb Z_{\geq 0}$ subject to the
defining relations:
\be\label{eq:molev}
R(\lambda,\mu)\one{\mathcal G}(\lambda)R(\lambda^{-1},\mu)^{T_1}\two{\mathcal G}(\mu)=
\two{\mathcal G}(\mu)R(\lambda^{-1},\mu)^{T_1}\one{\mathcal G}(\lambda)R(\lambda,\mu)
\ee
where the R-matrix is given by
\begin{eqnarray}\label{eq:qR}
R(\lambda,\mu)&=&(\lambda-\mu)\sum_{i\neq j} E_{ii}\otim E_{jj}+
(q^{-1}\lambda-q\mu)\sum_{i}E_{ii}\otim E_{ii} +\nn \\
&+&(q^{-1}-q)\lambda\sum_{i<j}E_{ij}\otim E_{ji}+(q^{-1}-q)\mu\sum_{i>j} E_{ij}\otim E_{ji}
\end{eqnarray}
and it is a solution of the Yang--Baxter equation and the apex $T_1$ indicates the transposition in space one.

The semiclassical limit is obtained by putting $q=-\exp(i \pi
\hbar)$ and taking the terms of order $\hbar$ in the Laurent
expansion as $\hbar$ tends to zero. The $R$ matrix is expanded as
$$
R(\lambda,\mu) = (\lambda-\mu) \ID\otimes \ID + i\pi \hbar\, r(\lambda,\mu),
$$
where $r$ is a classical $r$--matrix:
\be\label{eq:clr}
r(\lambda,\mu)= (\lambda+\mu)\sum_{ij} E_{ii}\otim E_{ii}+ 2 \lambda \sum_{i<j}E_{ij}\otim E_{ji} +
2 \mu \sum_{i>j} E_{ij}\otim E_{ji},
\ee
while the matrix $\mathcal G(\lambda)$ remains the same. The reflection equation (\ref{eq:molev}) in the semiclassical limit becomes:
\be\label{eq:molev-cl}
\left\{\one{\mathcal G}(\lambda)\otim_,
\two{\mathcal G}(\mu)\right\}=  \left[\frac{r(\lambda,\mu)}{\lambda-\mu},\one{\mathcal G}(\lambda)\two{\mathcal G}(\mu)
\right] + \one{\mathcal G}(\lambda) \frac{r(\lambda^{-1},\mu)^{T_1}}{\lambda^{-1}-\mu}\two{\mathcal G}(\mu)-
\two{\mathcal G}(\mu) \frac{r(\lambda^{-1},\mu)^{T_1}}{\lambda^{-1}-\mu}
\one{\mathcal G}(\lambda).
\ee
It is a straightforward computation to show that formula
(\ref{eq:molev-cl}) coincides with our formula
(\ref{eq:Yangian}).\footnote{Observe that in \cite{MRS} a different
normalization for $\mathcal G$ was used: the level $0$ was taken
lower triangular. As a consequence the $R$-matrix was globally
transposed.}

\subsection{Braid-group relations for $G_{i,j}^{(k)}$}\label{subse:braid}

The braid group (and the mapping class group) in the case of ${\mathfrak D}_n$
algebra is generated by $n$ generators. Besides the standard
generators $\beta_{i,i+1}$, $i=1,\dots,n-1$, each of which
interchanges the $i$th and the $i+1$th orbifold points, we have one
additional generator $\beta_{1,n}$ interchanging the first and the
last, $n$th orbifold points. The action of this element is not
trivial due to the presence of the new hole:

\begin{theorem}
Let ${\mathcal G}^{(k)}$ be a matrix of entries $G_{ij}^{(k)}$. The action of the braid-group elements
$\beta_{i,i+1}$ for $i=1,\dots,n-1$ is given by
\be\label{eq:DnBraid1}
\beta_{i,i+1}{\mathcal G}^{(k)}=\ttilde{\mathcal G}^{(k)}:
\left\{
\begin{array}{ll}
  {\ttilde G}^{(k)}_{i+1,j}=G^{(k)}_{i,j}, & j>i+1,\\
  {\ttilde G}^{(k)}_{j,i+1}=G^{(k)}_{j,i}, & j<i, \\
  {\ttilde G}^{(k)}_{i,j}=G^{(k)}_{i,j}G^{(0)}_{i,i+1}-G^{(k)}_{i+1,j}, & j>i+1, \\
  {\ttilde G}^{(k)}_{j,i}=G^{(k)}_{j,i}G^{(0)}_{i,i+1}-G^{(k)}_{j,i+1}, & j<i, \\
\multispan{2} \hbox{${\ttilde G}^{(k)}_{i,i}=G^{(k)}_{i,i}\bigl(G^{(0)}_{i,i+1}\bigr)^2
-G^{(k)}_{i,i+1}G^{(0)}_{i,i+1}$}\hfill{} \\
&-G^{(k)}_{i+1,i}G^{(0)}_{i,i+1}+G^{(k)}_{i+1,i+1},\\
  {\ttilde G}^{(k)}_{i,i+1}=G^{(k)}_{i,i}G^{(0)}_{i,i+1}-G^{(k)}_{i+1,i}, & \\
  {\ttilde G}^{(k)}_{i+1,i}=G^{(k)}_{i,i}G^{(0)}_{i,i+1}-G^{(k)}_{i,i+1}, & \\
  {\ttilde G}^{(k)}_{i+1,i+1}=G^{(k)}_{i,i}. &  \\
\end{array}%
\right.
\ee
The action of the last generator, $\beta_{n,1}$ is
\be\label{eq:DnBraid2}
\beta_{n,1}{\mathcal G}^{(k)}={\ttilde{\mathcal G}}^{(k)}:  \
\left\{
\begin{array}{ll}
  {\ttilde G}^{(k)}_{1,j}=G^{(k+1)}_{n,j}, & n>j>1,\\
  {\ttilde G}^{(k)}_{j,1}=G^{(k-1)}_{j,n}, & 1<j<n, \\
  {\ttilde G}^{(k)}_{n,j}=G^{(k)}_{n,j}G^{(1)}_{n,1}-G^{(k-1)}_{1,j}, & n>j>1, \\
  {\ttilde G}^{(k)}_{j,n}=G^{(k)}_{j,n}G^{(1)}_{n,1}-G^{(k+1)}_{j,1}, & 1<j<n, \\
\multispan{2}
\hbox{${\ttilde G}^{(k)}_{n,n}=G^{(k)}_{n,n}\bigl(G^{(1)}_{n,1}\bigr)^2-G^{(k+1)}_{n,1}G^{(1)}_{n,1}
  -G^{(k-1)}_{1,n}G^{(1)}_{n,1}+G^{(k)}_{1,1},$} \\
  {\ttilde G}^{(k)}_{n,1}=G^{(k-1)}_{n,n}G^{(1)}_{n,1}-G^{(k-2)}_{1,n}, & \\
  {\ttilde G}^{(k)}_{1,n}=G^{(k+1)}_{n,n}G^{(1)}_{n,1}-G^{(k+2)}_{n,1}, & \\
  {\ttilde G}^{(k)}_{1,1}=G^{(k)}_{n,n}, &  \\
\end{array}%
\right.
\ee
where we imply the relations (\ref{eq:Gijk1}) and (\ref{eq:Gijk2}), in particular, we have
$$
{\ttilde G}^{(1)}_{n,1}=G^{(0)}_{n,n}G^{(1)}_{n,1}-G^{(-1)}_{1,n}=G^{(1)}_{n,1}.
$$
\end{theorem}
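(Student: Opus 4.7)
The strategy is to lift each braid-group generator to an explicit automorphism of the Fuchsian group that preserves the hole monodromy $M_h$, and then to verify the transformation laws \eqref{eq:DnBraid1}--\eqref{eq:DnBraid2} for $G^{(k)}_{i,j}=-\Tr(M_iM_h^kM_jM_h^{-k})$ by systematic application of the $SL(2)$ skein relation \eqref{eq:skein} together with the identity $M_i^{-1}=-M_i$ (a consequence of $\Tr M_i=0$ and $\det M_i=1$, equivalently $M_i^2=-I$). Apart from the cases that are immediate from the definition, each entry of \eqref{eq:DnBraid1}--\eqref{eq:DnBraid2} reduces to at most two iterations of this skein-plus-involution scheme.

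For $\beta_{i,i+1}$ with $1\le i\le n-1$ I would take the Dubrovin--Mazzocco lift \eqref{eq:braidDM}: $M_i\mapsto M_iM_{i+1}M_i^{-1}$, $M_{i+1}\mapsto M_i$, while $M_j$ for $j\notin\{i,i+1\}$ and $M_h$ remain unchanged. Invariance of $M_h$ makes $\tilde G^{(k)}_{i+1,j}=G^{(k)}_{i,j}$ and $\tilde G^{(k)}_{j,i+1}=G^{(k)}_{j,i}$ immediate. For $\tilde G^{(k)}_{i,j}$ with $j>i+1$ I substitute $\tilde M_i=-M_iM_{i+1}M_i$ and apply the skein identity $\Tr(A)\Tr(B)=\Tr(AB)+\Tr(AB^{-1})$ with $A=M_iM_{i+1}$ and $B=M_iM_h^kM_jM_h^{-k}$; the $AB^{-1}$-term collapses via $M_j^{-1}=-M_j$ and $M_i^2=-I$ to $-G^{(k)}_{i+1,j}$, yielding $\tilde G^{(k)}_{i,j}=G^{(0)}_{i,i+1}G^{(k)}_{i,j}-G^{(k)}_{i+1,j}$. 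The conjugate-order formulas for $j<i$ and the corner entries $\tilde G^{(k)}_{i,i+1},\tilde G^{(k)}_{i+1,i},\tilde G^{(k)}_{i,i},\tilde G^{(k)}_{i+1,i+1}$ follow identically, with at most two skein iterations.

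For $\beta_{n,1}$ I propose the lift
\[
M_1\mapsto M_h^{-1}M_nM_h,\qquad M_n\mapsto M_nM_hM_1M_h^{-1}M_n^{-1},\qquad M_j\mapsto M_j\ (j\neq 1,n),\qquad M_h\mapsto M_h,
\]
the cyclic extension of the Dubrovin--Mazzocco rule with $M_hM_1M_h^{-1}$ playing the role of the orbifold point adjacent to $n$ read through the hole. The level shifts in \eqref{eq:DnBraid2} then arise by absorption of the conjugating $M_h^{\pm1}$ into the powers $M_h^k$: for instance
\[
\tilde G^{(k)}_{1,j}=-\Tr\bigl(M_h^{-1}M_nM_h\cdot M_h^kM_jM_h^{-k}\bigr)=-\Tr\bigl(M_nM_h^{k+1}M_jM_h^{-(k+1)}\bigr)=G^{(k+1)}_{n,j},
\]
and symmetrically $\tilde G^{(k)}_{j,1}=G^{(k-1)}_{j,n}$. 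The bilinear entries $\tilde G^{(k)}_{n,j}$ and $\tilde G^{(k)}_{j,n}$ follow from a single skein step with $A=M_nM_hM_1M_h^{-1}$ (for which $\Tr A=-G^{(1)}_{n,1}$); the $AB^{-1}$-trace collapses to $G^{(k-1)}_{1,j}$ (respectively $G^{(k+1)}_{j,1}$) after applying $M_j^{-1}=-M_j$ and $M_n^2=-I$, producing exactly $G^{(k)}_{n,j}G^{(1)}_{n,1}-G^{(k-1)}_{1,j}$ and its analogue. The four corner entries on the $\{1,n\}\times\{1,n\}$ block require one further iteration of the same pattern, and the consistency check $\tilde G^{(1)}_{n,1}=G^{(0)}_{n,n}G^{(1)}_{n,1}-G^{(-1)}_{1,n}=2G^{(1)}_{n,1}-G^{(1)}_{n,1}=G^{(1)}_{n,1}$ uses only \eqref{eq:Gijk1}--\eqref{eq:Gijk2}.

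The principal obstacle is pinning down the correct monodromy lift for $\beta_{n,1}$: one must verify that it preserves the simultaneous conjugacy class of $(M_1,\dots,M_n,M_h)$ (possibly after adjustment by a global conjugation, which does not affect the induced action on the invariants $G^{(k)}_{i,j}$), and that it is the unique lift compatible with the topological picture of interchanging the first and last orbifold points across the new hole. Once this structural step is in place, the remainder of the theorem is a finite enumeration over the entry types of \eqref{eq:DnBraid1}--\eqref{eq:DnBraid2}, each reducible to the one- or two-step skein calculations indicated above.
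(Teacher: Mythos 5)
Your proposal is correct and takes essentially the same route as the paper: the paper likewise deduces both sets of formulas from the Dubrovin--Mazzocco action (\ref{eq:braidDM}) on the tuple $M_1,\dots,M_n,M_h$ via the skein relation, defining $\beta_{n,1}:=\beta_{n,h}\beta_{h,1}\beta_{n,h}^{-1}$. Your explicit lift $M_1\mapsto M_h^{-1}M_nM_h$, $M_n\mapsto M_nM_hM_1M_h^{-1}M_n^{-1}$, $M_h\mapsto M_h$ is exactly this composite written out, which also disposes of the ``principal obstacle'' you flag: being a product of elementary braid generators, it automatically preserves the relevant conjugacy data.
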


\proof
We deduce the braid group relations for $G_{i,j}^{(k)}$ defined by
(\ref{eq:Gijk}) by the braid group action (\ref{eq:braidDM}) on
the monodromy matrices $M_1,\dots, M_n, M_h$. Setting:
$$
\ttilde G_{i,j}^{(k)}=\tr\left(\beta(M_i) \beta(M_h^k) \beta (M_j) \beta(M_h^{-k})
\right),
$$
one immediately obtains (\ref{eq:DnBraid1}). In the same way, defining
$$
\beta_{n,1}:=\beta_{n,h}\beta_{h,1}\beta_{n,h}^{-1}.
$$
we get (\ref{eq:DnBraid2}). \endproof

\begin{remark}
The braid group action (\ref{eq:DnBraid1}), (\ref{eq:DnBraid2}) does
not depend on the number $m$ of clashed poles, and it is therefore
well defined in the abstract case as well.
\end{remark}

Observe that the action of the braid-group elements
$\beta_{i,i+1}$ for $i=1,\dots,n-1$ is defined separately for each
of $n\times n$ matrices ${\mathcal G}^{(k)}$ (i.e. all relations
involve one and the same level $k$) and it has precisely the same
form for all of them,  while the action of the of the last generator
$\beta_{n,1}$ mixes different levels (labeled by the index
$(k)$).  The following proposition is straightforward to prove:

\begin{prop}\label{lem-braid}
The braid group transformations for ${\mathfrak D}_n$ algebra have the following matrix representation
in terms of the matrix ${\mathcal G}(\lambda)$ (\ref{G-def}):
\be\label{RA-infty}
\beta_{i,i+1}{\mathcal G}(\lambda)=B_{i,i+1}{\mathcal G}(\lambda)B^T_{i,i+1},\quad i=1,\dots,n-1
\ee
where the matrices $B_{i,i+1}$ depend only on $G_{ij}^{(0)}$ and
have the form (\ref{Bii+1}) (with  $G_{ij}$ replaced by
$G_{ij}^{(0)}$).
The action of $\beta_{n,1}$ is
\be
\beta_{n,1}{\mathcal G}(\lambda)={ B}_{n,1}(\lambda){\mathcal G}(\lambda)
\bigr({ B}_{n,1}(\lambda^{-1})\bigl)^T,
\label{R-n1-A-infty}
\ee
where
\be\label{eq:bn1}
B_{n,1}(\lambda)=\left(\begin{array}{ccccc}
0&0&\dots&0&\lambda\\
0&1&0&\dots&0\\
\vdots&0&\ddots&\ddots&\vdots\\
0&\vdots& \ddots& 1& 0\\
-{\lambda}^{-1}&0&\dots&0&G_{n,1}^{(1)}\\
\end{array}\right).
\ee
\end{prop}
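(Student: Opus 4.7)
The plan is to verify both matrix identities (\ref{RA-infty}) and (\ref{R-n1-A-infty}) by direct computation and comparison with the entry-wise braid transformations (\ref{eq:DnBraid1}) and (\ref{eq:DnBraid2}) already established in the preceding theorem.

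For the first identity, since $B_{i,i+1}$ is independent of $\lambda$ and depends only on $G^{(0)}_{i,i+1}$, the conjugation $B_{i,i+1}\mathcal G(\lambda)B^T_{i,i+1}$ distributes over the Laurent expansion $\mathcal G(\lambda)=\mathcal A^{(0)}+\sum_{k\geq1}G^{(k)}\lambda^{-k}$ and acts separately on each coefficient. On the constant term $\mathcal A^{(0)}$ this is exactly the $A_n$ transformation (\ref{BAB}), which already realises the action on $\tilde G^{(0)}_{i,j}$ for $i<j$. At each level $k\ge 1$, only rows and columns $i,i+1$ of $G^{(k)}$ are altered, and a direct $2\times 2$ block calculation reproduces the eight transformation laws in (\ref{eq:DnBraid1}) for $\tilde G^{(k)}_{i+1,j}$, $\tilde G^{(k)}_{j,i+1}$, $\tilde G^{(k)}_{i,j}$, $\tilde G^{(k)}_{j,i}$, $\tilde G^{(k)}_{i,i}$, $\tilde G^{(k)}_{i,i+1}$, $\tilde G^{(k)}_{i+1,i}$, and $\tilde G^{(k)}_{i+1,i+1}$.

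For the second identity I exploit the sparse structure of $B_{n,1}(\lambda)$: its nontrivial entries sit only at positions $(1,n),(n,1),(n,n)$ and on the inner diagonal $(j,j)$, $2\le j\le n-1$. Consequently left multiplication by $B_{n,1}(\lambda)$ leaves rows $2,\dots,n-1$ of $\mathcal G(\lambda)$ unchanged, replaces row $1$ by $\lambda$ times row $n$, and replaces row $n$ by $-\lambda^{-1}$ times row $1$ plus $G^{(1)}_{n,1}$ times row $n$; right multiplication by $B_{n,1}(\lambda^{-1})^T$ acts dually on columns. I then split into cases according to whether $i$ and $j$ lie in $\{1\}$, $\{n\}$, or the interior $\{2,\dots,n-1\}$. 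The prefactors $\lambda^{\pm 1}$ (and $\lambda^{\pm 2}$ at the corners) shift the level index in each series $\mathcal G_{i,j}(\lambda)$, producing precisely the shifted generators $G^{(k\pm 1)}$ and $G^{(k\pm 2)}$ appearing in (\ref{eq:DnBraid2}), and extracting the coefficient of $\lambda^{-k}$ entry by entry matches (\ref{eq:DnBraid2}) term by term.

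The one subtle point concerns the four corner entries $(1,1),(1,n),(n,1),(n,n)$, where both $B_{n,1}(\lambda)$ and $B_{n,1}(\lambda^{-1})^T$ contribute. There, terms formally carrying positive powers of $\lambda$ arise (for instance from the $\lambda\cdot\lambda$ product in the $(1,n)$ entry), and their cancellation must be verified; this follows from the symmetry $G^{(k)}_{i,j}=G^{(-k)}_{j,i}$ of (\ref{eq:Gijk1}) together with $G^{(0)}_{ii}=2$ of (\ref{eq:Gijk2}). The constant term of the resulting $\tilde{\mathcal G}(\lambda)$ at position $(i,j)$ then vanishes for $i>j$, consistent with the upper-triangular convention built into $\mathcal A^{(0)}$; the would-be lower-triangular level-0 entries are recovered implicitly by the symmetry (\ref{eq:Gijk1}). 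I expect the main obstacle to be purely bookkeeping: tracking the four simultaneous level shifts at the corners without losing a sign, rather than any conceptual difficulty.
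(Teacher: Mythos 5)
Your proposal is correct and is precisely the ``straightforward'' verification the paper has in mind (the paper offers no written proof, only the assertion that the matrix form repackages the entry-wise laws (\ref{eq:DnBraid1})--(\ref{eq:DnBraid2})): coefficient-wise conjugation at each level for $\beta_{i,i+1}$, and the sparse row/column action of $B_{n,1}(\lambda)$ producing the level shifts $k\mapsto k\pm1,k\pm2$ for $\beta_{n,1}$. The one point worth stating more sharply is that the vanishing of the spurious positive powers of $\lambda$ at the corners rests on the upper-triangular convention $\mathcal A^{(0)}_{n,1}=0$ and on the diagonal of $\mathcal A^{(0)}$ being $1$ (not $G^{(0)}_{ii}=2$), with the symmetry (\ref{eq:Gijk1}) serving only to recover the discarded lower-triangular level-$0$ entries.
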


\subsection{Quantum braid group relations}\label{subse:qbraid}

In this subsection, we assume ${G_{i,j}^{(k)}}^\hbar$ to be {\em Hermitian operators}, subject
to quantum exchange relations (\ref{eq:molev}). The action of the braid group then
follows from the one for the quantum $D_n$ algebra in \cite{Ch2}.

We define the quantum ${\mathcal G}^\hbar(\lambda)$ to be
\be
{\mathcal G}^\hbar_{i,j}(\lambda):= {\mathcal A}^\hbar_{i,j}+\sum_{k=1}^\infty {G_{i,j}^{(k)}}^\hbar\lambda^{-k},
\label{G-quan}
\ee
where ${\mathcal A}^\hbar$ is an upper-triangular matrix with the entries
${\mathcal A}^\hbar_{i,j}=\{{G^{(0)}}^\hbar_{i,j},i<j;\ q^{-1},i=j;\ 0,i>j\}$. Recall that $q^\dagger=q^{-1}$.

\begin{prop}\label{lem-braid-q}
The braid group transformations for the quantum ${\mathfrak D}_n$ algebra have the following matrix representation
in terms of the matrix ${\mathcal G}^\hbar(\lambda)$ (\ref{G-quan}):
\be
\label{RA}
\beta^\hbar_{i,i+1}{\mathcal G}^\hbar(\lambda)=B^\hbar_{i,i+1}{\mathcal G}^\hbar(\lambda)\bigl(B^\hbar_{i,i+1}\bigr)^\dagger,\quad i=1,\dots,n-1
\ee
where
\be
\label{Bii+1-q}
B^\hbar_{i,i+1}=\begin{array}{c}
            \vdots \\
            i \\
            i+1 \\
            \vdots \\
          \end{array}
          \left(
          \begin{array}{cccccccc}
            1 &  &  &  &  &  &  &  \\
             & \ddots &  &  &  &  &  &  \\
             &  & 1 &  &  &  &  &  \\
             &  &  &   q{G^{(0)}}^\hbar_{i,i+1} & -q^{2} &  & &  \\
            &  &  & 1 & 0 &  &  &  \\
             &  &  &  &  & 1 &  &  \\
             &  &  &  &  &  & \ddots &  \\
             &  &  &  &  &  &  & 1 \\
          \end{array}
          \right).
\ee
The action of $\beta^\hbar_{n,1}$ is
\be
\beta_{n,1}^\hbar{\mathcal G}^\hbar(\lambda)={B}^\hbar_{n,1}(\lambda){\mathcal G}^\hbar(\lambda)
\bigr({B}^\hbar_{n,1}(\lambda^{-1})\bigl)^\dagger,
\label{R-n1-A-q}
\ee
where
\be
\label{eq:bn1-q}
B^\hbar_{n,1}(\lambda)=\left(\begin{array}{ccccc}
0&0&\dots&0&\lambda\\
0&1&0&\dots&0\\
\vdots&0&\ddots&\ddots&\vdots\\
0&\vdots& \ddots&  1& 0\\
-q^2{\lambda}^{-1}&0&\dots&0&q{G_{n,1}^{(1)}}^\hbar\\
\end{array}\right).
\ee
\end{prop}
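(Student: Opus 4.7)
The plan is to lift Proposition \ref{lem-braid} from the classical to the quantum level. Since the $q\to 1$ limit of (\ref{RA})--(\ref{R-n1-A-q}) reproduces (\ref{RA-infty})--(\ref{R-n1-A-infty}), the work remaining is only (a) to fix the correct quantum ordering and the $q$-powers in $B^\hbar_{i,i+1}$ and $B^\hbar_{n,1}(\lambda)$, and (b) to verify that the resulting transformations preserve the defining quantum reflection equation (\ref{eq:molev}) and satisfy the braid-group relations. As a first step I would import the quantum braid-group action for the $D_n$-algebra from \cite{Ch2}, where the matrix $B^\hbar_{i,i+1}$ of (\ref{Bii+1-q}) is already shown to implement $\beta^\hbar_{i,i+1}$ on $\hhat{\mathcal A}^\hbar$ via $B^\hbar_{i,i+1}(\,\cdot\,)(B^\hbar_{i,i+1})^\dagger$. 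Because the reduction $\mathfrak D_n \to D_n$ (described in Sec.~\ref{se:central}) can be presented in matrix form, the very same $B^\hbar_{i,i+1}$ must implement the braid-group action on $\mathfrak D_n$ level by level, giving (\ref{RA}); the specific $q$-factors in (\ref{Bii+1-q}) are then pinned by the Hermiticity requirement $\bigl({\mathcal G}^\hbar(\lambda)\bigr)^\dagger = {\mathcal G}^\hbar(\lambda^{-1})$ built into (\ref{eq:molev}).

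For the new generator $\beta^\hbar_{n,1}$ the guiding principle is that it mimics the classical formula (\ref{R-n1-A-infty}) but carries a spectral-parameter dependence concentrated in the two entries $\lambda$ and $-q^2\lambda^{-1}$ of (\ref{eq:bn1-q}). To derive these I would define $\beta^\hbar_{n,1}$ through the decomposition $\beta^\hbar_{n,1}=\beta^\hbar_{n,h}\beta^\hbar_{h,1}(\beta^\hbar_{n,h})^{-1}$ used classically in Sec.~\ref{subse:braid}, where $h$ denotes the auxiliary generator $M_h$ around the hole. The conjugation by $M_h$ shifts the level index $k\mapsto k\pm1$, and these shifts are precisely encoded by multiplication by $\lambda^{\pm 1}$ in the generating function ${\mathcal G}^\hbar(\lambda)$; the powers of $q$ are again fixed by compatibility with the Hermitian conjugation on the right-hand side of (\ref{R-n1-A-q}).

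Next I would verify that the candidate transformations (\ref{RA})--(\ref{R-n1-A-q}) preserve (\ref{eq:molev}). For $\beta^\hbar_{i,i+1}$ this reduces to a $2\times 2$ block computation, since $B^\hbar_{i,i+1}$ is block-diagonal and depends only on ${G^{(0)}}^\hbar_{i,i+1}$; the essential identity to be checked is of $RBB=BBR$ type,
\[
R(\lambda,\mu)\,\one{B^\hbar_{i,i+1}}\two{B^\hbar_{i,i+1}}=\two{B^\hbar_{i,i+1}}\one{B^\hbar_{i,i+1}}\,R(\lambda,\mu),
\]
together with its transposed counterpart with $R(\lambda^{-1},\mu)^{T_1}$, combined with the commutation of ${G^{(0)}}^\hbar_{i,i+1}$ past the R-matrix in the relevant block. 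These are the checks already performed in \cite{Ch2} for the level-zero subalgebra. For $\beta^\hbar_{n,1}$ an analogous but spectral-parameter dependent identity must hold for $B^\hbar_{n,1}(\lambda)$. The braid-group relations then follow from the matrix identities
\[
B^\hbar_{i-1,i}B^\hbar_{i,i+1}B^\hbar_{i-1,i}=B^\hbar_{i,i+1}B^\hbar_{i-1,i}B^\hbar_{i,i+1}
\]
and their analogues involving $B^\hbar_{n,1}(\lambda)$, which, after the verification of (\ref{RA}) and (\ref{R-n1-A-q}), become purely matrix statements.

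The main obstacle is the verification for $\beta^\hbar_{n,1}$: the matrix $B^\hbar_{n,1}(\lambda)$ is $\lambda$-dependent, and one must check that conjugation by it is compatible with the R-matrix structure of (\ref{eq:molev}) globally in $\lambda$ and $\mu$, not merely order by order in their Laurent expansions. This is the genuinely new content beyond \cite{Ch2}, and I expect that the cleanest way to carry it out is to split $B^\hbar_{n,1}(\lambda)$ into a $\lambda$-independent part plus rank-one $\lambda^{\pm1}$ corrections and to check the resulting $RBB$-type relation block by block.
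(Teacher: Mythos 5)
Your strategy coincides with the paper's: the paper offers no explicit proof of this proposition, relying precisely on the quantum braid-group action for the $D_n$ algebra from \cite{Ch2} together with the observation that the reduction ${\mathfrak D}_n\to D_n$ is realized in matrix form, so the same matrices (with the $q$-factors fixed by Hermiticity) must implement the action level by level. Your additional plan to verify the $RBB$-type compatibility with the reflection equation (\ref{eq:molev}), in particular for the $\lambda$-dependent $B^\hbar_{n,1}(\lambda)$, goes beyond what the paper writes down and correctly isolates the only genuinely new content.
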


\section{Central elements of  the ${\mathfrak D}_n$ algebras and of its Poissonian reductions}\label{se:central}

Thanks to the matrix form of the full braid group action given
in Proposition \ref{lem-braid} above,  Molev and Ragoucy's result
that the central elements of the  ${\mathfrak D}_n$ algebra are given
by the coefficients of $\lambda$ in the polynomial
\be\label{eq:cent}
\det\left({\mathcal G}(\lambda)
\right)
\end{equation}
still holds true for the full braid group action.

We are now going to study two types of finite-dimensional
reductions,
the {\it level-$p$ reductions}\/ and the {\it
reductions to the $D_n$  case}\/ (see Sec.~\ref{sub:dn}),
the corresponding braid group actions and their
central elements. Before
describing these two types of reductions let us recall how to
produce the central elements in the $A_n$ case.

\subsection{Central elements of $A_n$}
The general central elements of the Poisson algebra are simultaneously the braid-group
invariants (which translates into mapping-class-group invariant terms in the geometrical setting).
This can be easily seen from that the relation (\ref{BAB}) holds for the transposed matrix
${\mathcal A}$ as well,
\be
\beta_{i,i+1}{\mathcal A}^T=B_{i,i+1}{\mathcal A}^TB^{T}_{i,i+1},
\label{BABT}
\ee
and so for any linear combination $\lambda{\mathcal A} +
\lambda^{-1}{\mathcal A}^T$, whose determinant is therefore
braid-group invariant object \cite{Bondal}. Because $\det {\mathcal A}=1$, the
generating function for Poisson central elements is
\be
\label{eq:c-e}
\det ({\mathcal A}^{-T}{\mathcal A}-\varphi)
=\det(\lambda{\mathcal A} + \lambda^{-1}{\mathcal A}^T)\lambda^{-n},\quad \varphi=-\lambda^2,
\ee
which coincides with the generating function for the invariants of the braid group.
Therefore when considering the Nelson--Regge algebra (\ref{eq:NR}) as an abstract algebra for
$\frac{n(n-1)}{2}$ elements, the total number of possibly independent central elements is
$\left[\frac n2\right]$.

In the geometric case, the Poisson commuting elements (the Casimir elements)
are traces of monodromies. Whereas $\tr M_i^k=\Tr(F^k)$ is a
constant, the monodromy at infinity is nontrivial and its trace is
equal to
$$
\tr M_\infty=(-1)^{n-1} \cosh(P),
$$
where $P$ is the perimeter of the hole defined in (\ref{eq:per}). We
can prove that in this case the only non trivial braid group
invariants  generated by (\ref{eq:c-e}) is precisely $P$
\cite{ChMa}.

\subsection{Level $p$-reductions}

We obtain the level $p$ reduction if we set
\be
\label{level-p}
{M}^p_{h}=\ID
\ee
for some integer $p$. In the $2\times 2$ monodromy case, $\tr M_h=2\cos(2\pi k/p)=e^{P_h/2}+e^{-P_h/2}$,
where $k$ is an integer and $P_h=4\pi i k/p$ is the complex-valued perimeter of the second hole.
The condition (\ref{level-p}) is Poissonian because substituting it into relations
(\ref{MM-i-tilde}) and (\ref{MM-tilde}) for $k=p$ we obtain identities: the left-hand sides of
these relations vanish.

In the geometric setting, this condition means that, instead of a
new hole, we introduce a new orbifold point of order $p$. On the
level of elements of the ${\mathfrak D}_n$ algebra, it means that
$G^{(k+p)}_{i,j}=G^{(k)}_{i,j}$ or, since $G^{(-k)}_{j,i}=G^{(k)}_{i,j}$, we obtain
\be
\label{level-p-G}
G^{(k)}_{i,j}=G^{(p-k)}_{j,i}\ \hbox{for}\ k=0,\dots,p-1.
\ee
Due to this reduction, the generating function simplifies to
$\mathcal G(\lambda) =\frac{1}{\lambda^p-1}  \mathcal G_p(\lambda)$ where
\be\label{Gp}
\mathcal G_p(\lambda):=\mathcal A^{(0)} + \frac{\mathcal G^{(1)}}{\lambda}+\dots+
 \frac{\mathcal G^{(p-1)}}{\lambda^{p-1}}+\frac{{\mathcal A^{(0)}}^T}{\lambda^p},
 \ee
so the algebra becomes finite and we reserve for it the notation ${\mathfrak D}^{(p)}_n$.
\subsubsection{Braid-group relations and central elements}

We now present the action of the braid group for the level $p$-reductions.

\begin{prop}\label{th-braid}
The braid group relations (\ref{RA-infty}) and (\ref{R-n1-A-infty}) with the
reduction condition (\ref{level-p-G}) imposed retain their matrix forms provided
we replace ${\mathcal G}(\lambda)$ by ${\mathcal G}_p(\lambda)$ given in (\ref{Gp}).
There are exactly  $\left[\dfrac{np}{2}\right]$ algebraically independent central elements in the
algebra ${\mathfrak D}^{(p)}_n$. They are generated by $\det {\mathcal G}_p(\lambda)$.
\end{prop}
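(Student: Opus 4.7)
The plan is to prove the two halves of the proposition separately: first, that the braid matrix identities (\ref{RA-infty})--(\ref{R-n1-A-infty}) descend to $\mathcal G_p(\lambda)$, and second, that $\det \mathcal G_p(\lambda)$ supplies precisely $\left[\frac{np}{2}\right]$ algebraically independent central elements of $\mathfrak D_n^{(p)}$.

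For the first half, I would first observe that the reduction ideal (\ref{level-p-G}) is braid-invariant. The generators $\beta_{i,i+1}$ for $1\le i\le n-1$ act separately in each level index via (\ref{eq:DnBraid1}), with identical formulas at every level, so they commute with the shift $k\mapsto k+p$ and trivially preserve (\ref{level-p-G}). For $\beta_{n,1}$, the geometric origin (\ref{level-p}), $M_h^p=\ID$, is preserved because the braid conjugates $M_h$ by a fixed element of the Fuchsian group; the algebraic condition (\ref{level-p-G}) therefore descends to the quotient. Once invariance is established, the matrix form follows by substituting $\mathcal G(\lambda)=(\lambda^p-1)^{-1}\mathcal G_p(\lambda)$ into (\ref{RA-infty}) and (\ref{R-n1-A-infty}); the scalar factor $(\lambda^p-1)^{-1}$ cancels between the two sides, yielding the identical matrix equations with $\mathcal G_p$ in place of $\mathcal G$.

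For the second half, by the Molev--Ragoucy statement recalled just before the proposition, the central elements of $\mathfrak D_n$ are encoded in $\det\mathcal G(\lambda)$. Under the reduction,
\[
\det \mathcal G(\lambda) = \frac{\det \mathcal G_p(\lambda)}{(\lambda^p-1)^n},
\]
so the center of $\mathfrak D_n^{(p)}$ is generated by the $np+1$ coefficients of $\det \mathcal G_p(\lambda)$, viewed as a polynomial of degree $np$ in $\lambda^{-1}$. The reduction (\ref{level-p-G}) is equivalent to $(\mathcal G^{(k)})^T=\mathcal G^{(p-k)}$, whence $\mathcal G_p(\lambda)^T=\lambda^{-p}\mathcal G_p(\lambda^{-1})$, and consequently $\det\mathcal G_p(\lambda)=\lambda^{-np}\det\mathcal G_p(\lambda^{-1})$: the coefficient sequence is palindromic. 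Moreover, the extreme coefficients $c_0$ and $c_{np}$ equal $\det\mathcal A^{(0)}=\det\mathcal A^{(0)T}=1$, so are fixed. A direct count of the free coefficients of a palindromic polynomial of degree $np$ with both endpoints prescribed yields exactly $\left[\frac{np}{2}\right]$.

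The only step which is not routine is the algebraic independence of these $\left[\frac{np}{2}\right]$ coefficients as elements of the Poisson algebra $\mathfrak D_n^{(p)}$, which is the technical content deferred to Appendix~C. The natural strategy is to exhibit a point (or a low-dimensional family) in the representation variety, parametrized for instance by a convenient subset of shear coordinates $Z_i,Y_j$ of the underlying annulus with $n$ orbifold points and the $p$-torsion loop $M_h$, at which the Jacobian of the map sending a point to the $\left[\frac{np}{2}\right]$-tuple of coefficient values has maximal rank; a Zariski-density argument then upgrades this generic rank computation to algebraic independence in the abstract Poisson algebra, closing off any further Casimirs.
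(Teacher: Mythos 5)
Your treatment of the first half and of the coefficient count is fine and matches the paper: the scalar factor $(\lambda^p-1)^{-1}$ passes through both (\ref{RA-infty}) and (\ref{R-n1-A-infty}), and the symmetry $\mathcal G_p(\lambda)^T=\lambda^{-p}\mathcal G_p(\lambda^{-1})$ (which the paper makes transparent by substituting $\lambda=u^2$, so that $\mathcal G_p(u)=\bigl(\mathcal G_p(u^{-1})\bigr)^T$) shows the determinant is palindromic with fixed endpoints, leaving $\left[np/2\right]$ candidate Casimirs. The problem is that you stop exactly where the proposition's actual content begins: the algebraic independence of those $\left[np/2\right]$ coefficients is the whole point of Appendix~C, and declaring it ``the natural strategy'' without executing it leaves the proposition unproved.

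Worse, the strategy you sketch for that step would not work. You propose to find a point of the \emph{representation variety}, parametrized by shear coordinates of the annulus with $n$ orbifold points, at which the Jacobian of the $\left[np/2\right]$ coefficients has maximal rank. But a Jacobian taken with respect to shear coordinates only sees the restriction of the differentials $dc_k$ to the tangent space of the geometric locus, whose dimension is of order $n$ and independent of $p$; the paper itself remarks that the geometric case sits on highly degenerate symplectic leaves. For fixed $n$ and large $p$ one has $\left[np/2\right]$ strictly larger than the number of shear coordinates, so that Jacobian can never attain rank $\left[np/2\right]$, and no Zariski-density argument can rescue it. The paper instead works in the \emph{abstract} coordinates $G^{(k)}_{i,j}$ (subject only to (\ref{level-p-G})): it evaluates the differentials $d\det\mathcal G_p(u)=\det\mathcal G_p(u)\,\tr\bigl(\mathcal G_p^{-1}(u)\,d\mathcal G_p(u)\bigr)$ at the special point where every $G^{(k)}_{i,j}=1$, forms the $\left(np/2\right)\times\left(np/2\right)$ Gram matrix of the relevant coefficients, and shows by an explicit chain of row and column reductions that its determinant equals $2$, hence is nonzero. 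That nondegeneracy computation is the irreducible technical core of the proof, and it is absent from your proposal.
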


The proof of this proposition is very technical and we set it in Appendix~C.

\begin{remark}
Observe that for $p=1$, $\det\left({\mathcal G}_1(\lambda)\right)$
becomes the generating function of the braid group invariants for
the $A_n$ algebra given in (\ref{eq:c-e}).
\end{remark}

We can now compute the dimension of the Poisson leaves corresponding to the  algebra
${\mathfrak D}^{(p)}_n$. From condition (\ref{level-p-G}) we have that
the number of generators $G^{(k)}_{i,j}$ of the algebra ${\mathfrak D}^{(p)}_n$
is $n^2p/2$ for even $p$ and $n(np-1)/2$ for odd $p$. Having
$\left[\frac{np}{2}\right]$ generally algebraically independent central elements,
we find that the highest dimensional symplectic leaves of the algebra ${\mathfrak D}^{(p)}_n$ have dimension
always even:
$$
\hbox{Poisson leaf dim}=
\left\{\begin{array}{l}
\frac{n^2p}2-\frac{np}2=\frac{n(n-1)p}2,\ \hbox{for $p$ even, any $n$,}\\
\\
\frac{n(n-1)p}2-\frac n2,\ \hbox{ for $p$ odd, $n$ even,}\\
\\
\frac{n(n-1)p}2-\frac {n-1}2,\ \hbox{ for $p$ odd, $n$ odd.}\\
\end{array}
\right.
$$
As a consequence, the geometric case corresponds to highly degenerated symplectic leaves, as their dimension is $2(n-2)$.
It is an interesting problem to provide the complete classification of the dimensions of
symplectic leaves of ${\mathfrak D}^{(p)}_n$ algebras in the spirit of such the classification
for the $A_n$ algebras constructed by Bondal \cite{Bondal}; we leave it for future studies.

\subsection{Reduction of ${\mathfrak D}_n$ to the $D_n$ algebra}

\subsubsection{Basic relations of the reduction ${\mathfrak D}_n\to D_n$}

We begin with that we naturally identify those elements of the algebra ${\mathfrak D}_n$
that correspond to geodesics without self-intersections with the corresponding elements of the
$D_n$ algebra (see Figure \ref{fi:Dn}):
$$
G^{(0)}_{i,j}\to {\hhat G}_{i,j},\quad 1\le i<j\le n
$$
and
$$
G^{(1)}_{i,j}\to {\hhat G}_{i,j},\quad 1\le j<i\le n.
$$

We now use the skein relations to present elements $G^{(1)}_{i,j}$ with $i\le j$:
\bea
&{}&G^{(1)}_{i,i}\to {\hhat G}_{i,i}^2+\Pi^2-2,\quad 1\le i\le n,
\label{red-ii}
\\
&{}&G^{(1)}_{i,j}\to 2{\hhat G}_{i,i}{\hhat G}_{j,j}-{\hhat G}_{j,i}
+(\Pi^2-2){\hhat G}_{i,j},\quad 1\le i<j\le n,
\label{red-ij}
\eea
or, in the graphical form,
$$
{\psset{unit=0.35}
\begin{pspicture}(-7,-3)(7,2)
\newcommand{\MOLD}{%
\rput(2,-2){\pscircle*{0.1}}
\pscircle[fillstyle=solid, fillcolor=gray](0,0){0.5}
}
\newcommand{\GLOOPCLOSED}{%
\psarc[linewidth=0.5pt](2,-1.9){.5}{180}{360}
\psarc[linewidth=0.5pt](1.9,-2){.5}{-90}{90}
\psbezier[linewidth=0.5pt](1.9,-1.5)(1,-1.5)(-1,-1)(-1,0)
\psbezier[linewidth=0.5pt](1.9,-2.5)(0.5,-2.5)(-2,-1.5)(-2,0)
\psbezier[linewidth=0.5pt](1.5,-1.9)(1.5,-1)(1,1)(0,1)
\psbezier[linewidth=0.5pt](2.5,-1.9)(2.5,-0.5)(1.5,2)(0,2)
\psarc[linewidth=0.5pt](0,0){1}{90}{180}
\psarc[linewidth=0.5pt](0,0){2}{90}{180}
}
\newcommand{\GEAR}{%
\psarc[linewidth=0.5pt](0,0){.7}{90}{270}
\psbezier[linewidth=0.5pt](0,0.7)(2.5,0.7)(3.3,0.5)(3.3,0)
\psbezier[linewidth=0.5pt](0,-0.7)(2.5,-0.7)(3.3,-0.5)(3.3,0)
}
\newcommand{\GEARTHICK}{%
\psarc[linewidth=0.5pt](0,0){.9}{90}{270}
\psbezier[linewidth=0.5pt](0,0.9)(2.7,0.9)(3.5,0.6)(3.5,0)
\psbezier[linewidth=0.5pt](0,-0.9)(2.7,-0.9)(3.5,-0.6)(3.5,0)
}
\rput(-10.5,0){\MOLD}
\rput(-10.5,0){\GLOOPCLOSED}
\rput(-7,0){\makebox(0,0){$=$}}
\rput(-3.5,0){\MOLD}
\rput{315}(-3.5,0){\GEAR}
\rput{315}(-3.5,0){\GEARTHICK}
\rput(0,0){\makebox(0,0){$+$}}
\rput(3.5,0){\MOLD}
\pscircle[linewidth=0.5pt](3.5,0){0.8}
\pscircle[linewidth=0.5pt](3.5,0){1}
\rput(7,0){\makebox(0,0){$-2$}}
\rput(10.5,0){\MOLD}
\end{pspicture}
}
$$
$$
{\psset{unit=0.35}
\begin{pspicture}(-7,-3)(7,2)
\newcommand{\MOLD}{%
\rput(-2,-2){\pscircle*{0.1}}
\rput(2,-2){\pscircle*{0.1}}
\pscircle[fillstyle=solid, fillcolor=gray](0,0){0.5}
}
\newcommand{\GSHORT}{%
\psarc[linewidth=0.5pt](-2,-2){.5}{120}{300}
\psarc[linewidth=0.5pt](2,-2){.5}{-120}{60}
\psbezier[linewidth=0.5pt](-2.25,-1.567)(-1.817,-1.317)(1.817,-1.317)(2.25,-1.567)
\psbezier[linewidth=0.5pt](-1.75,-2.433)(-1.317,-2.183)(1.317,-2.183)(1.75,-2.433)
}
\newcommand{\GMED}{%
\psarc[linewidth=0.5pt](-2,-2){.5}{90}{270}
\psarc[linewidth=0.5pt](2,2){.5}{0}{180}
\psbezier[linewidth=0.5pt](-2,-1.5)(.3,-1.5)(1.5,-.3)(1.5,2)
\psbezier[linewidth=0.5pt](-2,-2.5)(1,-2.5)(2.5,-1)(2.5,2)
}
\newcommand{\GLONG}{%
\psarc[linewidth=0.5pt](-2,-2){.5}{90}{270}
\psarc[linewidth=0.5pt](-2,2){.5}{90}{270}
\psbezier[linewidth=0.5pt](-2,-1.5)(1.5,-1.5)(1.5,1.5)(-2,1.5)
\psbezier[linewidth=0.5pt](-2,-2.5)(2.1,-2.5)(2.1,2.5)(-2,2.5)
}
\newcommand{\GLOOP}{%
\psarc[linewidth=0.5pt](-2,-2){.5}{90}{270}
\psarc[linewidth=0.5pt](2,-2){.5}{-90}{90}
\psbezier[linewidth=0.5pt](-2,-1.5)(-1,-1.5)(1,-1)(1,0)
\psbezier[linewidth=0.5pt](-2,-2.5)(-0.5,-2.5)(2,-1.5)(2,0)
\psbezier[linewidth=0.5pt](2,-1.5)(1,-1.5)(-1,-1)(-1,0)
\psbezier[linewidth=0.5pt](2,-2.5)(0.5,-2.5)(-2,-1.5)(-2,0)
\psarc[linewidth=0.5pt](0,0){1}{0}{180}
\psarc[linewidth=0.5pt](0,0){2}{0}{180}
}
\newcommand{\GEAR}{%
\psarc[linewidth=0.5pt](0,0){.7}{90}{270}
\psbezier[linewidth=0.5pt](0,0.7)(2.5,0.7)(3.3,0.5)(3.3,0)
\psbezier[linewidth=0.5pt](0,-0.7)(2.5,-0.7)(3.3,-0.5)(3.3,0)
}
\newcommand{\GEARTHICK}{%
\psarc[linewidth=0.5pt](0,0){.9}{90}{270}
\psbezier[linewidth=0.5pt](0,0.9)(2.7,0.9)(3.5,0.6)(3.5,0)
\psbezier[linewidth=0.5pt](0,-0.9)(2.7,-0.9)(3.5,-0.6)(3.5,0)
}
\rput(-14,0){\MOLD}
\rput(-14,0){\GLOOP}
\rput(-10.5,0){\makebox(0,0){$=2$}}
\rput(-7,0){\MOLD}
\rput{225}(-7,0){\GEAR}
\rput{315}(-7,0){\GEAR}
\rput(-3.5,0){\makebox(0,0){$-$}}
\rput(0,0){\MOLD}
\rput{90}(0,0){\GLONG}
\rput(3.5,0){\makebox(0,0){$+$}}
\rput(7,0){\MOLD}
\rput(7,0){\GSHORT}
\pscircle[linewidth=0.5pt](7,0){0.8}
\pscircle[linewidth=0.5pt](7,0){1}
\rput(10.5,0){\makebox(0,0){$-2$}}
\rput(14,0){\MOLD}
\rput(14,0){\GSHORT}
\end{pspicture}
}
$$
(To obtain these relations we also used that when resolving the skein
relations, the empty loop is equal $-2$.) Note, first, the
appearance of the parameter
$$
\Pi:=e^{P_h/2}+e^{-P_h/2},
$$
which is the geodesic function for the second hole with the perimeter $P_h$,
and, second, that we can rewrite the relation (\ref{red-ij}) as\footnote{Note that
all the elements ${\hhat G}_{i,j}$ with $i\ne j$ can be expressed as polynomial
expressions of the elements $G_{a,b}$ of the $A_{n+m}$-algebra; this property is
however lacking for the diagonal elements ${\hhat G}_{i,i}$, which cannot be presented as
polynomial functions of the generators of the ${A}_{n+m}$-algebra.}
\be
\label{resolution}
G^{(1)}_{i,j}\to 2{\hhat G}_{i,i}{\hhat G}_{j,j}-{G}^{(1)}_{j,i}+(\Pi^2-2){G}^{(0)}_{i,j},
\quad 1\le i<j\le n,
\ee
or, recalling that ${\mathcal G}^{(k)}=\bigl({\mathcal G}^{(-k)}\bigr)^T$,
$$
G^{(1)}_{i,j}\to 2{\hhat G}_{i,i}{\hhat G}_{j,j}-{G}^{(-1)}_{i,j}+(\Pi^2-2){G}^{(0)}_{i,j},
\quad 1\le i<j\le n,\ \Pi\ne 0.
$$

It is especially useful to express this reduction in terms of the matrices $\widehat{\mathcal A}$,
${\widehat{\mathcal R}}$, and ${\widehat{\mathcal S}}$ defined in (\ref{hatA-matrix}), (\ref{R.cl}) and (\ref{S.cl}) respectively:
$$
{\mathcal G}^{(1)}\to {\widehat{\mathcal R}}+{\widehat{\mathcal S}}+(\Pi^2-1)\widehat{\mathcal A}-\widehat{\mathcal A}^T.
$$

We now continue expressing higher ${\mathcal G}^{(k)}$ fixing the
parameter $i$ and moving $j$ counterclockwise as shown below (in the
l.h.s. moving $j$ counterclockwise corresponds to constructing the
geodesic which winds around the hole twice):
$$
{\psset{unit=0.4}
\begin{pspicture}(-7,-3)(7,2)
\newcommand{\MOLD}{%
\rput(-2,-2){\pscircle*{0.1}}
\rput(2,-2){\pscircle*{0.1}}
\rput(-2.8,-2){\makebox(0,0){$i$}}
\rput(2.8,-2){\makebox(0,0){$j$}}
\psarc[linestyle=dashed, linewidth=1.5pt]{->}(0.6,-0.6){2}{-35}{305}
\pscircle[fillstyle=solid, fillcolor=gray](0,0){0.5}
}
\newcommand{\GSHORT}{%
\psarc[linewidth=0.5pt](-2,-2){.5}{120}{300}
\psarc[linewidth=0.5pt](2,-2){.5}{-120}{60}
\psbezier[linewidth=0.5pt](-2.25,-1.567)(-1.817,-1.317)(1.817,-1.317)(2.25,-1.567)
\psbezier[linewidth=0.5pt](-1.75,-2.433)(-1.317,-2.183)(1.317,-2.183)(1.75,-2.433)
}
\newcommand{\GMED}{%
\psarc[linewidth=0.5pt](-2,-2){.5}{90}{270}
\psarc[linewidth=0.5pt](2,2){.5}{0}{180}
\psbezier[linewidth=0.5pt](-2,-1.5)(.3,-1.5)(1.5,-.3)(1.5,2)
\psbezier[linewidth=0.5pt](-2,-2.5)(1,-2.5)(2.5,-1)(2.5,2)
}
\newcommand{\GLONG}{%
\psarc[linewidth=0.5pt](-2,-2){.5}{90}{270}
\psarc[linewidth=0.5pt](-2,2){.5}{90}{270}
\psbezier[linewidth=0.5pt](-2,-1.5)(1.5,-1.5)(1.5,1.5)(-2,1.5)
\psbezier[linewidth=0.5pt](-2,-2.5)(2.1,-2.5)(2.1,2.5)(-2,2.5)
}
\newcommand{\GLOOP}{%
\psarc[linewidth=0.5pt](-2,-2){.5}{90}{270}
\psarc[linewidth=0.5pt](2,-2){.5}{-90}{90}
\psbezier[linewidth=0.5pt](-2,-1.5)(-1,-1.5)(1,-1)(1,0)
\psbezier[linewidth=0.5pt](-2,-2.5)(-0.5,-2.5)(2,-1.5)(2,0)
\psbezier[linewidth=0.5pt](2,-1.5)(1,-1.5)(-1,-1)(-1,0)
\psbezier[linewidth=0.5pt](2,-2.5)(0.5,-2.5)(-2,-1.5)(-2,0)
\psarc[linewidth=0.5pt](0,0){1}{0}{180}
\psarc[linewidth=0.5pt](0,0){2}{0}{180}
}
\newcommand{\GEAR}{%
\psarc[linewidth=0.5pt](0,0){.7}{90}{270}
\psbezier[linewidth=0.5pt](0,0.7)(2.5,0.7)(3.3,0.5)(3.3,0)
\psbezier[linewidth=0.5pt](0,-0.7)(2.5,-0.7)(3.3,-0.5)(3.3,0)
}
\newcommand{\GEARTHICK}{%
\psarc[linewidth=0.5pt](0,0){.9}{90}{270}
\psbezier[linewidth=0.5pt](0,0.9)(2.7,0.9)(3.5,0.6)(3.5,0)
\psbezier[linewidth=0.5pt](0,-0.9)(2.7,-0.9)(3.5,-0.6)(3.5,0)
}
\rput(-13,0){\MOLD}
\rput(-13,0){\GLOOP}
\rput(-9,0){\makebox(0,0){$=2$}}
\rput(-5,0){\MOLD}
\rput{225}(-5,0){\GEAR}
\rput{315}(-5,0){\GEAR}
\rput(-1,0){\makebox(0,0){$-$}}
\rput(3,0){\MOLD}
\rput{90}(3,0){\GLONG}
\rput(8,0){\makebox(0,0){$+(\Pi^2-2)$}}
\rput(12,0){\MOLD}
\rput(12,0){\GSHORT}
\end{pspicture}
}
$$
$$
{\psset{unit=0.4}
\begin{pspicture}(-7,-3)(7,2)
\newcommand{\MOLD}{%
\rput(-2,-2){\pscircle*{0.1}}
\rput(2,-2){\pscircle*{0.1}}
\rput(-2.8,-2){\makebox(0,0){$i$}}
\rput(2.8,-2){\makebox(0,0){$j$}}
\pscircle[fillstyle=solid, fillcolor=gray](0,0){0.5}
}
\newcommand{\GSHORT}{%
\psarc[linewidth=0.5pt](-2,-2){.5}{120}{300}
\psarc[linewidth=0.5pt](2,-2){.5}{-120}{60}
\psbezier[linewidth=0.5pt](-2.25,-1.567)(-1.817,-1.317)(1.817,-1.317)(2.25,-1.567)
\psbezier[linewidth=0.5pt](-1.75,-2.433)(-1.317,-2.183)(1.317,-2.183)(1.75,-2.433)
}
\newcommand{\GMED}{%
\psarc[linewidth=0.5pt](-2,-2){.5}{90}{270}
\psarc[linewidth=0.5pt](2,2){.5}{0}{180}
\psbezier[linewidth=0.5pt](-2,-1.5)(.3,-1.5)(1.5,-.3)(1.5,2)
\psbezier[linewidth=0.5pt](-2,-2.5)(1,-2.5)(2.5,-1)(2.5,2)
}
\newcommand{\GLONG}{%
\psarc[linewidth=0.5pt](-2,-2){.5}{90}{270}
\psarc[linewidth=0.5pt](-2,2){.5}{90}{270}
\psbezier[linewidth=0.5pt](-2,-1.5)(1.5,-1.5)(1.5,1.5)(-2,1.5)
\psbezier[linewidth=0.5pt](-2,-2.5)(2.1,-2.5)(2.1,2.5)(-2,2.5)
}
\newcommand{\GLOOP}{%
\psarc[linewidth=0.5pt](-2,-2){.5}{90}{270}
\psarc[linewidth=0.5pt](2,-2){.5}{-90}{90}
\psbezier[linewidth=0.5pt](-2,-1.5)(-1,-1.5)(1,-1)(1,0)
\psbezier[linewidth=0.5pt](-2,-2.5)(-0.5,-2.5)(2,-1.5)(2,0)
\psbezier[linewidth=0.5pt](2,-1.5)(1,-1.5)(-1,-1)(-1,0)
\psbezier[linewidth=0.5pt](2,-2.5)(0.5,-2.5)(-2,-1.5)(-2,0)
\psarc[linewidth=0.5pt](0,0){1}{0}{180}
\psarc[linewidth=0.5pt](0,0){2}{0}{180}
}
\newcommand{\GEAR}{%
\psarc[linewidth=0.5pt](0,0){.7}{90}{270}
\psbezier[linewidth=0.5pt](0,0.7)(2.5,0.7)(3.3,0.5)(3.3,0)
\psbezier[linewidth=0.5pt](0,-0.7)(2.5,-0.7)(3.3,-0.5)(3.3,0)
}
\newcommand{\GEARTHICK}{%
\psarc[linewidth=0.5pt](0,0){.9}{90}{270}
\psbezier[linewidth=0.5pt](0,0.9)(2.7,0.9)(3.5,0.6)(3.5,0)
\psbezier[linewidth=0.5pt](0,-0.9)(2.7,-0.9)(3.5,-0.6)(3.5,0)
}
\rput(-9,0){\makebox(0,0){$=2$}}
\rput(-5,0){\MOLD}
\rput{225}(-5,0){\GEAR}
\rput{315}(-5,0){\GEAR}
\rput(-1,0){\makebox(0,0){$-$}}
\rput(3,0){\MOLD}
\rput(3,0){\GSHORT}
\rput(7.5,0){\makebox(0,0){$+(\Pi^2-2)$}}
\rput(12,0){\MOLD}
\rput(12,0){\GLOOP}
\end{pspicture}
}
$$

After completing this counterclockwise rotation of $j$, we obtain in the left-hand side the element $G^{(2)}_{i,j}$
whereas in the right-hand side the product ${\hhat G}_{i,i}{\hhat G}_{j,j}$
is cyclically symmetric, the term $-{G}^{(-1)}_{i,j}$
becomes $-{G}^{(0)}_{i,j}=-{\hhat G}_{i,j}$, and the last term $(\Pi^2-2){G}^{(0)}_{i,j}$
becomes $(\Pi^2-2){G}^{(1)}_{i,j}$, and we again apply the reduction (\ref{resolution}) and obtain
that the matrix ${\mathcal G}^{(2)}$ can be in turn presented as a linear combination of the
matrices ${\widehat{\mathcal R}}$, ${\widehat{\mathcal S}}$, ${\widehat{\mathcal A}}$, and $\widehat{\mathcal A}^T$. It is not difficult to
solve the obtained recurrent relations and obtain the following reduction law for ${\mathcal G}^{(k)}$:
\bea
&{}&{\mathcal G}^{(k)}\to \frac{e^{kP_h}-e^{-kP_h}}{e^{P_h}-e^{-P_h}}{\widehat{\mathcal R}}+
\frac{e^{kP_h}-2+e^{-kP_h}}{(e^{P_h}-1)(1-e^{-P_h})}{\widehat{\mathcal S}}
\nonumber
\\
&{}&\qquad +\left[\frac{e^{kP_h}}{1-e^{-P_h}}-\frac{e^{-kP_h}}{e^{P_h}-1}\right]{\widehat{\mathcal A}}
-\left[\frac{e^{(k-1)P_h}}{1-e^{-P_h}}-\frac{e^{-(k-1)P_h}}{e^{P_h}-1}\right]\widehat{\mathcal A}^T,\ k\ge1.
\label{recursion}
\eea
The corresponding law for ${\mathcal G}^{(-k)}$ can be obtained by transposing these relations. Now, the
main statement follows.

\begin{theorem}
\label{th-Dn}
The expressions (\ref{RA-infty}) and
(\ref{R-n1-A-infty}) are faithful representations for the braid-group action (\ref{Dn-braid-cl}),
(\ref{Dn-braid-cl-n1}) on elements of the $D_n$ algebra for {\bf any non--zero} $\Pi$ provided
the matrices ${\mathcal G}^{(k)}$ are expressed through the elements ${\hhat G}_{i,j}$ and the
parameter $\Pi\ne0$ using formulas (\ref{recursion}).
\end{theorem}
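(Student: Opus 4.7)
My plan is to verify the theorem by treating the two kinds of braid generators separately. For $\beta_{i,i+1}$ with $1\le i\le n-1$ the assertion is essentially immediate: formula (\ref{recursion}) writes every $\mathcal{G}^{(k)}$ as a linear combination
\[
\mathcal{G}^{(k)}=w_1^{(k)}\widehat{\mathcal{A}}+w_2^{(k)}\widehat{\mathcal{A}}^T+\rho^{(k)}\widehat{\mathcal{R}}+\sigma^{(k)}\widehat{\mathcal{S}}
\]
whose coefficients depend only on $\Pi$. Since $\beta_{i,i+1}$ does not interact with the second hole, it fixes $\Pi$, and by Lemma \ref{lem-braid-Dn-matrix} each of the four atomic matrices transforms by (\ref{BAB}) with the same matrix $B_{i,i+1}$ built from the level-$0$ datum $\hhat G_{i,i+1}$. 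Summing over $k$ yields (\ref{RA-infty}) for $\mathcal{G}(\lambda)$ without additional work.

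The substantive case is $\beta_{n,1}$. First I would compute its action on each of the atomic matrices $\widehat{\mathcal{A}},\widehat{\mathcal{A}}^T,\widehat{\mathcal{R}},\widehat{\mathcal{S}}$ from the explicit rules (\ref{Dn-braid-cl-n1}). Here $\widehat{\mathcal{S}}$ transforms by the involution $\hhat G_{1,1}\leftrightarrow \hhat G_{n,n}$, $\widehat{\mathcal{R}}$ acquires a cubic correction in the $(1,n)$ entry coming from the last line of (\ref{Dn-braid-cl-n1}), and $\widehat{\mathcal{A}}$ with $\widehat{\mathcal{A}}^T$ are interchanged up to corrections supported in rows and columns $1$ and $n$. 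Crucially $\Pi=\Tr(M_h)$ is preserved because $M_h$ commutes with $\beta_{n,1}$, so the coefficients in (\ref{recursion}) are unaffected and the $\beta_{n,1}$-transform of each $\mathcal{G}^{(k)}$ is again a linear combination of the four atomic matrices, computable in closed form.

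In parallel, on the matrix side one expands $B_{n,1}(\lambda)\mathcal{G}(\lambda)B_{n,1}(\lambda^{-1})^T$ entry by entry. Because $B_{n,1}(\lambda)$ agrees with the identity outside the $1,n$ block, only the first and last rows and columns of $\mathcal{G}(\lambda)$ are altered; the corner element $G^{(1)}_{n,1}$ in $B_{n,1}$ reduces via (\ref{resolution}) to $2\hhat G_{1,1}\hhat G_{n,n}-\hhat G_{1,n}+(\Pi^2-2)\hhat G_{n,1}$. After summing the geometric series implicit in (\ref{recursion}) to closed rational expressions in $\lambda$ for the four atomic components, the two sides can be matched as rational functions of $\lambda$, level by level.

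The main obstacle is exactly this matching step: one must show that conjugation by $B_{n,1}(\lambda)$ realises the ``counterclockwise rotation of $j$ around the new hole'' that generated the recursion (\ref{recursion}) in the first place. Concretely, the factor $\lambda$ (respectively $\lambda^{-1}$) in the corner entries of $B_{n,1}(\lambda)$ is what implements the index shift $k\mapsto k\pm 1$, and the identity reduces to equating two rational functions of $\lambda$ with simple poles at $\lambda=e^{\pm P_h}$ and $\lambda=\pm1$, which may be checked by comparing residues. Faithfulness is then automatic: the abstract braid relations on $D_n$ are already known from Lemma \ref{lem-braid-Dn-old}, and the assumption $\Pi\ne0$ is needed precisely to keep the denominators in (\ref{recursion}) non-vanishing so that the substitution remains a well-defined bijection between the reduced $\mathfrak{D}_n$-data and the abstract $D_n$-generators $\hhat G_{i,j}$.
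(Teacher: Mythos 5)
Your proposal is correct and follows essentially the same route as the paper, which simply asserts that the identity is checked by substituting (\ref{recursion}) into $\mathcal G(\lambda)$ and verifying the conjugation formulas entry by entry. The one genuine refinement you add is to dispose of the generators $\beta_{i,i+1}$, $i=1,\dots,n-1$, at a stroke via Lemma \ref{lem-braid-Dn-matrix}: since (\ref{recursion}) expresses each $\mathcal G^{(k)}$ as a linear combination of $\widehat{\mathcal A},\widehat{\mathcal A}^T,\widehat{\mathcal R},\widehat{\mathcal S}$ with coefficients depending only on the invariant $\Pi$, covariance under these generators is immediate, leaving only the $\beta_{n,1}$ computation to be done by hand; this is a cleaner organization than the paper's blanket ``verify directly.'' One small caveat: the role of $\Pi\neq0$ is not only that the denominators in (\ref{recursion}) stay finite (note $e^{P_h}=1$, i.e.\ $\Pi=2$, is also excluded by the formulas themselves) but, as the paper's subsequent remark explains, that for $\Pi=0$ the elements $\hhat G_{i,j}$ become algebraically dependent and the reduction no longer carries the full $D_n$ algebra.
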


\proof We can verify directly that if we substitute the reduction formula
(\ref{recursion}) for every block ${\mathcal G}^{(k)}$
in the matrix representation (\ref{G-def}) and perform the braid-group
transformations in the matrix form (\ref{RA-infty}) and (\ref{R-n1-A-infty}), then,
in each matrix entry, the transformed quantities ${\ttilde {\hhat G}}_{i,j}$ will
satisfy relations (\ref{Dn-braid-cl}) and (\ref{Dn-braid-cl-n1}).\endproof

Note that one of the most important features of the braid group relations
for both the ${\mathfrak D}_n$ algebra and the $D_n$ algebra is that neither of them
depends on the hole perimeters $P$ and $P_h$. So, since these relations both describe the
same mapping class group transformations in the geometrical case, it is natural to
expect that they will also coincide if we apply the reduction procedure based on
the skein relation. In fact this is the case as shown by the following:

\begin{cor}
\label{cor-reduction}
A $p$-level reduction is {\em consistent with the $D_n$-reduction} provided $\bigl(e^{P_h}\bigr)^p=1$. In particular,
the braid-group representation for ${\mathcal G}_p(\lambda)$ (\ref{recursion}) generates then the
braid-group relations (\ref{Dn-braid-cl}) and (\ref{Dn-braid-cl-n1}) of the algebra $D_n$.
\end{cor}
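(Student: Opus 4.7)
The plan is to show two things in sequence: (i) the $D_n$-substitution rule (\ref{recursion}) automatically enforces the $p$-level condition $\mathcal{G}^{(k+p)}=\mathcal{G}^{(k)}$ precisely when $\bigl(e^{P_h}\bigr)^p=1$, so the two reductions can be imposed simultaneously, and (ii) under this compatibility, the matrix braid action on $\mathcal{G}_p(\lambda)$ is the same as the one Theorem~\ref{th-Dn} already used on $\mathcal{G}(\lambda)$, so it generates the $D_n$ braid relations (\ref{Dn-braid-cl}), (\ref{Dn-braid-cl-n1}) with no further work.

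For step (i) I would simply read off the coefficients in (\ref{recursion}). Each of them is a $\mathbb{Q}$-linear combination of $e^{kP_h}$ and $e^{-kP_h}$, so the map $k\mapsto \mathcal{G}^{(k)}$ has period $p$ \emph{as a function of $k$} if and only if $e^{pP_h}=1$. It suffices to check equality at one value, which I would do at $k=p$: under $e^{pP_h}=1$ the numerators of the $\widehat{\mathcal R}$- and $\widehat{\mathcal S}$-coefficients collapse to $1-1=0$ and $1-2+1=0$ respectively, while
\[
\frac{1}{1-e^{-P_h}}-\frac{1}{e^{P_h}-1}
=\frac{1-e^{-P_h}}{1-e^{-P_h}}=1,\qquad
-\!\left[\frac{e^{-P_h}}{1-e^{-P_h}}-\frac{e^{P_h}}{e^{P_h}-1}\right]
=-\frac{1-e^{P_h}}{e^{P_h}-1}=1.
\]
Thus $\mathcal{G}^{(p)}\to\widehat{\mathcal A}+\widehat{\mathcal A}^T$, which equals the matrix whose diagonal entries are $2$ and whose off-diagonal $(i,j)$-entries are $\hhat G_{\min(i,j),\max(i,j)}$; this is exactly the image of $\mathcal{G}^{(0)}$ under the identifications used before (\ref{red-ii}). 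Hence the $D_n$-reduction is $p$-consistent iff $(e^{P_h})^p=1$, which is the first assertion.

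For step (ii), once consistency is established the infinite tail of $\mathcal{G}(\lambda)$ collapses and $\mathcal{G}(\lambda)=\mathcal{G}_p(\lambda)/(\lambda^p-1)$. Proposition~\ref{th-braid} already shows that the full braid action (\ref{RA-infty})--(\ref{R-n1-A-infty}) descends to $\mathcal{G}_p(\lambda)$ through exactly the same block-diagonal matrices $B_{i,i+1}$ and the same $B_{n,1}(\lambda)$. On the other hand, Theorem~\ref{th-Dn} asserts that the \emph{same} matrix action, applied entrywise to $\mathcal{G}(\lambda)$ with (\ref{recursion}) substituted, reproduces the braid relations (\ref{Dn-braid-cl}) and (\ref{Dn-braid-cl-n1}) of the $D_n$ algebra, for any nonzero $\Pi$. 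Combining the two statements gives the claim: after the $p$-level truncation, the braid action on $\mathcal{G}_p(\lambda)$, once re-expressed through $\widehat{\mathcal R},\widehat{\mathcal S},\widehat{\mathcal A},\widehat{\mathcal A}^T$ via (\ref{recursion}), produces the $D_n$ braid-group transformations.

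The only real obstacle is bookkeeping at step (i): one must verify that the four coefficient identities at $k=p$ really do reduce to $0,0,1,1$ under $e^{pP_h}=1$ and, for full rigour, that periodicity at a single value of $k$ forces periodicity at every $k\ge 1$ — this follows because the coefficients are polynomial in $e^{\pm P_h}$ of bounded degree in $k$, so $p$-periodicity of the exponentials forces $p$-periodicity of the whole formula. The braid-group part is then essentially a corollary of two results already proved in the paper, with no additional computation required.
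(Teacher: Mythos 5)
Your proposal is correct and follows the route the paper intends: the corollary is stated as an immediate consequence of the reduction formula (\ref{recursion}) together with Proposition~\ref{th-braid} and Theorem~\ref{th-Dn}, and your verification that $(e^{P_h})^p=1$ makes the coefficients in (\ref{recursion}) $p$-periodic in $k$ (with $\mathcal{G}^{(p)}\to\widehat{\mathcal A}+\widehat{\mathcal A}^T$, the image of $\mathcal{G}^{(0)}$) is exactly the compatibility check that is needed. The coefficient computations at $k=p$ are right, and combining the two cited results for the braid-group part requires no further argument.
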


\begin{remark}
The case $\Pi=0$ corresponding to the reduction of level 2 would correspond to the $D_n$ reduction
$G^{(1)}_{i,j}={\hhat G}_{j,i}=G^{(1)}_{j,i}$. In this case, however, the algebraic elements
${\hhat G}_{i,j}$ become dependent (${\hhat G}_{i,i}{\hhat G}_{j,j}={\hhat G}_{i,j}+{\hhat G}_{j,i}$)
and we are lacking the complete $D_n$ algebra.
\end{remark}

\subsubsection{Central elements of the $D_n$ algebra}\label{subse:DDn}

We now substitute reduction formulas (\ref{recursion}) into the
representation (\ref{G-def}) and perform the explicit summation over
powers of $\lambda^{-1}$. The results reads
\bea
{\mathcal G}(\lambda)&\to&\frac{\lambda}{(\lambda-1)(e^{-P_h}u-1)(e^{P_h}u-1)}\times
\nonumber
\\
&{}&\times \bigl[(\lambda-1){\widehat{\mathcal R}}+(\lambda+1){\widehat{\mathcal S}}+(\lambda^2-1){\widehat{\mathcal A}}
-(\lambda-\lambda^{-1})\widehat{\mathcal A}^T\bigr],
\label{sum-Dn}
\eea
so we come to the following proposition.
\begin{prop}
The $D_n$ algebra admits exactly $n$ algebraically independent central elements $c_1,\dots,c_n$. They are generated by
\bea
&&\det\bigl[(\lambda-1){\widehat{\mathcal R}}+(\lambda+1){\widehat{\mathcal S}}+(\lambda^2-1)\widehat{\mathcal A}
-(\lambda-\lambda^{-1})\widehat{\mathcal A}^T\bigr]=
\\
&&=(\lambda-1)^{n-1}\bigl[\lambda^{n+1}+\sum_{i=1}^n \lambda^i c_i+(-1)^{n+1}
\sum_{i=1}^n \lambda^{1-i}c_i+(-1)^{n+1}\lambda^{-n}\bigr].\nn
\label{central-D}
\eea
\end{prop}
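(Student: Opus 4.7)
The proof would establish three things: the polynomial identity \eqref{central-D}, the centrality of the coefficients $c_i$, and their algebraic independence together with the upper bound of $n$.

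For the polynomial identity, denote the matrix under the determinant by $M(\lambda)$. A direct computation using $\widehat{\mathcal R}^T=-\widehat{\mathcal R}$, $\widehat{\mathcal S}^T=\widehat{\mathcal S}$, and the interchange of $\widehat{\mathcal A}$ with $\widehat{\mathcal A}^T$ under transposition yields the involutive identity $\lambda\,M(1/\lambda)=M^T(\lambda)$, hence $\det M(\lambda)=\lambda^n\det M(1/\lambda)$; this forces the Laurent coefficient sequence of $\det M$ to be palindromic about $\lambda^{n/2}$. Next, $M(1)=2\widehat{\mathcal S}$ is the rank-one dyadic $(\widehat{G}_{i,i}\widehat{G}_{j,j})$, so $(\lambda-1)^{n-1}$ divides $\det M(\lambda)$. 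Combining palindromy, this factorization, and the obvious leading behaviors $\det M(\lambda)\sim\lambda^{2n}$ as $\lambda\to\infty$ (from $\lambda^2\widehat{\mathcal A}$ with $\det\widehat{\mathcal A}=1$) and $\lambda^{-n}$ as $\lambda\to 0$ (from $\lambda^{-1}\widehat{\mathcal A}^T$), I would reduce the Laurent polynomial to exactly the displayed form, with $n$ free coefficients $c_1,\dots,c_n$ and the stated normalizations of the $\lambda^{n+1}$ and $\lambda^{-n}$ terms.

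Centrality follows from the reduction chain $\mathfrak D_n\to D_n$. The remark opening Section \ref{se:central} records that the coefficients of $\det\mathcal G(\lambda)$ are Poisson-central in $\mathfrak D_n$ (Molev--Ragoucy), and \eqref{sum-Dn} shows that under the skein reduction the generating matrix $\mathcal G(\lambda)$ becomes $M(\lambda)$ multiplied by the scalar Laurent factor $\lambda/[(\lambda-1)(e^{-P_h}\lambda-1)(e^{P_h}\lambda-1)]$, which depends only on $\Pi=e^{P_h/2}+e^{-P_h/2}$. Since the perimeter of a boundary component Poisson-commutes with every geodesic length function on the annulus, $\Pi$ is a central parameter, and therefore the coefficients of $\det M(\lambda)$, in particular each $c_i$, are central in the $D_n$ algebra.

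For algebraic independence I would specialize to the diagonal locus $\widehat{G}_{i,j}=0$ for $i\neq j$, keeping $x_i:=\widehat{G}_{i,i}$ as free parameters. On this locus $\widehat{\mathcal R}$ vanishes, $\widehat{\mathcal A}=\widehat{\mathcal A}^T=\mathrm{Id}$, and $\widehat{\mathcal S}$ is the rank-one matrix $(x_i x_j)$, so the matrix-determinant lemma gives a closed form for $\det M(\lambda)$. Reading off the coefficients, one expects each $c_i$ to be (up to normalization) an elementary symmetric function in $x_1^2,\dots,x_n^2$, so the Jacobian with respect to $(x_1,\dots,x_n)$ has maximal rank on an open set, establishing algebraic independence in the full $D_n$ algebra. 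For the upper bound one can either compute directly the generic rank of the cubic Poisson bracket of Section \ref{sub:dn} on the $n^2$ generators $\widehat G_{i,j}$ and verify it equals $n^2-n$ at one explicit point --- the diagonal specialization is again convenient --- or invoke Corollary \ref{cor-reduction}: the case $p=2$ (i.e.\ $\Pi^2=4$) of the $p$-level reduction coincides with the $D_n$ reduction and has by Proposition \ref{th-braid} exactly $[n\cdot 2/2]=n$ algebraically independent Casimirs, matching the $n$ candidates produced above and forbidding an $(n+1)$-st one. The principal obstacle I expect is the explicit evaluation of $\det M(\lambda)$ on the diagonal locus and the verification that the resulting expressions for the $c_i$ are genuinely algebraically independent symmetric polynomials in the $x_i^2$.
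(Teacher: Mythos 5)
Your overall architecture coincides with the paper's: the $(\lambda-1)^{n-1}$ factor from the rank-one degeneration at $\lambda=1$, the palindromy from $\lambda M(1/\lambda)=M^T(\lambda)$ (which you make more explicit than the paper does), centrality inherited from $\det\mathcal G(\lambda)$ in $\mathfrak D_n$, algebraic independence via the diagonal specialization, and the upper bound via the rank of the Poisson bivector at that same point. However, there are two concrete gaps. First, your claim that $\widehat{\mathcal R}$ vanishes on the locus ${\hhat G}_{i,j}=0$ ($i\neq j$) is false: from (\ref{R.cl}) one gets $(\widehat{\mathcal R})_{i,j}=\mp{\hhat G}_{i,i}{\hhat G}_{j,j}$ off the diagonal, so the specialized matrix is \emph{not} a scalar matrix plus a rank-one update, and the matrix-determinant lemma does not apply. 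The specialized matrix is $(\lambda+\lambda^{-1})(\lambda-1)\ID$ plus $\mathrm{diag}(x)\,N(\lambda)\,\mathrm{diag}(x)$ where $N(\lambda)$ has $1+\lambda$ on the diagonal and the constants $2$ and $2\lambda$ above and below it; the paper evaluates its determinant by expanding over the minors of the second summand, using that all $k\times k$ principal minors of $N(\lambda)$ coincide, which is what produces the elementary symmetric functions $\mathrm{SYM}_k(x_1^2,\dots,x_n^2)$. Your final conclusion is correct, but the computation you propose to reach it would not go through as stated.

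Second, your alternative route to the upper bound via Corollary \ref{cor-reduction} with $p=2$ does not work. The consistency condition $(e^{P_h})^p=1$ with $p=2$ forces either $e^{P_h}=1$, where the denominators in (\ref{recursion}) vanish, or $\Pi=0$ — and the paper's Remark following Corollary \ref{cor-reduction} explicitly points out that at $\Pi=0$ the elements ${\hhat G}_{i,j}$ become algebraically dependent (${\hhat G}_{i,i}{\hhat G}_{j,j}={\hhat G}_{i,j}+{\hhat G}_{j,i}$) and one does not obtain the complete $D_n$ algebra; your identification ``$p=2$, i.e.\ $\Pi^2=4$'' is not consistent with this. Moreover, even for a legitimate reduction, a count of $[np/2]$ Casimirs for $\mathfrak D_n^{(p)}$ from Proposition \ref{th-braid} does not transfer to a bound for the reduced algebra: a quotient can acquire additional Casimirs. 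The upper bound genuinely requires your first option, namely showing that at the diagonal point the brackets pair $({\hhat G}_{i,j},{\hhat G}_{j,i})$ nondegenerately (using ${\hhat G}_{i,i}^2\neq{\hhat G}_{j,j}^2$), so the symplectic leaf has dimension at least $n^2-n$; this is exactly the paper's argument via (\ref{eq:vicinity}).
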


\proof The fact that the central elements are generated by
$\det(\mathcal G(\lambda))$ follows from Theorem~\ref{th-Dn} and the
formula for the central elements of ${\mathfrak D}_n$. The fact that
this determinant takes the form given by the second row of
(\ref{central-D}) follows from the substitution (\ref{recursion})
and the fact that the matrix ${\widehat{\mathcal S}}$ has rank one,
so no more than one element of this matrix can enter the products
when expanding the determinant over products of entries, and all
other entries are proportional to $(\lambda-1)$. To prove the
algebraic independence of $c_1,\dots,c_n$, let us consider the
particular case where ${\hhat G}_{i,j}=0$ for $i\ne j$ and ${\hhat G}_{i,i}\ne0$
and ${\hhat G}^2_{i,i}\ne {\hhat G}^2_{j,j}$ for $i\ne j$. In this case, ${\mathcal G}(\lambda)$ becomes
$$
(\lambda+\lambda^{-1})(\lambda-1){\mathbb E}+\hbox{diag}_{{\hhat G}_{i,i}}
\left(
  \begin{array}{cccc}
    1+\lambda & 2\lambda & \dots & 2\lambda \\
    2 & 1+\lambda & \ddots & \vdots \\
    \vdots & \ddots & \ddots & 2\lambda \\
    2 & \dots & 2 & 1+\lambda \\
  \end{array}
\right)\hbox{diag}_{{\hhat G}_{i,i}}
$$
(here $\hbox{diag}_{{\hhat G}_{i,i}}$ is the diagonal matrix with the entries ${\hhat G}_{i,i}$),
and evaluating the determinant by the minors of the second matrix, we obtain that
\bea
\det{\mathcal G}(\lambda)&=& \sum_{k=0}^n (\lambda-1)^n(\lambda+\lambda^{-1})^{n-k}
\left[\frac{\lambda+1}{\lambda-1}\right]^{\frac{1-(-1)^n}{2}}\times
\nonumber
\\
&{}&\times \hbox{SYM\,}{}_k\{{\hhat G}^2_{1,1},\dots,{\hhat G}^2_{n,n}\},
\nonumber
\eea
where SYM$_k$ is the symmetrical function of order $k$ of $n$ pairwise distinct variables ${\hhat G}^2_{i,i}$, and
these functions are obviously algebraically independent for $k=1,\dots,n$.

It remains to prove that in $D_n$ there are {\em no more} than $n$ central elements.
For this let us consider the $D_n$ Poisson structure for ${\hhat G}_{i,j}$
treating the elements ${\hhat G}_{i,j}$ with $i,j=1,\dots,n$ as coordinates
of a linear space ${\mathbb C}^{n^2}$. The Poisson brackets then define locally a
structure of a bi-vector field, and if this structure has degeneracy at most $n$
in a vicinity of just one point, then there are no more than $n$ central elements
in the (global) Poisson algebra of $D_n$.

A convenient choice of such a point is again ${\hhat G}_{i,j}=0$ for $i\ne j$ and
${\hhat G}_{i,i}\ne0$ and ${\hhat G}^2_{i,i}\ne {\hhat G}^2_{j,j}$ for $i\ne j$.
Then, the Poisson brackets from \cite{Ch1},~\cite{Ch2} take the following form in the
{\em vicinity} of this point in the configuration space ${\mathbb C}^{n^2}$:
\bea
\{{\hhat G}_{i,j},{\hhat G}_{i,i}\}&=&2{\hhat G}_{j,j} +O({\hhat G}_{\alpha,\beta}),\quad \alpha\ne\beta,
\nn\\
\{{\hhat G}_{i,j},{\hhat G}_{j,j}\}&=&-2{\hhat G}_{i,i} +O({\hhat G}_{\alpha,\beta}),\quad \alpha\ne\beta,
\label{eq:vicinity}\\
\{{\hhat G}_{i,j},{\hhat G}_{j,i}\}&=&2{\hhat G}^2_{j,j}-2{\hhat G}^2_{i,i}\nn
\eea
and all other brackets are either zero or of order $O({\hhat G}_{\alpha,\beta})$ with $\alpha\ne\beta$
i.e., they are at least of the linear order in the variables that are small in the vicinity
of the given point.

The brackets (\ref{eq:vicinity}) for the variables ${\hhat G}_{i,j}$ with $i\ne j$ are obviously
non-degenerate (because these variables just come in $n(n+1)/2$
pairs $({\hhat G}_{i,j},{\hhat G}_{j,i})$ and
commute in the given approximation order $O(1)$ with such the variables from all other pairs). So,
in fact, the Poisson leaf has the dimension at least $n(n-1)=n^2-n$ in the vicinity of the given point, and
we therefore have no more than $n$ central elements of the $D_n$ algebra.
\endproof

\begin{remark}
In fact, if we disregard all terms of order $O({\hhat G}_{\alpha,\beta})$ with $\alpha\ne\beta$
in the brackets (\ref{eq:vicinity}), then the Poisson dimension of the obtained system is
exactly $n(n-1)$; this is a simple but nice exercise in linear algebra which we leave to the
reader. So, the highest Poisson leaf dimension of the $D_n$ algebra is $n^2-n=n(n-1)$.
\end{remark}

\subsubsection{Central elements for $D_2$ and $D_3$}
In the $D_2$ algebra,
we have the following two central elements:
\bea
C_1^{(2)}&=&{\widehat G}_{1,1}{\widehat G}_{2,2}-{\widehat G}_{1,2}-{\widehat G}_{2,1},
\nn\\
C_2^{(2)}&=&{\widehat G}_{1,2}{\widehat G}_{2,1}-{\widehat G}^2_{1,1}-{\widehat G}^2_{2,2}.
\nn
\eea

In the $D_3$ algebra case there are three central elements:
\be
C_1^{(3)}
\!\!
=
\!\!
{\widehat G}_{1,1}{\widehat G}_{2,2}{\widehat G}_{3,3}-{\widehat G}_{1,1}({\widehat G}_{3,2}+{\widehat G}_{2,3})
-{\widehat G}_{2,2}({\widehat G}_{1,3}+{\widehat G}_{3,1})
-{\widehat G}_{3,3}({\widehat G}_{2,1}+{\widehat G}_{1,2}),
\nn
\ee
\be
C_2^{(3)}=
{\widehat G}_{1,2}{\widehat G}_{2,3}{\widehat G}_{3,1}-{\widehat G}_{1,2}{\widehat G}_{2,1}
-{\widehat G}_{2,3}{\widehat G}_{3,2}-{\widehat G}_{3,1}{\widehat G}_{1,3}
+{\widehat G}^2_{1,1}+{\widehat G}^2_{2,2}+{\widehat G}^2_{3,3},
\nn
\ee
\bea
C_3^{(3)}
\!\!\!\!\!
&=&
\!\!\!\!\!
{\widehat G}_{1,3}{\widehat G}_{2,1}{\widehat G}_{3,2}-{\widehat G}_{1,2}{\widehat G}_{2,1}{\widehat G}^2_{3,3}
-{\widehat G}_{2,3}{\widehat G}_{3,2}{\widehat G}^2_{1,1}
-{\widehat G}_{3,1}{\widehat G}_{1,3}{\widehat G}^2_{2,2}
\nn\\
\!\!\!\!\!
&{}&
\!\!\!\!\!
+2{\widehat G}_{1,1}{\widehat G}_{2,2}({\widehat G}_{2,3}{\widehat G}_{3,1}-{\widehat G}_{2,1}-{\widehat G}_{1,2})
+2{\widehat G}_{2,2}{\widehat G}_{3,3}({\widehat G}_{3,1}{\widehat G}_{1,2}-{\widehat G}_{3,2}-{\widehat G}_{2,3})
\nn\\
\!\!\!\!\!
&{}&
\!\!\!\!\!
+2{\widehat G}_{3,3}{\widehat G}_{1,1}({\widehat G}_{3,1}{\widehat G}_{1,2}-{\widehat G}_{3,2}-{\widehat G}_{2,3})
+{\widehat G}^2_{2,1}
+{\widehat G}^2_{3,2}
+{\widehat G}^2_{1,3}
\nn\\
\!\!\!\!\!
&{}&
\!\!\!\!\!
-{\widehat G}_{1,2}{\widehat G}_{2,3}{\widehat G}_{1,3}
-{\widehat G}_{2,3}{\widehat G}_{3,1}{\widehat G}_{2,1}
-{\widehat G}_{3,1}{\widehat G}_{1,2}{\widehat G}_{3,2}
+{\widehat G}^2_{1,2}+{\widehat G}^2_{2,3}+{\widehat G}^2_{3,1}
\nn\\
\!\!\!\!\!
&{}&
\!\!\!\!\!
+({\widehat G}^2_{1,1}+1)({\widehat G}^2_{2,2}+1)
+({\widehat G}^2_{2,2}+1)({\widehat G}^2_{3,3}+1)
+({\widehat G}^2_{3,3}+1)({\widehat G}^2_{1,1}+1)
\nn
\eea

\section{Frobenius manifolds in the vicinity of a non semi-simple point}\label{se:FM}

In this section we interpret our $\mathfrak D_n$ algebra as the
Poisson algebra of the Stokes data of a Frobenius manifold in the
vicintiy of a non semi--simple point.

Frobenius manifolds where introduced by  Dubrovin  \cite{dubrovin}
as coordinate--free formulation of the famous
Witten--Dijkgraaf--Verlinde--Verlinde (WDVV) equations. Loosely
speaking Frobenius manifolds are $n$ dimensional complex manifolds
together with a smooth structure of Frobenius algebra on the tangent
space. This is a commutative associative algebra with unity and with
an invariant non--degenerate bilinear form (some other conditions
must be satisfied, such as the existence of a grading vector field;
a precise definition may be found in \cite{dubrovin}).

Semi--simple Frobenius manifolds of dimension $n$ can be realized as
the space of parameters ${\bf u}=(u_1,\dots,u_n)$ together with a
$n\times n$ skew-symmetric matrix function $V({\bf u})$ such that
the linear differential operator
$$
\Lambda(z):= \ddz -U -\frac{V({\bf u})}{z},
$$
$U$ being a diagonal matrix of entries $u_1,\dots,u_n$, has constant
monodromy data\cite{dubrovin}. Generically, the monodromy data of
$\Lambda(z)$ are encoded in the so--called {\it Stokes matrix}\/
$S$, an upper triangular matrix with $1$ on the diagonal (for a
general definition of the monodromy data see  \cite{MJ1,MJ2,MJU}).
Equivalently, semi--simple Frobenius manifolds are identified with
the space of monodromy preserving deformations of a $n$--dimensional
Fuchsian system:\footnote{We use here the calligraphic notation to
distinguish the $n\times n$ system and its monodromy matrices from
the $2\times2$ systems considered in the previous sections.}
\be
\label{eq:FB}
\frac{{\rm d}\Psi}{{\rm d}z}=\sum_{k=1}^n\frac{{\mathcal A}_k({\bf u})}{\lambda-u_k}\Psi,
\ee
where the matrices ${\mathcal A}_1({\bf u}),\dots,{\mathcal
A}_n({\bf u})$ are solutions of the Schlesinger equations
(\ref{schleq}) and have the form:
\be
\label{eq:BkV}
{\mathcal A}_k({\bf u})= -E_k\left(V({\bf u})+\frac{1}{2}\ID\right),\quad\hbox{where }
 (E_k)_{ij}=\delta_{ik}\delta_{kj}, \quad k=1,\dots,n.
\ee
Let $S$ be the Stokes matrix associated to the differential operator
$\Lambda(z):= \ddz -U -\frac{V}{z}$ If  ${\rm
rank}\left(G\right)=n$, where $G=S+S^T$, then the monodromy matrices
of this system (\ref{eq:FB}) have the form:
\be
\label{eq:monS}
{\mathcal M}_k=\ID-E_k \left(S+S^T\right)=\ID-E_k G,\quad k=1,\dots,n.
\ee
By using the Korotkin--Samtleben bracket and the relation
$$
\Tr\left({\mathcal M}_i {\mathcal M}_j\right)= n- 4 +S_{ij}^2,
$$
Ugaglia constructed the Poisson bracket among the entries $S_{ij}$ of the Stokes matrix \cite{Ugaglia}.
She obtained the same formula as (\ref{eq:NR}) with $S_{ij}$ in place of $G_{i,j}$ (up to a factor
$-\frac12$).

Our interpretation of the $\mathfrak D_n$ algebra as the Poisson
algebra of the Stokes data of a Frobenius manifold in the vicinity
of a non semi--simple point is based on the observation that non
semi--simple points correspond to the critical points of the
Schlesinger equations, i.e. to the clashing of two or more poles in
the Fuchsian system (\ref{eq:FB}).

We use the same notation as in Sec.~\ref{se:clash}. In particular we fix a number $\tilde n$, we set
$$
\tilde{\bf u}:=(u_1,\dots,u_{\tilde n-1}),\quad\hbox{and} \quad
u_j:=t v_j, \quad j=\tilde n,\dots,\tilde n+m-1=n
$$
and we define $A_i(\tilde{\bf u},t)$ for $i=1,\dots,\tilde n-1$ and
and $B_j(\tilde{\bf u},t)$ for $j=1,\dots,m=n-\tilde n+1$ as in
(\ref{eq:AtoB}), so that  the Schlesinger equations in the variable
$t$ assume the form (\ref{eq:sch-red}).

\begin{theorem}
Assume that $V$ is non resonant and that its eigenvalues
$\mu_1,\dots,\mu_n$ have real part
$\Re\mu_i\in\left(-\frac12,\frac12\right)$. If ${\rm
rank}\left(S+S^T\right)=n$, then there exist some matrix functions
${\mathcal A}_1^0(\tilde{\bf u}),\dots{\mathcal A}_{\tilde
n-1}^0(\tilde{\bf u})$ and ${\mathcal B}_1^0(\tilde{\bf
u})\dots,{\mathcal B}_{n-\tilde n+1}^0(\tilde{\bf u})$ such that the
Fuchsian system
\be
\label{eq:FB1}
\frac{{\rm d}\tilde\Phi}{{\rm d}z}=\sum_{k=1}^{\tilde n}\frac{\tilde {\mathcal A}_k}{\lambda-u_k}\tilde \Phi,
\ee
with
\be
\label{eq:BkVhat}
u_{\tilde n}=0,\quad
\tilde{\mathcal A}_i = {\mathcal A}_i^0,\hbox{ for } i=1,\dots,\tilde n-1,\quad\hbox{and }\,
\tilde{\mathcal A}_{\tilde n}=\sum_{j=1}^{n-\tilde n+1}{\mathcal B}^0_j.
\ee
has monodromy matrices ${\mathcal M}_1,\dots, {\mathcal M}_{\tilde n-1}, {\mathcal M}_h$ where
\be\label{eq:Mh}
{\mathcal M}_h
={\mathcal M}_{\tilde n}{\mathcal M}_{\tilde n+1}\dots{\mathcal M}_{n}.
\ee
Moreover, the estimates (\ref{eq:estimates1}), (\ref{eq:estimates2})
for the solutions to the Schlesinger equations (\ref{eq:sch-red})
hold true for $\sigma$ such that $\thi<\sigma<1$, where $\thi$ is
given by
$$
\thi=\max_{i,j=\tilde n,\dots,n} \left|\Re\left(\mu_i-\mu_j\right)\right|.
$$
\end{theorem}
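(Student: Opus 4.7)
The plan is to apply Theorem \ref{th:JMS} to the Frobenius--manifold Fuchsian system (\ref{eq:FB})--(\ref{eq:BkV}), using the Stokes data $S$ to fix the residues through (\ref{eq:monS}). First I would define ${\mathcal A}_i^0(\tilde{\bf u})$ for $i<\tilde n$ and ${\mathcal B}_j^0(\tilde{\bf u})$ for $j=1,\dots,m=n-\tilde n+1$ as the $t\to 0$ limits of the matrices $A_i({\bf u},t)$, $B_j({\bf u},t)$ introduced in (\ref{eq:AtoB}). The existence of these limits in a fixed sector is exactly the first conclusion of Theorem \ref{th:JMS}, so everything reduces to verifying its hypotheses in our specific setting.

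The only non-trivial hypothesis is the bound $\thi\in[0,1)$ for $\Lambda=\sum_{j=1}^m{\mathcal B}_j^0$. Since every residue has the rank-one form $-E_k(V+\tfrac12\ID)$, the matrix $\Lambda$ vanishes on the first $\tilde n-1$ coordinates, while its restriction to the last $m$ coordinates is minus the corresponding $m\times m$ block of $V+\tfrac12\ID$. I would argue that the zero eigenvalues arising from this rank deficit are gauge-trivial and do not affect the JMS iterative construction, so that the effective spectrum is that of the $m\times m$ block. Non-resonance of $V$ together with $\Re\mu_i\in(-\tfrac12,\tfrac12)$ then bounds the real-part spread of this block's spectrum by $\max_{i,j=\tilde n,\dots,n}|\Re(\mu_i-\mu_j)|<1$, which is precisely the $\thi$ of the statement; hence the interval $(\thi,1)$ is non-empty and supplies the allowed exponents $\sigma$ in (\ref{eq:estimates1})--(\ref{eq:estimates2}).

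With the hypothesis verified, Theorem \ref{th:JMS} immediately produces the matrices ${\mathcal A}_k^0$, ${\mathcal B}_j^0$, the limiting system (\ref{eq:FB1}) with residues (\ref{eq:BkVhat}) where $\tilde{\mathcal A}_{\tilde n}=\Lambda$, and the asymptotic estimates. The monodromy identification (\ref{eq:Mh}) is a consequence of the standard topological fact, already used in Section \ref{se:clash}, that a small loop encircling $0$ in $\mathbb P^1\setminus\{u_1,\dots,u_{\tilde n-1},0\}$ is homotopic, after pole separation, to the concatenation of $m$ loops around the nearby poles $u_{\tilde n}(t),\dots,u_n(t)$, so that the corresponding monodromy matrix equals the ordered product ${\mathcal M}_{\tilde n}\cdots{\mathcal M}_n$.

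The main obstacle, and the step that needs the most care, is reconciling the spectrum of the truncated $m\times m$ block of $V$ with the original eigenvalues $\{\mu_i\}$: in general the block of a matrix does not share its spectrum. I expect that one needs to exploit the specific rank-one structure (\ref{eq:monS}) of the ${\mathcal M}_k$, together with the full-rank assumption on $S+S^T$, to identify the relevant spectral data and to rule out degenerations via non-resonance. Alternatively, one may deform to a diagonal configuration along a path inside the non-resonant locus and use continuity of the JMS estimates; the identification of $\thi$ with the explicit combinatorial expression in the statement is then exactly what the strict inequality $\Re\mu_i\in(-\tfrac12,\tfrac12)$ is there to ensure.
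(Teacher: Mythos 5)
There is a genuine gap, and it sits exactly where the hypothesis ${\rm rank}(S+S^T)=n$ has to do its work. You read Theorem \ref{th:JMS} backwards: that theorem takes the limiting matrices $A_i^0$, $B_j^0$ as \emph{input} and constructs a Schlesinger solution with the asymptotics (\ref{eq:estimates1})--(\ref{eq:estimates2}); it does not assert that a given Schlesinger solution admits $t\to0$ limits, so "define ${\mathcal A}_i^0$, ${\mathcal B}_j^0$ as the $t\to0$ limits" is not licensed by anything you have quoted, and your proof never actually produces the matrices whose existence the theorem asserts. The paper supplies them by a separate argument: it shows that the group $\langle{\mathcal M}_1,\dots,{\mathcal M}_{\tilde n-1},{\mathcal M}_h\rangle$ is irreducible --- a vector fixed by ${\mathcal M}_h$ is, because of the triangular rank-one structure (\ref{eq:monS}), fixed by each of ${\mathcal M}_{\tilde n},\dots,{\mathcal M}_n$ separately, hence annihilated by $G=S+S^T$, contradicting ${\rm rank}\,G=n$ --- and then invokes Bolibruch's solvability of the Riemann--Hilbert problem for irreducible representations \cite{Bol2} to obtain the limiting Fuchsian system (\ref{eq:FB1})--(\ref{eq:BkVhat}). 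In your write-up the rank hypothesis is only mentioned speculatively at the end, so the central existence claim is unsupported.

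The second problem is one you flag yourself: the verification of the technical assumption (\ref{eq:techn}). The hypothesis of Theorem \ref{th:JMS} constrains the real-part spread of \emph{all} eigenvalues of $\Lambda=\sum_j{\mathcal B}_j^0$, so declaring the extra eigenvalues coming from the rank deficit "gauge-trivial and irrelevant to the JMS iteration" is not a substitute for computing them --- if they were genuinely $0$ while the block eigenvalues sit near $-1$, the spread could exceed $1$ and the theorem would not apply. The paper closes this by reading the spectrum of $\sum_{j=\tilde n}^n{\mathcal B}_j^0$ directly off the structure (\ref{eq:BkV}), obtaining $-\tfrac12,\dots,-\tfrac12,-\mu_{\tilde n}-\tfrac12,\dots,-\mu_n-\tfrac12$, after which non-resonance and $\Re\mu_i\in(-\tfrac12,\tfrac12)$ give $\thi<1$. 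Your closing paragraph correctly identifies this spectral identification as the missing step but offers only possible strategies, so the proposal does not constitute a proof. The topological identification of ${\mathcal M}_h$ as the ordered product (\ref{eq:Mh}) is fine and agrees with the paper.
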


\begin{remark}
Observe that the hypothesis that the eigenvalues $\mu_1,\dots,\mu_n$
have real part $\Re\mu_i\in\left(-\frac12,\frac12\right)$ is not
restrictive, in fact in the non--resonant case one can always
perform a sequence of elementary Schlesinger transformations to
shift the eigenvalues by integer and reduce to this case \cite{MJ2}.
\end{remark}

\proof  First let us prove that if the symmetric matrix $G:=S+S^T$   has rank $n$ then the group
$$
\tilde{\mathcal M}:=\langle{\mathcal M}_1,\dots, {\mathcal M}_{\tilde n-1}, {\mathcal M}_h\rangle
$$
is irreducible. This is a simple consequence of (\ref{eq:monS}) and
the definition of $ {\mathcal M}_h={\mathcal M}_{\tilde n}{\mathcal
M}_{\tilde n+1}\dots{\mathcal M}_{n}$. In fact assume by
contradiction that the group $\tilde{\mathcal M}$ admits an
invariant subspace and pick a vector ${\bf v}=(v_1,\dots,v_n)^T$ in
it. Then ${\bf v}$ is invariant w.r.t. the full monodromy group
${\mathcal M}:=\langle{\mathcal M}_1,\dots, {\mathcal M}_{\tilde n-1}, {\mathcal M}_{\tilde n}\dots,{\mathcal M}_n\rangle$.
In fact by definition, ${\mathcal M}_h$ is given  by
\begin{eqnarray}
{\mathcal M}_h &=& \ID -\sum_{i=\tilde n}^n E_{i} G + \sum_{i=\tilde n}^{n-1}\sum_{j=i+1}^n E_i G E_j G -
\sum_{i=\tilde n}^{n-2} \sum_{j=i+1}^{n-1}\sum_{l=j+1}^n E_i G E_j G E_l G+\nn\\
&&
\dots +(-1)^m E_{\tilde n} G E_{\tilde n+1}G\dots E_n G,
\end{eqnarray}
which is a matrix whose $j$-th row for  $j=\tilde n,\dots,n$ coincides with the $j$-th row of:
$$
E_j -E_j G + E_j G  \sum_{i=j+1}^n E_i G - E_j G \sum_{i=j+1}^{n-1} E_i G \sum_{l=i+1}^n E_l G+\dots E_j G E_{j+1} G\dots E_n G.
$$
Then ${\mathcal M}_h {\bf v}={\bf v}$ iff $M_j  {\bf v}={\bf v}$ for $j=\tilde
n,\dots,n$. This proves that ${\bf v}$ is invariant w.r.t. the full
monodromy group ${\mathcal M}$. But:
$$
{\mathcal M}_j {\bf v}={\bf v}\,\forall j=1,\dots,n\, \Leftrightarrow E_j G{\bf v}=0\,\forall j=1,\dots,n\, \Rightarrow G{\bf v}=0,
$$
which for ${\rm rank}(G)=n$ gives a contradiction as we wanted.

Existence of the matrix functions
${\mathcal A}_1^0(\tilde{\bf u}),\dots{\mathcal A}_{\tilde n-1}^0(\tilde{\bf u})$,
${\mathcal B}_1^0(\tilde{\bf u})\dots,{\mathcal B}_{n-\tilde n+1}^0(\tilde{\bf u})$
follows from the fact that the monodromy group is irreducible \cite{Bol2}. To prove that the new system (\ref{eq:FB1})
with conditions (\ref{eq:BkVhat})
has monodromy matrices ${\mathcal M}_1,\dots, {\mathcal M}_{\tilde
n-1}, {\mathcal M}_h$ and to obtain the estimates
(\ref{eq:estimates1}), (\ref{eq:estimates2}) we apply the clashing
theorem \ref{th:JMS}. The only assumption we have to verify is that
the eigenvalues $\lambda_1,\dots,\lambda_n$ of
$\tilde A_{\tilde n}$ satisfy the technical assumption (\ref{eq:techn}). Thanks to relation (\ref{eq:BkV}), we have that
 $$
{\rm eigenvalues}\left(\sum_{j=\tilde n}^n{\mathcal B}_j^0 \right)=-\frac12,\dots,-\frac12, -\mu_{\tilde n}-\frac{1}{2},\dots,-\mu_{ n}-\frac{1}{2},
$$
so that the technical condition is always satisfied when $V$ is non-resonant.
\endproof

This theorem states that in the vicinity of a non semi-simple point
the Frobenius manifold  is identified with the space of monodromy
preserving deformations of the $n$--dimensional Fuchsian system
(\ref{eq:FB1}) with the conditions (\ref{eq:BkVhat}).

\begin{theorem}
The Poisson algebra of the monodromy data of the system (\ref{eq:FB1}) is given by ${\mathfrak D}_n$.
\end{theorem}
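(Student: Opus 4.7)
My plan is to mirror the proof of Theorem~\ref{th:main}, replacing the $2\times2$ trace calculus there by its $n\times n$ analogue and using Ugaglia's identification of Stokes data with traces of products of monodromies in place of the direct relation $G_{i,j}=-\Tr(M_iM_j)$. The argument splits naturally into three stages.

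First, I would extend the K--S bracket to the merged monodromies of~(\ref{eq:FB1}). Proposition~\ref{lem-merging} is proved by an elementary manipulation of $\Omega$ and~(\ref{eq:KSP}) with no appeal to the $2\times2$ structure, so its proof carries over verbatim to give that $\{{\mathcal M}_1,\dots,{\mathcal M}_{\tilde n-1},{\mathcal M}_h\}$ satisfies (\ref{eq:KSP}) and that all powers ${\mathcal M}_h^{k}$ satisfy (\ref{MM-i-tilde})--(\ref{MM-tilde}). This endows the Stokes-data space of the clashed system with an explicit Poisson structure.

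Second, I would identify the abstract generators $G^{(k)}_{i,j}$ of $\mathfrak D_n$ with scalar observables on this Poisson space. Ugaglia's formula $\Tr({\mathcal M}_i{\mathcal M}_j)=n-4+S_{i,j}^{2}$ provides the identification at level $0$ and reproduces the $A_n$ sub-algebra embedded at level~$0$ in $\mathfrak D_n$; the natural level-$k$ generalisation uses the adjoint-invariant traces $\Tr\bigl({\mathcal M}_i{\mathcal M}_h^{k}{\mathcal M}_j{\mathcal M}_h^{-k}\bigr)$, from which $G^{(k)}_{i,j}$ is read off by the rank-one reduction that at $k=0$ recovers Ugaglia's identification. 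The rank-one factorisation ${\mathcal M}_k=\ID-E_kG$ makes this identification well-posed: each ${\mathcal M}_k$ satisfies a quadratic minimal polynomial $({\mathcal M}_k-\ID)({\mathcal M}_k-(1-G_{k,k})\ID)=0$, and together with $E_kGE_k=G_{k,k}E_k$ this reduces every long monodromy product to a polynomial in the $S_{i,j}$.

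Third, I would verify that the Poisson brackets of these observables are those of $\mathfrak D_n$. The cleanest route, given Subsec.~\ref{subse:yangian}, is to establish the classical reflection equation~(\ref{eq:molev-cl}) directly for the generating matrix ${\mathcal G}(\lambda)$ built from the $G^{(k)}_{i,j}$: since (\ref{eq:molev-cl}) is an auxiliary-space tensor identity that follows formally from (\ref{eq:KSP}), (\ref{MM-i-tilde}), (\ref{MM-tilde}) together with the rank-one relation for ${\mathcal M}_k$, this reduces the whole problem to a finite algebraic check, from which all of (\ref{eq:newDn})--(\ref{eq:newDn1}) follow. The main obstacle is the loss, in dimension larger than two, of the $SL(2)$ skein relation~(\ref{eq:skein}) that drove the $2\times2$ proof; its $n\times n$ substitute must be assembled from the quadratic minimal polynomial of each ${\mathcal M}_k$ together with the collapse law $E_kGE_k=G_{k,k}E_k$, and managing these reductions order by order in $\lambda^{-1}$ (equivalently, level by level) is where the technical burden lies.
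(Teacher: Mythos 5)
Your plan is essentially the paper's proof: restrict the Korotkin--Samtleben bracket to the adjoint-invariant traces $\Tr({\mathcal M}_i{\mathcal M}_h^k{\mathcal M}_j{\mathcal M}_h^{-k})$ and exploit the rank-one structure ${\mathcal M}_k=\ID-E_kG$. The one point you flag as the remaining ``technical burden'' --- finding an $n\times n$ substitute for the skein relation and managing the reductions level by level --- is where the paper's argument is in fact shortest, and the ingredient that closes it is an identity you do not mention: $G{\mathcal M}_h={\mathcal M}_h^{-T}G$, a consequence of (\ref{eq:monS}) and (\ref{eq:Mh}) (each ${\mathcal M}_k$ is an involution since $G_{kk}=2$, and $G{\mathcal M}_k={\mathcal M}_k^TG$). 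Expanding the rank-one factors collapses the trace in one line to
$$
\Tr({\mathcal M}_i{\mathcal M}_h^k{\mathcal M}_j{\mathcal M}_h^{-k})=n-4+(G{\mathcal M}_h^{k})_{ij}\,(G{\mathcal M}_h^{-k})_{ji},
$$
and the identity turns the last product into the perfect square $\bigl((G{\mathcal M}_h^{k})_{ij}\bigr)^2$. The paper then simply \emph{defines} $G^{(k)}_{i,j}:=(G{\mathcal M}_h^{k})_{ij}$ as a single matrix entry and computes its brackets directly, recovering (\ref{eq:newDn}) up to the factor $-\frac12$ exactly as in Ugaglia's level-$0$ computation; no skein substitute and no order-by-order reduction in $\lambda^{-1}$ is needed. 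Note that without that identity your step of ``reading off'' $G^{(k)}_{i,j}$ from the trace is not well-posed, since the trace only determines the product of the two a priori distinct entries $(G{\mathcal M}_h^{k})_{ij}$ and $(G{\mathcal M}_h^{-k})_{ji}$. Your proposed detour through the reflection equation (\ref{eq:molev-cl}) for the generating matrix is a legitimate alternative to the component-wise check, given the equivalence already established in Subsec.~\ref{subse:yangian}, but it still presupposes the same identification.
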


\proof
The Poisson algebra of the monodromy data of the system
(\ref{eq:FB1}) is given by the Korotkin--Samtleben bracket
restricted to the adjoint invariant functions such as
$$
\Tr({\mathcal M}_i {\mathcal M}_j),\quad\hbox{and}\quad
\Tr({\mathcal M}_i {\mathcal M}_h^k {\mathcal M}_j{\mathcal M}_h^{-k}),
$$
where now the monodromy matrices are $n\times n$. Due to (\ref{eq:monS}), it is easy to prove that
$$
\Tr({\mathcal M}_i {\mathcal M}_h^k {\mathcal M}_j{\mathcal M}_h^{-k})=
\Tr\left({\mathcal M}_j -E_i G + E_i G {\mathcal M}_h^k E_j G {\mathcal M}_h^{-k}\right)= n-4 + \left(G {\mathcal M}_h^{k}\right)^2_{ij}
$$
where the last step is due to the identity $G {\mathcal M}_h={\mathcal M}_h^{-T} G$,
 which is a straightforward consequence of (\ref{eq:monS}) and  (\ref{eq:Mh}). Defining
$$
G_{i,j}^{(k)} =  \left(G {\mathcal M}_h^{k}\right)_{ij}
$$
we get that the Poisson brackets among the elements $G_{i,j}^{(k)}$ coincide with (\ref{eq:newDn})
up to a factor $-\frac12$. \endproof

\subsection{Level-$p$ reduction in the case of Frobenius manifolds}

Here we study which restrictions on the elements $G_{i,j}$, $i,j=1,\dots,n$, we must impose
to ensure the satisfaction of the level-$p$ reduction condition ${\mathcal M}_h^p=\ID$ (\ref{level-p}).
\begin{prop}
The product of monodromy matrices ${\mathcal M}_h:={\mathcal M}_{\tilde n}{\mathcal M}_{\tilde n+1}\dots {\mathcal M}_{n}$
has the following block-matrix structure
\be
\def\mystrut{\phantom{$\biggl|$}}
{\mathcal M}_h=\left[\begin{tabular}{c|c}
        $\ID_{(\tilde n-1)\times (\tilde n-1)}$ & \mystrut${\mathbb O}$
        \\
        \hline
        $B$ & \mystrut$-{\widetilde S}^{-1}{\widetilde S}^T$
        \\
        \end{tabular}
            \right],
\label{Mh}
\ee
where $B$ is an $(n-\tilde n+1)\times(\tilde n -1)$ matrix whose entries are
polynomials in $G_{i,j}$ with $i=1,\dots,n$ and $j=\tilde n,\dots, n$ and
${\widetilde S}$ is the $(n-\tilde n+1)\times(n-\tilde n +1)$ upper-triangular matrix
with unities on the diagonal and with its $(i,j)$ entry above the diagonal equal to
$G_{\tilde n+i-1,\tilde n+j-1}$.
\end{prop}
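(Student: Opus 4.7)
The plan is to exploit the rank-one structure of each factor $\mathcal M_k=\ID-E_kG$. Since $E_k=e_ke_k^T$ vanishes outside row $k$, each $\mathcal M_k$ agrees with the identity in every row but row $k$. Because all factor indices $k$ in $\mathcal M_h=\mathcal M_{\tilde n}\cdots\mathcal M_n$ lie in $\{\tilde n,\dots,n\}$, the top $\tilde n-1$ rows of $\mathcal M_h$ are the identity rows, so the upper-left block is $\ID$ and the upper-right block is $\mathbb O$. Splitting $G$ into blocks according to the partition $(\tilde n-1)\,|\,m$ with $m=n-\tilde n+1$, the symmetry of $G$ and the definition of $\widetilde S$ give $G_{22}=\widetilde S+\widetilde S^T$. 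Each $\mathcal M_k$ is then block-lower-triangular with identity in the upper-left, so the bottom-right block of $\mathcal M_h$ is the product of the bottom-right blocks of the factors. After relabeling the bottom indices as $1,\dots,m$ and setting $\widetilde G:=\widetilde S+\widetilde S^T$, the statement reduces to proving
\[\widetilde S\,\prod_{k=1}^{m}\bigl(\ID_m-e_ke_k^T\widetilde G\bigr)=-\widetilde S^T.\]

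The main step is to establish this identity by analyzing the partial products $T_k:=\widetilde S\prod_{j=1}^{k}(\ID_m-e_je_j^T\widetilde G)$ and proving inductively that rows $1,\dots,k$ of $T_k$ are the negatives of the corresponding rows of $\widetilde S^T$, while rows $k+1,\dots,m$ coincide with the corresponding rows of $\widetilde S$. The base $T_0=\widetilde S$ is immediate. For the induction, the crucial observation is that column $k$ of $T_{k-1}$ equals $e_k$: its entry in row $i<k$ is $-\widetilde S_{k,i}=0$, its entry in row $k$ is $\widetilde S_{k,k}=1$, and its entry in row $i>k$ is $\widetilde S_{i,k}=0$, all by upper-triangularity of $\widetilde S$. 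Therefore $T_k=T_{k-1}-(T_{k-1}e_k)e_k^T\widetilde G=T_{k-1}-e_ke_k^T\widetilde G$, which alters only row $k$, replacing it with row $k$ of $\widetilde S$ minus row $k$ of $\widetilde G$, i.e.\ minus row $k$ of $\widetilde S^T$. The remaining rows are unchanged, so the inductive hypothesis propagates, and setting $k=m$ yields $T_m=-\widetilde S^T$ as required.

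Finally, the entries of $B$ are polynomials in those entries of $G$ that appear in at least one factor, namely in $G_{k,\ell}$ with $k\in\{\tilde n,\dots,n\}$; by symmetry of $G$ these are polynomials in $G_{i,j}$ with $j\in\{\tilde n,\dots,n\}$. The only nontrivial point is the inductive identification of the lower-right block with $-\widetilde S^{-1}\widetilde S^T$; the row-by-row argument above is clean because the rank-one correction $(T_{k-1}e_k)e_k^T\widetilde G$ collapses to $e_ke_k^T\widetilde G$ once one verifies $T_{k-1}e_k=e_k$, which is exactly the place where the upper-triangularity of $\widetilde S$ is used.
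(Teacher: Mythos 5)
Your proof is correct and follows essentially the same route as the paper: the observation that each $\mathcal M_k=\ID-E_kG$ differs from the identity only in row $k$ gives the block-lower-triangular structure, and your inductive identity $\widetilde S\,\prod_k(\ID_m-e_ke_k^T\widetilde G)=-\widetilde S^T$ is precisely Dubrovin's identity, which the paper proves in a footnote by the same row-by-row iteration (your key step $T_{k-1}e_k=e_k$ is the content of the paper's relations $SE_1=E_1$, $E_iS^TE_j=0$ for $i<j$, $E_iSE_j=\delta_{i,j}E_j$ for $i\ge j$).
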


\proof
From the explicit form of ${\mathcal M}_i$ (\ref{eq:monS}) it follows that only the last $n-\tilde n+1$ lines of
${\mathcal M}_h$ differ from the unit matrix; only $G_{i,j}$ with {\em both} $i$ and $j$ greater or equal $\tilde n$ contribute
to the expression in the lower right square block and we let ${\widetilde {\mathcal M}}_r$ with $r=\tilde n,\dots n$
denote the lower-right $(n-\tilde n+1)\times(n-\tilde n +1)$-matrix blocks of
the corresponding monodromy matrices ${\mathcal M}_r$. The proposition assertion then follows
from the Dubrovin's identity ${\widetilde {\mathcal M}}_{\tilde n}{\widetilde {\mathcal M}}_{\tilde n+1}\dots
{\widetilde {\mathcal M}}_{n}=-{\widetilde S}^{-1}{\widetilde S}^T$.\footnote{Recall that this formula follows from the
chain of matrix equalities: $SE_1=E_1$, $E_iS^TE_j=0$ for $i<j$, $E_iSE_j=\delta_{i,j}E_j$ for $i\ge j$;
then $S{\mathcal M}_1=S-E_1(S+S^T)=(E_2+\cdots+E_n)S-E_1S^T$. Multiplying this expression by ${\mathcal M}_2$
from the right and using the above formulas, we obtain $(E_3+\cdots+E_n)S-(E_1+E_2)S^T$ as the result; we then
multiply it by ${\mathcal M}_3$ from the right and continue until we obtain that $S{\mathcal M}_1{\mathcal M}_2\dots {\mathcal M}_n=
-(E_1+E_2+\cdots+E_n)S^T=-S^T$.}
\endproof

We now introduce the notation $\widetilde {\mathcal M}_h:=-{\widetilde S}^{-1}{\widetilde S}^T$. We then have the following
lemma.
\begin{lm}\label{lm:S}
The condition (\ref{level-p}) is satisfied if $(\widetilde {\mathcal M}_h)^p=\ID$ and the symmetric form
${\widetilde S}+{\widetilde S}^T$ is nondegenerate.
\end{lm}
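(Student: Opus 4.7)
The plan is to use the block-lower-triangular form of ${\mathcal M}_h$ from (\ref{Mh}) and compute ${\mathcal M}_h^p$ by an elementary block-matrix induction. Writing
$$
{\mathcal M}_h=\left(\begin{array}{cc} I_{\tilde n-1} & 0 \\ B & \widetilde{\mathcal M}_h \end{array}\right),
$$
a straightforward induction on $k$ gives
$$
{\mathcal M}_h^k=\left(\begin{array}{cc} I_{\tilde n-1} & 0 \\ (I+\widetilde{\mathcal M}_h+\cdots+\widetilde{\mathcal M}_h^{k-1})B & \widetilde{\mathcal M}_h^{k} \end{array}\right).
$$
Therefore ${\mathcal M}_h^p=\ID$ is equivalent to the pair of conditions $\widetilde{\mathcal M}_h^p=\ID$ (which is assumed) and
$$
\bigl(I+\widetilde{\mathcal M}_h+\widetilde{\mathcal M}_h^{2}+\cdots+\widetilde{\mathcal M}_h^{p-1}\bigr)B=0,
$$
so the whole task reduces to showing that the second factor on the left vanishes identically as a matrix.

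To handle the second factor I would use the factorisation $x^{p}-1=(x-1)(1+x+\cdots+x^{p-1})$ applied to $\widetilde{\mathcal M}_h$. Since $\widetilde{\mathcal M}_h^{p}=\ID$, we have
$$
(\widetilde{\mathcal M}_h-\ID)\bigl(\ID+\widetilde{\mathcal M}_h+\cdots+\widetilde{\mathcal M}_h^{p-1}\bigr)=0,
$$
so the image of $\ID+\widetilde{\mathcal M}_h+\cdots+\widetilde{\mathcal M}_h^{p-1}$ lies in $\ker(\widetilde{\mathcal M}_h-\ID)$. Now the crucial point is to compute this kernel using the explicit formula $\widetilde{\mathcal M}_h=-{\widetilde S}^{-1}{\widetilde S}^{T}$ from (\ref{Mh}):
$$
\widetilde{\mathcal M}_h-\ID=-{\widetilde S}^{-1}\bigl({\widetilde S}+{\widetilde S}^{T}\bigr).
$$
Since ${\widetilde S}$ is invertible (upper-triangular with unit diagonal), $\ker(\widetilde{\mathcal M}_h-\ID)=\ker({\widetilde S}+{\widetilde S}^{T})$, which is trivial by hypothesis. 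Consequently $\ID+\widetilde{\mathcal M}_h+\cdots+\widetilde{\mathcal M}_h^{p-1}=0$, and the required equation for the lower-left block follows a fortiori, irrespective of the exact form of the polynomial matrix $B$.

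The only potentially delicate point is that $\widetilde{\mathcal M}_h$ need not be diagonalisable a priori; however, the conclusion above follows purely from the polynomial identity $(\widetilde{\mathcal M}_h-\ID)(\ID+\widetilde{\mathcal M}_h+\cdots+\widetilde{\mathcal M}_h^{p-1})=0$ together with the invertibility of $\widetilde{\mathcal M}_h-\ID$ (equivalently: $1$ is not an eigenvalue, so multiplication by $\widetilde{\mathcal M}_h-\ID$ is injective), so no diagonalisation argument is needed. This route avoids analysing the complicated expression for $B$ in terms of the $G_{i,j}$, and is therefore the main obstacle only insofar as one must recognise that the non-degeneracy of ${\widetilde S}+{\widetilde S}^{T}$ translates directly into the invertibility of $\widetilde{\mathcal M}_h-\ID$.
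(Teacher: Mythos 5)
Your proposal is correct and follows essentially the same route as the paper: reduce $\mathcal M_h^p=\ID$ to the two block conditions, then use the factorisation $\widetilde{\mathcal M}_h^p-\ID=(\widetilde{\mathcal M}_h-\ID)(\ID+\widetilde{\mathcal M}_h+\cdots+\widetilde{\mathcal M}_h^{p-1})$ together with the identity $\widetilde{\mathcal M}_h-\ID=-\widetilde S^{-1}(\widetilde S+\widetilde S^T)$ to conclude from nondegeneracy. The only (harmless) difference is that you deduce the vanishing of the full sum $\ID+\widetilde{\mathcal M}_h+\cdots+\widetilde{\mathcal M}_h^{p-1}$ rather than only of its product with $B$, which is a slightly stronger statement obtained by the same argument.
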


\proof
From the explicit form of ${\mathcal M}_h$ we have that the condition (\ref{level-p}) is equivalent to the
simultaneous satisfaction of the two conditions:
\begin{itemize}
\item[(i)] \ $\bigl(\ID+ {\widetilde {\mathcal M}_h}+(\widetilde {\mathcal M}_h)^2+\cdots+(\widetilde {\mathcal M}_h)^{p-1}\bigr)B=0$ and
\item[(ii)] \ $(\widetilde {\mathcal M}_h)^p=\ID$.
\end{itemize}
Multiplying the first condition by $(\widetilde {\mathcal M}_h-\ID)$ and using the second condition we obtain the
identity. If the matrix $(\widetilde {\mathcal M}_h-\ID)$ is nondegenerate this implies the satisfaction of the
first condition. But this nondegeneracy condition is exactly the condition of the nondegeneracy of the
symmetric form ${\widetilde S}+{\widetilde S}^T$ (upon the multiplication by ${\widetilde S}$ from the
right).
\endproof

\begin{example}
Let us consider the case of {\em arbitrary} $m=n-\tilde n+1\ge2$ and $p=m+1$. Then, a convenient choice
is $G_{i,j}\equiv1$ for $\tilde n \le i<j\le n$. Indeed, we then have that the characteristic equation
$\det({\widetilde M}_h-\eta\ID)=0$ is equivalent to
$$
\det\left[\begin{array}{cccc}
                                                      1+\eta & \eta & \cdots & \eta \\
                                                      1 & 1+\eta & \cdots & \eta \\
                                                      \vdots & \vdots & \ddots & \vdots \\
                                                      1 & 1 & \cdots & 1+\eta
                                                    \end{array}
\right]=1+\eta+\cdots+\eta^{m}=0,
$$
and we have exactly $m=p-1$ single roots that are $e^{2\pi i k/p}$, $k=1,\dots,p-1$; each root corresponds to a
nondegenerate eigenvalue with the
corresponding eigenvector of ${\widetilde M}_h$, and we therefore have that all the conditions of Lemma~\ref{lm:S}
are satisfied.
\end{example}

\subsection{Quantum cohomology of $\mathbb C\mathbb P^2$ and $\mathbb C\mathbb P^3$}

The fact that the Poisson algebra  (\ref{eq:NR})  coincides with the
Ugaglia bracket on the space of Stokes matrices of a semi--simple
Frobenius manifold poses the natural question of characterizing the
special class of semi--simple Frobenius manifolds coming from
Teichm\"uller theory. This is a highly non trivial problem that we
postpone to subsequent work \cite{ChMa}.  In this section we
concentrate on two particular cases: $A_3$ in the limit
$Z_1=Z_2=Z_3=0$ and $A_4$ in the limit
$Z_1=-Z_2=Z_3=-Z_4=\frac{\log(2)}{2}$, $Y=0$, let us dub them
$A_3^\ast$ and $A_4^\ast$ respectively. Through the identification
of the matrix $\mathcal A$ defined in (\ref{A-matrix}) with the
Stokes matrix $S$ associated to the Frobenius manifold structure, we
build a link between the $A_3^\ast$ and $A_4^\ast$ and the quantum
cohomology rings $H^\ast(\mathbb C\mathbb P^2)$ and $H^\ast(\mathbb
C\mathbb P^3)$ respectively. In fact:

\begin{theorem}
The matrices $\mathcal A$ in the cases $A_3^\ast$ and  $A_4^\ast$ have the form
$$
\left(\begin{array}{ccc}
1&3&3\\
0&1&3\\
0&0&1\\
\end{array}\right)
\quad \hbox{and}\quad
\left(\begin{array}{cccc}
1&4&6&4\\
0&1&4&6\\
0&0&1&4\\
0&0&0&1\\
\end{array}\right),
$$
which coincide with the Stokes matrix of the respective quantum cohomology
rings $H^\ast(\mathbb C\mathbb P^2)$ \cite{dubrovin} and
$H^\ast(\mathbb C\mathbb P^3)$ \cite{guzzetti}.
\end{theorem}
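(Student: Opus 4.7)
The plan is to reduce the theorem to a direct matrix computation followed by comparison with the Stokes matrices for $H^\ast(\mathbb C\mathbb P^2)$ and $H^\ast(\mathbb C\mathbb P^3)$ available in the literature. Since the entries of $\mathcal A$ in (\ref{A-matrix}) are by definition $G_{i,j}=-\Tr(\gamma_i\gamma_j)$, and the $\gamma_i$ are given explicitly by (\ref{eq:basisn}) as words in the $2\times 2$ matrices $R$, $L$, $F$, $X_{Z_i}$, $X_{Y_j}$, there is essentially nothing to do besides substitute the specialised shear coordinates and evaluate a handful of traces of products of $2\times 2$ matrices.

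For $A_3^\ast$ one substitutes $Z_1=Z_2=Z_3=0$ into (\ref{eq:basisn}); because the graph $\Gamma_{0,1,3}$ has no internal edges, there are no $Y$ variables to specify. With $Z_i=0$ the edge matrix $X_{Z_i}$ degenerates to $F$, so the three generators $\gamma_1$, $\gamma_2$, $\gamma_3$ reduce to short words in $R$, $L$, $F$. A direct calculation of $-\Tr(\gamma_i\gamma_j)$ for the three pairs $i<j$ yields the value $3$ in each case, producing the first claimed matrix. Dubrovin's classical computation \cite{dubrovin} of the Stokes matrix of the Frobenius structure underlying $H^\ast(\mathbb C\mathbb P^2)$ gives the same matrix, completing this half of the theorem.

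For $A_4^\ast$ the same strategy applies with a little more algebra. Substituting $Z_1=-Z_2=Z_3=-Z_4=\tfrac{1}{2}\log 2$ and $Y_1=0$ into (\ref{eq:basisn}), the matrices $X_{Z_i}$ have entries $\pm 2^{\pm 1/4}$, whereas $X_{Y_1}=F$. One then forms the four $\gamma_i$ and computes the six quantities $-\Tr(\gamma_i\gamma_j)$ for $1\le i<j\le 4$; the irrational powers of $2$ cancel so as to leave the integers $4$, $6$, $4$, $4$, $6$, $4$, which are precisely the binomial coefficients $\binom{4}{j-i}$ in the correct entries of $\mathcal A$. Comparison with Guzzetti's computation \cite{guzzetti} of the Stokes matrix of $H^\ast(\mathbb C\mathbb P^3)$ then finishes the proof.

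The main obstacle is bookkeeping rather than conceptual: one has to track the overall $-1$ prefactor in (\ref{eq:basisn}), the minus sign entering through $F^2=-\ID$, the signs distinguishing $R$ from $L$, and the overall minus in $G_{i,j}=-\Tr(\gamma_i\gamma_j)$, so that the final entries come out as positive integers equal to the prescribed binomial coefficients. A pleasant sanity check is that in both specialisations the perimeter (\ref{eq:per}) vanishes, $P=0$, so by (\ref{eq:Minf}) the monodromy at infinity $M_\infty$ is the parabolic Jordan block; this is consistent with the non-semisimple/resonant character of the corresponding Frobenius-manifold point on which $\mathbb C\mathbb P^2$ and $\mathbb C\mathbb P^3$ sit.
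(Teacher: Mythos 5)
Your proposal is correct and is essentially the paper's own proof, which consists of exactly this: substitute the stated shear coordinates into the explicit words (\ref{eq:basisn}) for the $\gamma_i$ and evaluate $G_{i,j}=-\Tr(\gamma_i\gamma_j)$ (e.g.\ at $Z_1=Z_2=0$ one finds $\Tr(\gamma_1\gamma_2)=-3$, and at $Z_1=-Z_2=\tfrac12\log 2$ one finds $\Tr(\gamma_1\gamma_2)=-(2+e^{Z_1-Z_2})=-4$), then compare with the Stokes matrices of Dubrovin and Guzzetti. The only slip is that at $Z=0$ one has $X_{0}=-F$ rather than $F$, but this is harmless since each edge matrix occurs an even number of times in every $\gamma_i$.
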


\proof The proof of this theorem is straightforward, it is simply
based on plugging in $G_{i,j}=\Tr(\gamma_i\gamma_j)$ the appropriate
values of the shear coordinates.\endproof

This link with the quantum cohomology of the projective space is
only valid in low dimension, i.e. for $n=3,4$. However it gives some
insight on the nature of the solutions of the Schlesinger equations
related to Teichm\"uller theory: we expect them to be
transcendental. In fact, on the one side we observe that generically
the monodromy group $\langle\gamma_1,\dots,\gamma_n\rangle$ is
irreducible and none of the matrices $\gamma_i$ is a multiple of the
identity, therefore there is no evidence that these solutions should
be special functions \cite{mazz}. On the other side, the Schlesinger
equations associated to $H^\ast(\mathbb C\mathbb P^2)$ are solved in
terms of Painlev\'e~VI transcendents. For the moment we can only
prove the following:

\begin{theorem}
The solutions to the $2\times 2$ Schlesinger equations (\ref{schleq}) with monodromy matrices
$\gamma_1,\dots,\gamma_n$ given by (\ref{eq:basisn}) are not algebraic in $u_1,\dots,u_n$.
\end{theorem}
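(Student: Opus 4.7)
The plan is a proof by contradiction: algebraicity of the Schlesinger solutions forces the braid-group orbit of the monodromy data to be finite, whereas the Fuchsian structure encoded in (\ref{eq:basisn}) forces that orbit to be infinite.

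For the first direction, if the residues $A_i(\mathbf u)$ are algebraic on $X_n$ then they have only finitely many branches under analytic continuation along any loop. By the description recalled in Section~3.1 (after \cite{DM}), such analytic continuation is represented by the pure braid group acting on the monodromy tuple $(M_1,\dots,M_n)\bmod\mathcal D$ via (\ref{eq:braidDM}), so the resulting orbit in $\mathcal M/\mathcal D$ must be finite. The trace invariants $G_{i,j}=-\Tr(M_iM_j)$ are $\mathcal D$-invariant and transform under $\beta_{i,i+1}$ by the rule (\ref{eq:braidch})---a compatibility that follows from the skein relation (\ref{eq:skein}) together with $\Tr M_i=0$, exactly as in the proof of Lemma~\ref{lm:techKS}. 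Hence the orbit of $(G_{i,j})\in\mathbb C^{n(n-1)/2}$ under the subgroup $\langle\beta_{1,2}^{\,2},\dots,\beta_{n-1,n}^{\,2}\rangle\subset\mathcal P_n$ would be finite.

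For the second direction, any real Fuchsian structure in (\ref{eq:basisn}) makes every product $\gamma_i\gamma_j$ hyperbolic, so $G_{i,j}=2\cosh(\ell_{i,j}/2)\geq 2$ with equality only in degenerate limits. Setting $c:=G_{1,2}$, the rule (\ref{eq:braidch}) acts on the pair $(G_{1,3},G_{2,3})$ as the linear map with matrix $\bigl(\begin{smallmatrix}c & -1\\ 1 & \phantom{-}0\end{smallmatrix}\bigr)$, whose characteristic roots $\lambda_\pm=(c\pm\sqrt{c^2-4})/2$ are real and distinct with $|\lambda_+|>1>|\lambda_-|$ whenever $c>2$. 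Writing $a_k:=\beta_{1,2}^k(G_{1,3})=A\lambda_+^k+B\lambda_-^k$ one finds $A(\lambda_+-\lambda_-)=\lambda_+G_{1,3}-G_{2,3}$ and $B(\lambda_+-\lambda_-)=G_{2,3}-\lambda_-G_{1,3}$; if both were to vanish, subtracting would force $(\lambda_+-\lambda_-)G_{1,3}=0$ and hence $G_{1,3}=0$, contradicting $G_{1,3}\geq 2$. Thus $(A,B)\ne(0,0)$, so the $a_{2k}$ take infinitely many distinct values along the pure-braid subsequence $\beta_{1,2}^{\,2k}\in\mathcal P_n$, contradicting the conclusion of the first step.

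The main obstacle I anticipate is the boundary case $c=2$, corresponding to degenerate shear coordinates for which $\gamma_1\gamma_2$ becomes parabolic and the eigenvalues $\lambda_\pm$ collide; there the one-variable estimate above fails. I would bypass this either by replacing $(G_{1,2},G_{1,3})$ with some other pair $(G_{i,i+1},G_{i,j})$ whose associated geodesic length is strictly positive (which holds on a dense open subset of Teichm\"uller space and is enough to handle every interior point), or, more robustly and in the spirit of \cite{DM}, by estimating the word growth of the braid action on the entire tuple of trace coordinates via a Tits-alternative style argument to exclude finite orbits uniformly for all Fuchsian initial data. The identification of the analytic-continuation braid action (\ref{eq:braidDM}) with (\ref{eq:braidch}) on the trace coordinates is already established in the proof of Lemma~\ref{lm:techKS}, so no additional work is needed there.
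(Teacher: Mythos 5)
Your proof is correct and follows essentially the same route as the paper: algebraicity would force a finite pure-braid-group orbit of the monodromy data, while $|G_{i,j}|>2$ forces that orbit to be infinite (for this last step the paper simply cites \cite{DM}, whereas you verify it directly via the linear recursion for the iterates of $\beta_{1,2}$ on the pair $(G_{1,3},G_{2,3})$, which is a welcome explicit substitute). The degenerate case $c=G_{1,2}=2$ that you flag as a potential obstacle never actually occurs: for distinct orbifold points the product $\gamma_i\gamma_j$ is a hyperbolic translation along the geodesic joining the two fixed points, whose translation length is strictly positive, so $G_{i,j}=2\cosh(\ell_{i,j}/2)>2$ strictly for all real shear coordinates.
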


\proof
This is a simple consequence of the fact that the analytic
continuation of the solutions to the Schlesinger equations is  given
by the action of the braid group on the monodromy matrices
(\ref{eq:braidDM}), which in terms of
$G_{i,j}=-\Tr(\gamma_i\gamma_J)$ is given by formulae
(\ref{eq:braidch}). In the geometric case, i.e. when the monodromy
matrices $\gamma_1,\dots,\gamma_n$ are given by (\ref{eq:basisn}),
it is easy to verify that $|G_{i,j}|>2$ and the braid group orbits
are therefore infinite as proved in \cite{DM}. Therefore the
corresponding solution to the Schlesinger equations cannot be
algebraic.
\endproof



\setcounter{section}{0}

\def\thetheorem{A.\arabic{theorem}}
\def\theprop{A.\arabic{prop}}
\def\thelemma{A.\arabic{lm}}
\def\thecor{A.\arabic{cor}}
\def\theexam{A.\arabic{exam}}
\def\theremark{A.\arabic{remark}}
\def\theequation{A.\arabic{equation}}

\appendix{Monodromy data}\label{se:mon-data}

A general description of monodromy data of linear systems of ODE can
be found in \cite{MJ1,MJ2,MJU}. Here we remind the notations and
definitions for $m\times m$ Fuchsian systems. We work in the basis where $A_\infty$ is diagonal.

Fix a real number $\varphi\in[0,2\pi[$ and consider the open subset ${\mathcal U}\in X_n$ such that
the rays $L_1,...,L_n$ defined by
\begin{equation}\label{eq:rays}
L_j:=\{u_j+i\rho e^{-i\varphi}\,|\, 0\leq\rho<\infty\}
\end{equation}
do not intersect. We assume that the points
$(u_1,\dots,u_n)\in{\mathcal U}$  are ordered in such a way that the
rays $L_1,\dots,L_n$ exit from infinity in clockwise order.

Let us fix a fundamental matrix solution near all singular points
$u_1$, \dots, $u_n$, $\infty$. To this end we  fix branch cuts on
the complex plane along the rays $L_1,\dots,L_n$ and choose the
branches of logarithms $\log(\lambda-u_1)$, \dots, $\log
(\lambda-u_n)$, $\log \lambda^{-1}$.  Assume $A_1,\dots,A_n$ to be
diagonalizable
$$
A_i = \Gamma_i^{-1}\Theta_i \Gamma_i,$$
where $\Theta_i$ is a diagonal matrix.

We can fix  fundamental matrices analytic on
\begin{equation}\label{domain}
\lambda\in {\mathbb C} \setminus \cup_{k=1}^n L_k,
\end{equation}
as follows:
\begin{equation}
\Phi_k({\lambda})=\left(\Gamma_k +{\mathcal O}({\lambda}-u_k)\right)
({\lambda}-u_k)^{\Theta_i}, \quad \lambda\to u_k, \quad
k=1, \dots, n,
\label{N6.1}
\end{equation}
and
\begin{equation}
\Phi({\lambda}):=
\Phi_\infty({\lambda})=\left(\ID+{\mathcal O}({1\over \lambda})\right)
{\lambda}^{-{\Theta_\infty}}{\lambda}^{-{R}^{(\infty)}},\quad\hbox{as}\quad
{\lambda}\rightarrow\infty,
\label{M3}
\end{equation}
where the two linear operators ${\Theta_\infty}$, $R^{(\infty)}$ are an {\it admissible pair} i.e.
the operator ${\Theta_\infty}$ is semisimple and the operator $R^{(\infty)}$ is
nilpotent and they satisfy the relation
\begin{equation}\label{lev1}
e^{2\pi i \Theta_\infty} R^{(\infty)} = R^{(\infty)}\, e^{2 \pi i \Theta_\infty}.
\end{equation}
Define the {\it connection matrices}  by
\begin{equation}
\Phi_\infty({\lambda})=\Phi_k({\lambda}){ C}_k,\label{N3}
\end{equation}
where $\Phi_\infty (\lambda)$ is to be analytically continued in a vicinity of the
pole $u_k$ along the positive side of the branch cut $L_k$.

The monodromy matrices ${M}_k$, $k=1,\dots,n,\infty$ are defined
with respect the basis $l_1,\dots,l_{n}$ of loops in the
fundamental group
$$
\pi_1\left(\mathbb C\backslash\{u_1,\dots u_{n}\},
\infty\right),
$$
chosen by imposing that the  small loops $l_1,\dots,l_n$ encircle
counter--clockwise the points $u_1,\dots,u_n$. Denote $l_j^*
\Phi_\infty(\lambda)$ the result of  analytic continuation of the
fundamental matrix $\Phi_\infty(\lambda)$ along the loop $l_j$. The
monodromy matrix ${M}_j$ is defined by
\begin{equation}
\label{momo}
l_j^* \Phi_\infty (\lambda) = \Phi_\infty (\lambda) {M}_j, ~~j=1, \dots, n.
\end{equation}
The monodromy matrices satisfy
\begin{equation}
{M}_\infty {M}_{1} \cdots {M}_n=\ID,\qquad
{M}_\infty=C_\infty^{-1}\exp\left(2\pi i{A_\infty}\right)
\exp\left(2\pi i{R^{(\infty)}}\right)C_\infty,
\label{N6}
\end{equation}
for some constant matrix $C_\infty$,
because of the choice of the ordering of the branch cuts $L_1$, \dots, $L_{n}$. Clearly one has
\begin{equation}
{M}_k={ C}_k^{-1} \exp\left(2\pi i \Theta_k\right)
{ C}_k,\qquad k=1,\dots,n.
\label{N4}
\end{equation}

The collection of the local monodromy data $\Theta_1,\dots,\Theta_n,\Theta_\infty$, $R^{(\infty)}$ together
with the central connection matrices $C_1,\dots,C_n,C_\infty$ uniquely fix the Fuchsian system with given poles.
They are  defined up to an equivalence that we now describe. The eigenvalues ${\Theta_\infty}$
of the matrix ${A_\infty}$
are defined up to permutations. Fixing the order of the eigenvalues, we define
the class of equivalence of the nilpotent part $R^{(\infty)}$ and of the connection matrices
$C_1,\dots,C_n,C_\infty$ by factoring out the transformations of the form
\begin{eqnarray}\label{class-mon}
&&
C_k \mapsto G_k ^{-1}C_k G_\infty, \quad k=1, \dots, n,
\quad C_\infty \mapsto G_\infty ^{-1}C_k G_\infty
\end{eqnarray}
where $G_k\in{\rm GL}(n,{\mathbb C})$ is such that
$$
[G_k,\Theta_k]=0
$$
and $G_\infty\in GL(n,{\mathbb C})$ is such that
\begin{equation}\label{cinf0}
\lambda^{-{\Theta_\infty}}  G \,  \lambda^{{\Theta_\infty}}
=G_0 +\frac{G_1}{\lambda} +\frac{G_2}{\lambda^2}+\dots ,
\end{equation}
for some constant matrices $G_0,G_1,G_2....$.

Observe that the monodromy matrices (\ref{N4}) will transform by a simultaneous conjugation
$$
M_k \mapsto G_\infty^{-1} M_k G_\infty, \quad k=1, 2, \dots, n, \infty.
$$
\begin{df} The class of equivalence (\ref{class-mon}) of the collection
\begin{equation}\label{m-data}
\Theta_1,\dots,\Theta_n,\Theta_\infty, {R^{(\infty)}},  C_1, \dots, C_n,C_\infty
\end{equation}
is called {\it monodromy data} of the Fuchsian system with respect
to a fixed ordering of the eigenvalues of the matrix $A_\infty$ and
a given choice of the branch cuts.
\end{df}

\begin{lm}
Two Fuchsian systems of the form (\ref{N1in}) with the same poles
$u_1,\dots,u_{n}$ and $\infty$ and the same matrix $A_\infty$
coincide, modulo conjugations by matrice $G$ such that
$[A_\infty,G]=0$, if and only if they have the same monodromy data
with respect to the same system of branch cuts $L_1,\dots,L_{n}$.
\label{lm2.8}
\end{lm}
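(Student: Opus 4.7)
The plan is a standard Liouville-type argument comparing two fundamental matrices normalized at infinity. The ``only if'' direction is immediate: if $\tilde\Phi(\lambda) = G^{-1}\Phi(\lambda) G$ with $[G, A_\infty]=0$, then the asymptotic form (\ref{M3}) is preserved up to right multiplication by $G$, the local forms at each finite pole $u_k$ transform into each other by right multiplication by $G$, and so do the connection matrices; this places the two collections of data in the same equivalence class (\ref{class-mon}).

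For the ``if'' direction I would proceed as follows. Choose representatives of the monodromy data that agree literally for the two systems, let $\Phi(\lambda)$ and $\tilde\Phi(\lambda)$ be the corresponding fundamental matrices normalized at infinity as in (\ref{M3}), and consider the ratio
$$
\Psi(\lambda) := \Phi(\lambda)\,\tilde\Phi(\lambda)^{-1}.
$$
First I would verify that $\Psi$ is single-valued on $\mathbb{C}\setminus\{u_1,\dots,u_n\}$, since both factors have the same monodromy $M_k$ around each puncture. Next, near each finite pole $u_k$ I would use $\Phi=\Phi_k C_k$ and $\tilde\Phi=\tilde\Phi_k C_k$ with the \emph{same} $C_k$, so that the common singular factor $(\lambda-u_k)^{\Theta_k}$ cancels and
$$
\Psi(\lambda)=\Phi_k(\lambda)\,\tilde\Phi_k(\lambda)^{-1}=\Gamma_k\tilde\Gamma_k^{-1}+O(\lambda-u_k),
$$
holomorphic at $u_k$. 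Hence $\Psi$ extends to an entire matrix-valued function.

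At infinity, a similar cancellation of the common factor $\lambda^{-\Theta_\infty}\lambda^{-R^{(\infty)}}$ between $\Phi$ and $\tilde\Phi^{-1}$ yields $\Psi(\lambda)=\ID+O(1/\lambda)$, and Liouville's theorem forces $\Psi\equiv\ID$, so that the two systems coincide exactly. If one instead chose inequivalent representatives related by (\ref{class-mon}) via some $G_\infty$ compatible with (\ref{cinf0}), the same calculation would yield $\Psi\equiv G_\infty$, still a constant matrix. Comparing $\Phi'\Phi^{-1}$ with $\Psi\,\tilde\Phi'\tilde\Phi^{-1}\Psi^{-1}$ (legitimate because $\Psi$ is constant) then delivers $A_k=G_\infty\tilde A_k G_\infty^{-1}$ for every $k$, while the asymptotic condition (\ref{cinf0}) forces $[G_\infty,\Theta_\infty]=[G_\infty,A_\infty]=0$, completing the argument.

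The main obstacle is the cancellation at infinity: one must verify that $\lambda^{-\Theta_\infty}\lambda^{-R^{(\infty)}}\lambda^{R^{(\infty)}}\lambda^{\Theta_\infty}$ genuinely equals the identity and that no residual polynomial in $\log\lambda$ survives in $\Psi$ in the neighbourhood of infinity. This uses in an essential way the commutation relation (\ref{lev1}) and the admissibility of the pair $(\Theta_\infty,R^{(\infty)})$, together with a careful characterization of which constant matrices $G_\infty$ are admitted by (\ref{cinf0})—precisely those commuting with $\Theta_\infty$, which is the condition $[G_\infty,A_\infty]=0$ appearing in the statement.
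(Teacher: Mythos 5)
Your argument is the standard Liouville-type proof of this uniqueness statement, and it is essentially correct; note that the paper itself states Lemma \ref{lm2.8} without proof, as a fact recalled from the isomonodromic-deformation literature (Jimbo--Miwa--Ueno, Bolibruch), so there is no in-paper argument to compare against. The structure you give --- single-valuedness of $\Psi=\Phi\tilde\Phi^{-1}$ from equality of monodromies, removability of the singularities at each $u_k$ from equality of the $C_k$ and cancellation of the adjacent factors $(\lambda-u_k)^{\Theta_k}(\lambda-u_k)^{-\Theta_k}$, and boundedness at $\infty$ forcing $\Psi$ to be constant --- is exactly what is needed, and the worry about residual $\log\lambda$ terms at infinity is unfounded precisely because $\lambda^{-\Theta_\infty}\lambda^{-R^{(\infty)}}$ and its literal inverse sit adjacent to one another in the product. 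The one point you should repair is the final justification: condition (\ref{cinf0}) does \emph{not} in general force $[G_\infty,\Theta_\infty]=0$; it only forces $(G_\infty)_{ij}=0$ unless $\theta_j-\theta_i$ is a nonpositive integer, which coincides with commutativity only in the non-resonant case. The commutation $[G_\infty,A_\infty]=0$ demanded by the lemma's conclusion follows instead directly from the hypothesis that the two systems have the \emph{same} matrix $A_\infty$: summing the residues in $A_k=G_\infty\tilde A_kG_\infty^{-1}$ gives $A_\infty=G_\infty A_\infty G_\infty^{-1}$. With that substitution the proof is complete.
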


\def\thetheorem{B.\arabic{theorem}}
\def\theprop{B.\arabic{prop}}
\def\thelemma{B.\arabic{lm}}
\def\thecor{B.\arabic{cor}}
\def\theexam{B.\arabic{exam}}
\def\theremark{B.\arabic{remark}}
\def\theequation{B.\arabic{equation}}

\appendix{Proof of Jacobi identities for the $\mathfrak D_n$ bracket}\label{se:Jacobi}

In this appendix, we prove the Jacobi identity:
\be
\bigl\{\left\{{\mathcal G}_{j,i}(\lambda),{\mathcal G}_{p,l}(\mu)\right\},{\mathcal G}_{q,r}(\nu)\bigr\}
\label{jiplqr}+\hbox{ cyclic permutations } =0
\ee
for the bracket (\ref{eq:Yangian}).
We proceed in three steps.

\noindent{\bf 1.} Without restricting the generality, we segregate all the terms containing the
terms ${\mathcal G}_{j,r}$ with all possible arguments. There are three cases.

{\bf 1a.} Terms in $\mathcal G_{j,r}\mathcal G_{q,l}\mathcal G_{p,i}$ with any choice of the arguments.
In this case, only the first two lines of formula
(\ref{eq:Yangian}) contribute. Then, with accounting for the cyclic
permutations, we have the sum of twelve terms
\begin{eqnarray}
&&
\left(\epsilon(j-p)+\frac{\lambda+\mu}{\lambda-\mu}\right)
\left(\epsilon(j-q)+\frac{\mu+\nu}{\mu-\nu}\right)
{\mathcal G}_{p,i}(\lambda) {\mathcal G}_{q,l}(\mu){\mathcal G}_{j,r}(\nu)
\nn\\
&+&
\left(\epsilon(j-p)+\frac{\lambda+\mu}{\lambda-\mu}\right)
\left(\epsilon(l-r)-\frac{\mu+\nu}{\mu-\nu}\right)
{\mathcal G}_{p,i}(\lambda) {\mathcal G}_{q,l}(\nu){\mathcal G}_{j,r}(\mu)
\nn\\
&+&
\left(\epsilon(i-l)-\frac{\lambda+\mu}{\lambda-\mu}\right)
\left(\epsilon(j-q)+\frac{\lambda+\nu}{\lambda-\nu}\right)
{\mathcal G}_{p,i}(\mu) {\mathcal G}_{q,l}(\lambda){\mathcal G}_{j,r}(\nu)
\nn\\
&+&
\left(\epsilon(i-l)-\frac{\lambda+\mu}{\lambda-\mu}\right)
\left(\epsilon(l-r)-\frac{\lambda+\nu}{\lambda-\nu}\right)
{\mathcal G}_{p,i}(\mu) {\mathcal G}_{q,l}(\nu){\mathcal G}_{j,r}(\lambda)
\nn\\
&+&
\left(\epsilon(p-q)+\frac{\mu+\nu}{\mu-\nu}\right)
\left(\epsilon(p-j)+\frac{\nu+\lambda}{\nu-\lambda}\right)
{\mathcal G}_{p,i}(\lambda) {\mathcal G}_{q,l}(\mu){\mathcal G}_{j,r}(\nu)
\nn\\
&+&
\left(\epsilon(p-q)+\frac{\mu+\nu}{\mu-\nu}\right)
\left(\epsilon(r-i)-\frac{\nu+\lambda}{\nu-\lambda}\right)
{\mathcal G}_{p,i}(\nu) {\mathcal G}_{q,l}(\mu){\mathcal G}_{j,r}(\lambda)
\nn\\
&+&
\left(\epsilon(l-r)-\frac{\mu+\nu}{\mu-\nu}\right)
\left(\epsilon(p-j)+\frac{\mu+\lambda}{\mu-\lambda}\right)
{\mathcal G}_{p,i}(\lambda) {\mathcal G}_{q,l}(\nu){\mathcal G}_{j,r}(\mu)
\nn\\
&+&
\left(\epsilon(l-r)-\frac{\mu+\nu}{\mu-\nu}\right)
\left(\epsilon(r-i)-\frac{\mu+\lambda}{\mu-\lambda}\right)
{\mathcal G}_{p,i}(\mu) {\mathcal G}_{q,l}(\nu){\mathcal G}_{j,r}(\lambda)
\nn\\
&+&
\left(\epsilon(q-j)+\frac{\nu+\lambda}{\nu-\lambda}\right)
\left(\epsilon(q-p)+\frac{\lambda+\mu}{\lambda-\mu}\right)
{\mathcal G}_{p,i}(\lambda) {\mathcal G}_{q,l}(\mu){\mathcal G}_{j,r}(\nu)
\nn\\
&+&
\left(\epsilon(q-j)+\frac{\nu+\lambda}{\nu-\lambda}\right)
\left(\epsilon(i-l)-\frac{\lambda+\mu}{\lambda-\mu}\right)
{\mathcal G}_{p,i}(\mu) {\mathcal G}_{q,l}(\lambda){\mathcal G}_{j,r}(\nu)
\nn\\
&+&
\left(\epsilon(r-i)-\frac{\nu+\lambda}{\nu-\lambda}\right)
\left(\epsilon(q-p)+\frac{\nu+\mu}{\nu-\mu}\right)
{\mathcal G}_{p,i}(\nu) {\mathcal G}_{q,l}(\mu){\mathcal G}_{j,r}(\lambda)
\nn\\
&+&
\left(\epsilon(r-i)-\frac{\nu+\lambda}{\nu-\lambda}\right)
\left(\epsilon(i-l)-\frac{\nu+\mu}{\nu-\mu}\right)
{\mathcal G}_{p,i}(\mu) {\mathcal G}_{q,l}(\nu){\mathcal G}_{j,r}(\lambda)
\nn
\end{eqnarray}
Note the cancelations between the lines with the numbers $2$ and $7$, $3$ and $10$, and $6$ and $11$.
The lines with the numbers $1$, $5$, and $9$ are proportional to the term
${\mathcal G}_{p,i}(\lambda) {\mathcal G}_{q,l}(\mu){\mathcal G}_{j,r}(\nu)$ with the
proportionality coefficient
\begin{eqnarray}
&&
\left(\epsilon(j-p)+\frac{\lambda+\mu}{\lambda-\mu}\right)
\left(\epsilon(j-q)+\frac{\mu+\nu}{\mu-\nu}\right)
\nn\\
&&+
\left(\epsilon(p-q)+\frac{\mu+\nu}{\mu-\nu}\right)
\left(\epsilon(p-j)+\frac{\nu+\lambda}{\nu-\lambda}\right)
\nn\\
&&+
\left(\epsilon(q-j)+\frac{\nu+\lambda}{\nu-\lambda}\right)
\left(\epsilon(q-p)+\frac{\lambda+\mu}{\lambda-\mu}\right)
\nn\\
&=&\bigl(\epsilon(j-p)\epsilon(j-q)+\epsilon(p-q)\epsilon(p-j)+\epsilon(q-j)\epsilon(q-p)\bigr)
\nn\\
&&+\left(\frac{\lambda+\mu}{\lambda-\mu}\cdot\frac{\mu+\nu}{\mu-\nu}
+\frac{\mu+\nu}{\mu-\nu}\cdot\frac{\nu+\lambda}{\nu-\lambda}
+\frac{\nu+\lambda}{\nu-\lambda}\cdot\frac{\lambda+\mu}{\lambda-\mu}\right).
\nn
\end{eqnarray}
The last line of this expression sums up to $-1$ whereas the combination of the $\epsilon$-factors
in the next to the last line is $1$ unless $j=p=q$ in which case it vanishes. Therefore this coefficient is
$$
-\delta_{jp}\delta_{pq}.
$$
Analogously, the lines with
the numbers $4$, $8$, and $12$ are proportional to
${\mathcal G}_{p,i}(\mu) {\mathcal G}_{q,l}(\nu){\mathcal G}_{j,r}(\lambda)$ with $\delta_{il}\delta_{lr}$.

\medskip

{\bf 1b.}  Terms in $\mathcal G_{j,r}\mathcal G_{q,p}\mathcal G_{i,l}$ with any choice of the arguments.

In this case, all four lines of (\ref{eq:Yangian}) contribute and we have the sum of
five terms
\begin{eqnarray}
&&
\left(\epsilon(i-p)+\frac{1+\lambda\mu}{1-\lambda\mu}\right)
\left(\epsilon(j-q)+\frac{\lambda+\nu}{\lambda-\nu}\right)
{\mathcal G}_{i,l}(\mu) {\mathcal G}_{q,p}(\lambda){\mathcal G}_{j,r}(\nu)
\nn\\
&+&
\left(\epsilon(i-p)+\frac{1+\lambda\mu}{1-\lambda\mu}\right)
\left(\epsilon(p-r)-\frac{\lambda+\nu}{\lambda-\nu}\right)
{\mathcal G}_{i,l}(\mu) {\mathcal G}_{q,p}(\nu){\mathcal G}_{j,r}(\lambda)
\nn\\
&+&
\left(\epsilon(p-r)-\frac{1+\mu\nu}{1-\mu\nu}\right)
\left(\epsilon(r-i)-\frac{1+\mu\lambda}{1-\mu\lambda}\right)
{\mathcal G}_{i,l}(\mu) {\mathcal G}_{q,p}(\nu){\mathcal G}_{j,r}(\lambda)
\nn\\
&+&
\left(\epsilon(q-j)+\frac{\nu+\lambda}{\nu-\lambda}\right)
\left(\epsilon(i-p)+\frac{1+\lambda\mu}{1-\lambda\mu}\right)
{\mathcal G}_{i,l}(\mu) {\mathcal G}_{q,p}(\lambda){\mathcal G}_{j,r}(\nu)
\nn\\
&+&
\left(\epsilon(r-i)-\frac{\nu+\lambda}{\nu-\lambda}\right)
\left(\epsilon(i-p)+\frac{1+\mu\nu}{1-\mu\nu}\right)
{\mathcal G}_{i,l}(\mu) {\mathcal G}_{q,p}(\nu){\mathcal G}_{j,r}(\lambda)
\nn
\end{eqnarray}
Here, again, the first line is canceled with the fourth line and the remaining lines
are all proportional to the term
${\mathcal G}_{i,l}(\mu) {\mathcal G}_{q,p}(\nu){\mathcal G}_{j,r}(\lambda)$
with the proportionality coefficient
\begin{eqnarray}
&&
\left(\epsilon(i-p)+\frac{1+\lambda\mu}{1-\lambda\mu}\right)
\left(\epsilon(p-r)-\frac{\lambda+\nu}{\lambda-\nu}\right)
\nn\\
&+&
\left(\epsilon(p-r)-\frac{1+\mu\nu}{1-\mu\nu}\right)
\left(\epsilon(r-i)-\frac{1+\mu\lambda}{1-\mu\lambda}\right)
\nn\\
&+&
\left(\epsilon(r-i)-\frac{\nu+\lambda}{\nu-\lambda}\right)
\left(\epsilon(i-p)+\frac{1+\mu\nu}{1-\mu\nu}\right)
\nn\\
&=&
\left(\epsilon(i-p)\epsilon(p-r)+\epsilon(p-r)\epsilon(r-i)+\epsilon(r-i)\epsilon(i-p)\right)
\nn\\
&+&
\left(-\frac{1+\lambda\mu}{1-\lambda\mu}\cdot\frac{\lambda+\nu}{\lambda-\nu}
+\frac{1+\mu\nu}{1-\mu\nu}\cdot\frac{1+\mu\lambda}{1-\mu\lambda}
-\frac{\nu+\lambda}{\nu-\lambda}\cdot\frac{1+\mu\nu}{1-\mu\nu}
\right)\nn=\\
&=&
\delta_{ir}\delta_{rp}.
\nn
\end{eqnarray}

\medskip

{\bf 1c.} Terms in $\mathcal G_{j,r}\mathcal G_{p,q}\mathcal G_{l,i}$ with any choice of the arguments.

In this case we again have the sum of five terms
\begin{eqnarray}
&&
\left(\epsilon(l-j)-\frac{1+\lambda\mu}{1-\lambda\mu}\right)
\left(\epsilon(q-j)+\frac{1+\mu\nu}{1-\mu\nu}\right)
{\mathcal G}_{l,i}(\lambda) {\mathcal G}_{p,q}(\mu){\mathcal G}_{j,r}(\nu)
\nn\\
&+&
\left(\epsilon(q-l)+\frac{1+\mu\nu}{1-\mu\nu}\right)
\left(\epsilon(j-l)+\frac{\nu+\lambda}{\nu-\lambda}\right)
{\mathcal G}_{l,i}(\lambda) {\mathcal G}_{p,q}(\mu){\mathcal G}_{j,r}(\nu)
\nn\\
&+&
\left(\epsilon(q-l)+\frac{1+\mu\nu}{1-\mu\nu}\right)
\left(\epsilon(i-r)-\frac{\nu+\lambda}{\nu-\lambda}\right)
{\mathcal G}_{l,i}(\nu) {\mathcal G}_{p,q}(\mu){\mathcal G}_{j,r}(\lambda)
\nn\\
&+&
\left(\epsilon(j-q)+\frac{\nu+\lambda}{\nu-\lambda}\right)
\left(\epsilon(l-q)-\frac{1+\lambda\mu}{1-\lambda\mu}\right)
{\mathcal G}_{l,i}(\lambda) {\mathcal G}_{p,q}(\mu){\mathcal G}_{j,r}(\nu)
\nn\\
&+&
\left(\epsilon(i-r)-\frac{\nu+\lambda}{\nu-\lambda}\right)
\left(\epsilon(l-q)-\frac{1+\mu\nu}{1-\mu\nu}\right)
{\mathcal G}_{l,i}(\nu) {\mathcal G}_{p,q}(\mu){\mathcal G}_{j,r}(\lambda).
\nn
\end{eqnarray}
Here the third line cancels with the fifth line, and the remaining
lines are all proportional to the term
${\mathcal G}_{l,i}(\lambda) {\mathcal G}_{p,q}(\mu){\mathcal G}_{j,r}(\nu)$
with the proportionality coefficient
\begin{eqnarray}
&&
\left(\epsilon(l-j)-\frac{1+\lambda\mu}{1-\lambda\mu}\right)
\left(\epsilon(q-j)+\frac{1+\mu\nu}{1-\mu\nu}\right)+\nn\\
&+&
\left(\epsilon(q-l)+\frac{1+\mu\nu}{1-\mu\nu}\right)
\left(\epsilon(j-l)+\frac{\nu+\lambda}{\nu-\lambda}\right)+\nn\\
&+&
\left(\epsilon(j-q)+\frac{\nu+\lambda}{\nu-\lambda}\right)
\left(\epsilon(l-q)-\frac{1+\lambda\mu}{1-\lambda\mu}\right)=\nn\\
&=&
\left(\epsilon(l-j)\epsilon(q-j)+\epsilon(q-l)\epsilon(j-l)+\epsilon(j-q)\epsilon(l-q)\right)
\nn\\
&+&
\left(-\frac{1+\lambda\mu}{1-\lambda\mu}\cdot\frac{1+\mu\nu}{1-\mu\nu}
+\frac{1+\mu\nu}{1-\mu\nu}\cdot\frac{\nu+\lambda}{\nu-\lambda}
-\frac{\nu+\lambda}{\nu-\lambda}\cdot\frac{1+\lambda\mu}{1-\lambda\mu}
\right)\nn=
-\delta_{jl}\delta_{lq}.
\nn
\end{eqnarray}

{\bf 2.} From the above cases 1a--1c, we see that the Jacobi identity is satisfied separately
for every distribution of indices unless at least three among the indices
$j$, $i$, $p$, $l$, $q$, and $r$ coincide. So, let us consider the bracket with three coinciding indices,
$\bigl\{\left\{{\mathcal G}_{s,i}(\lambda),{\mathcal G}_{s,l}(\mu)\right\},{\mathcal G}_{s,r}(\nu)\bigr\}+\hbox{ cyclic permutations.}$ \
Let us follow the term ${\mathcal G}_{s,i}(\nu){\mathcal G}_{s,l}(\lambda){\mathcal G}_{s,r}(\mu)$.
The coefficient by this term is $
\left(\epsilon(i-l)-\frac{\lambda+\mu}{\lambda-\mu}\right)
\left(\epsilon(i-r)-\frac{\mu+\nu}{\mu-\nu}\right)+\hbox{ cyclic permutations}
$
and it is again easy to see that this term vanishes unless $i=l=r$.  So finally we are left with the case:
$$
\bigl\{\left\{{\mathcal G}_{s,i}(\lambda),{\mathcal G}_{s,i}(\mu)\right\},{\mathcal G}_{s,i}(\nu)\bigr\}+
\bigl\{\left\{{\mathcal G}_{s,i}(\nu),{\mathcal G}_{s,i}(\lambda)\right\},{\mathcal G}_{s,i}(\mu)\bigr\}+
\bigl\{\left\{{\mathcal G}_{s,i}(\mu),{\mathcal G}_{s,i}(\nu)\right\},{\mathcal G}_{s,i}(\lambda)\bigr\}.
$$
In this case, we have
$$
\left\{{\mathcal G}_{s,i}(\lambda),{\mathcal G}_{s,i}(\mu)\right\}=\left(\epsilon(i-s)+\frac{1+\lambda\mu}{1-\lambda\mu}\right)
\left({\mathcal G}_{s,s}(\lambda){\mathcal G}_{i,i}(\mu)-{\mathcal G}_{s,s}(\mu){\mathcal G}_{i,i}(\lambda)\right)
$$
and the result of the double bracket before applying the cyclic symmetry reads
\bea
\allowdisplaybreaks
&&
\bigl\{\left\{{\mathcal G}_{s,i}(\lambda),{\mathcal G}_{s,i}(\mu)\right\},{\mathcal G}_{s,i}(\nu)\bigr\}
=\left(\epsilon(i-s)+\frac{1+\lambda\mu}{1-\lambda\mu}\right)\times
\nn\\
&&\times\biggl\{
\frac{\lambda+\nu}{\lambda-\nu}\bigl(
{\mathcal G}_{s,i}(\nu)[{\mathcal G}_{i,i}(\lambda){\mathcal G}_{s,s}(\mu)+{\mathcal G}_{i,i}(\mu){\mathcal G}_{s,s}(\lambda)]
-\bigr.
\nn\\
&&\qquad\qquad\qquad\qquad\qquad
\bigl.
-{\mathcal G}_{s,i}(\lambda)[{\mathcal G}_{i,i}(\nu){\mathcal G}_{s,s}(\mu)+{\mathcal G}_{i,i}(\mu){\mathcal G}_{s,s}(\nu)]
\bigr)+
\biggr.\nn\\
&&+\frac{\mu+\nu}{\mu-\nu}\bigl(
{\mathcal G}_{s,i}(\mu)[{\mathcal G}_{i,i}(\lambda){\mathcal G}_{s,s}(\nu)+{\mathcal G}_{i,i}(\nu){\mathcal G}_{s,s}(\lambda)]
-\bigr.
\nn\\
&&\qquad\qquad\qquad\qquad\qquad
\bigl.
-{\mathcal G}_{s,i}(\nu)[{\mathcal G}_{i,i}(\lambda){\mathcal G}_{s,s}(\mu)+{\mathcal G}_{i,i}(\mu){\mathcal G}_{s,s}(\lambda)]
\bigr)+
\nn\\
&&+\frac{1+\lambda\nu}{1-\lambda\nu}\bigl(
{\mathcal G}_{s,i}(\nu)[{\mathcal G}_{i,i}(\lambda){\mathcal G}_{s,s}(\mu)+{\mathcal G}_{i,i}(\mu){\mathcal G}_{s,s}(\lambda)]
-\bigr.
\nn\\
&&\qquad\qquad\qquad\qquad\qquad
\bigl.
-{\mathcal G}_{i,s}(\lambda)[{\mathcal G}_{i,i}(\nu){\mathcal G}_{s,s}(\mu)+{\mathcal G}_{i,i}(\mu){\mathcal G}_{s,s}(\nu)]
\bigr)+
\nn\\
&&+\frac{1+\mu\nu}{1-\mu\nu}\bigl(
-{\mathcal G}_{s,i}(\nu)[{\mathcal G}_{i,i}(\lambda){\mathcal G}_{s,s}(\mu)+{\mathcal G}_{i,i}(\mu){\mathcal G}_{s,s}(\lambda)]
+\bigr.
\nn\\
&&\qquad\qquad\qquad\qquad\qquad
\bigl.
+{\mathcal G}_{i,s}(\mu)[{\mathcal G}_{i,i}(\nu){\mathcal G}_{s,s}(\lambda)+{\mathcal G}_{i,i}(\lambda){\mathcal G}_{s,s}(\nu)]
\bigr)+
\nn\\
&&+\epsilon(s-i)\bigl(
[{\mathcal G}_{s,i}(\lambda)+{\mathcal G}_{i,s}(\lambda)]
[{\mathcal G}_{i,i}(\nu){\mathcal G}_{s,s}(\mu)+{\mathcal G}_{i,i}(\mu){\mathcal G}_{s,s}(\nu)]\bigr.-
\nn\\
&&-\biggl.\epsilon(s-i)
[{\mathcal G}_{s,i}(\mu)+{\mathcal G}_{i,s}(\mu)][{\mathcal G}_{i,i}(\nu){\mathcal G}_{s,s}(\lambda)
+{\mathcal G}_{i,i}(\lambda){\mathcal G}_{s,s}(\nu)]
\bigr)\biggr\}.
\nn
\eea
In this expression, the term proportional to the product of two $\epsilon$-functions gives zero under the
cyclic permutation, all the terms proportional to a single $\epsilon$-function are mutually canceled as well
as do all the terms proportional to the products of $\lambda$-factors, so the result is zero, as expected.

{\bf 3}. When five or six indices coincide, the Jacobi identity is satisfied identically because
$$
\{{\mathcal G}_{s,s}(\lambda),{\mathcal G}_{s,s}(\mu)\}\equiv 0.
$$

We have therefore proved the satisfaction of the Jacobi identities for all cases of indices distribution
in the formula (\ref{eq:Yangian}).

\def\thetheorem{C.\arabic{theorem}}
\def\theprop{C.\arabic{prop}}
\def\thelemma{C.\arabic{lm}}
\def\thecor{C.\arabic{cor}}
\def\theexam{C.\arabic{exam}}
\def\theremark{C.\arabic{remark}}
\def\theequation{C.\arabic{equation}}

\appendix{Proof of Proposition~\ref{th-braid}}\label{ap:algebraic-independence}

The proof of the first statement of the proposition is an obvious consequence of
the fact that $\mathcal G(\lambda) =\frac{1}{\lambda^p-1}  \mathcal
G_p(\lambda)$. The fact that the coefficients of
$\det\left({\mathcal G}_p(\lambda)\right)$ are central elements is
an obvious consequence of the braid group action. To prove that
there are exactly $\left[\dfrac{np}{2}\right]$ algebraically
independent central elements, let us write ${\mathcal G}_p(\lambda)$
in terms of the variable $u$ where $\lambda=u^2$. Then, up to
irrelevant multiplier $u^p$,
\be
{\mathcal G}_p(u):={\mathcal A}^{(0)}u^p+\sum_{k=1}^{p-1}{\mathcal G}^{(k)}u^{p-2k}+
{{\mathcal A}^{(0)}}^Tu^{-p},
\label{Gp-u}
\ee
and the symmetry ${\mathcal G}_p(u)=\bigl({\mathcal G}_p(u^{-1})\bigr)^T$ becomes obvious, so
$\det {\mathcal G}_p(u)=\det {\mathcal G}_p(u^{-1})$. We then must prove that elements with
nonnegative powers of $u$ in the expansion of this determinant (except the highest term that is
just $1\cdot u^{pn}$) are algebraically independent. For this, let us consider the form
$$
d \det {\mathcal G}_p(u)=\det {\mathcal G}_p(u)\tr \bigl({\mathcal G}^{-1}_p(u)d {\mathcal G}_p(u)\bigr)
$$
in the vector space of the differentials $dG^{(k)}_{i,j}$
(with the constraint (\ref{level-p-G}) imposed) at the special point at which all $G^{(k)}_{i,j}\equiv 1$.

The matrix ${\overline{\mathcal G}}_p(u)$ for all
$G_{i,j}^{(k)}$ equal to unity has the entries
$u^p+u^{p-2}+\cdots+u^{2-p}+u^{-p}$ on the diagonal, the entries
$u^p+u^{p-2}+\cdots+u^{2-p}$ above the diagonal, and the entries
$u^{p-2}+\cdots+u^{2-p}+u^{-p}$ below the diagonal. In fact, it is
not difficult to find $\det{\overline {\mathcal
G}}_p(u)\cdot{\overline {\mathcal G}}_p^{-1}(u)$. This is the matrix
with all the diagonal terms equal to
$u^{p(n-1)}+u^{p(n-1)-2}+\cdots+u^{-p(n-1)+2}+u^{-p(n-1)}$, with the
$\{i,j\}$ entry above the diagonal equal to
$-u^{p(n-2|j-i|)}(u^p+u^{p-2}+\cdots+u^{2-p})$, and with the
$\{i,j\}$ entry below the diagonal equal to
$-u^{-p(n-2|i-j|)}(u^{p-2}+\cdots+u^{2-p}+u^{-p})$.
We introduce the
standard scalar multiplication on the linear space of differentials
(taking into account the symmetry (\ref{level-p-G})):
$$
\left\langle
dG_{i,j}^{(k)}\Bigm|dG_{l,m}^{(s)}\right\rangle=\delta_{i,l}\delta_{j,m}\delta_{k,s}+\delta_{i,m}\delta_{j,l}\delta_{k,p-s},\
\hbox{for}\ i\ne j\ \hbox{or}\ s\ne p/2.
$$
We then segregate the coefficients standing by nonnegative powers of
$u$ and $v$ in the bilinear form:
$$
\Bigl\langle\det {\overline {\mathcal G}}_p(u)\tr
\bigl({\overline{\mathcal G}}^{-1}_p(u)d {\mathcal
G}_p(u)\bigr)\Bigm| \det {\overline {\mathcal G}}_p(v)\tr
\bigl({\overline{\mathcal G}}^{-1}_p(v)d {\mathcal
G}_p(v)\bigr)\Bigr\rangle.
$$
(We perform the calculations for even $n$ and odd $p$, other cases can be treated analogously.)
The bilinear form of variations of central elements (its
$\{i,j\}$ entries are the coefficients of $n\cdot u^{pn-2i}v^{pn-2j}$,
$i,j=1,\dots,pn/2$ in the above expression) is the sum of the following four $(np/2)\times
(np/2)$ matrices (the first two of them come from the brackets between differentials of nondiagonal
entries and the last two arise from the brackets between diagonal term differentials)
$$
{\psset{unit=0.6}
\begin{pspicture}(-7,-5)(7,5.5)
\newcommand{\FRAME}{%
\psframe[linewidth=0.5pt, fillstyle=solid,
fillcolor=white](-3,-2.7)(3,2.7)
}
\newcommand{\EMPTYFRAME}{%
\psframe[linecolor=white, fillstyle=solid,
fillcolor=white](1.8,-1.5)(7.8,3.5)
\pcline[linewidth=0.5pt](1.8,-1.5)(7.8,-1.5)
\pcline[linewidth=0.5pt](7.8,3.5)(7.8,-1.5) } \rput(2.5,-2){\FRAME}
\rput(-3.2,-2.2){\EMPTYFRAME} \rput(0.0,0.0){\FRAME}
\rput(-1.4,.9){\FRAME} \rput(-2.7,2.1){\FRAME}
\rput(0,0){\makebox(0,0)[cc]{$\small\left(\begin{array}{ccccccccc}
                               1 & 1 & 1 & \dots & 1 & 0& 0& \dots &0\\
                                1 & 2 & 2 & \dots & 2 & 1&0& \ddots &\vdots \\
                                1 & 2 & 3 & \dots & 3 & 2& 1&\ddots& 0 \\
                                \vdots & \vdots & \vdots & \ddots & \vdots & 3& 2& \ddots& 0 \\
                                1 & 2 & 3 & \dots & p & \vdots &3 &\ddots &1 \\
                                0 & 1 & 2 & 3 & \dots & p & \vdots & \ddots &2 \\
                                0 & 0 & 1 & 2 & 3 & \dots & p &\ddots & 3 \\
                                \vdots & \ddots & \ddots & \ddots & \ddots & \ddots &\ddots &\ddots &\vdots \\
                                0 & \dots & 0 & 0 & 1 & 2 &3 &\dots &p \\
                              \end{array}\right),$}}
\end{pspicture}
}
$$
$$
{\psset{unit=0.6}
\begin{pspicture}(-7,-4)(7,4.5)
\newcommand{\FRAME}{%
\psframe[linewidth=0.5pt, fillstyle=solid,
fillcolor=white](-3,-2.7)(3,2.7)
}
\newcommand{\EMPTYFRAME}{%
\psframe[linecolor=white, fillstyle=solid,
fillcolor=white](1.8,-1.5)(7.8,3.5)
}
\rput(2,-1.7){\FRAME}
\rput(-3.5,-1.9){\EMPTYFRAME}
\def\udots{\kern-.2em
  \lower1ex\hbox{$\cdot$}\cdot\kern0em\lower-1ex\hbox{$\cdot$}
  }
\rput(0,0){\makebox(0,0)[cc]{$\small\left(\begin{array}{cccccccc}
                               0 & \dots & \dots & \dots & \dots & \dots& \dots &0\\
                                \vdots & & &   & && \udots &\vdots \\
                                \vdots & & & & & \udots & & 0 \\
                                \vdots &  &  &  & \udots  & &0  &1 \\
                                \vdots &  &   & \udots &  & \udots & 1 &2 \\
                                \vdots &  & \udots &   & \udots & \udots &\udots & 3 \\
                                \vdots & \udots & &0 & 1 & 2& \udots & \vdots \\
                                0 & \dots & 0  & 1 & 2 &3 &\dots &p \\
                              \end{array}\right)$}}
\end{pspicture}
}
$$
$$
{\psset{unit=0.6}
\begin{pspicture}(-5,-3.6)(11,4)
\newcommand{\FRAME}{%
\psframe[linewidth=0.5pt, fillstyle=solid,
fillcolor=white](-2.2,-1.5)(2.2,1.5)
}
\newcommand{\EMPTYFRAME}{%
\psframe[linecolor=white, fillstyle=solid,
fillcolor=white](1.8,-1.5)(7.8,3.5)
}
\rput(-2.3,1.1){\FRAME}
\def\udots{\kern-.2em
  \lower1ex\hbox{$\cdot$}\cdot\kern0em\lower-1ex\hbox{$\cdot$}
  }
\rput(0,0){\makebox(0,0)[cc]{$\small\left(\begin{array}{ccccccc}
                               1 & 1 & \dots & 1 & 1 & \dots & 1 \\
                               1 & 2 & \dots & 2 & 2 & \dots & 2 \\
                               \vdots &\vdots  &\ddots  & \vdots & \vdots &  &\vdots \\
                               1 & 2 & \dots & p{-}1 & p{-}1 & \dots & p{-}1 \\
                               1 & 2 & \dots & p{-}1 & p{-}1 & \dots & p{-}1 \\
                               \vdots &\vdots  &  & \vdots & \vdots & \ddots  &\vdots \\
                               1 & 2 & \dots & p{-}1 & p{-}1 & \dots & p{-}1 \\
                              \end{array}\right)$}}
\end{pspicture}
}
{\psset{unit=0.6}
\begin{pspicture}(7,-3.6)(0,0)
\newcommand{\FRAME}{%
\psframe[linewidth=0.5pt, fillstyle=solid,
fillcolor=white](-2.5,-1.7)(2.5,1.7)
}
\newcommand{\EMPTYFRAME}{%
\psframe[linecolor=white, fillstyle=solid,
fillcolor=white](1.8,-1.5)(7.8,3.5)
}
\rput(-2.4,1.15){\FRAME}
\def\udots{\kern-.2em
  \lower1ex\hbox{$\cdot$}\cdot\kern0em\lower-1ex\hbox{$\cdot$}
  }
\rput(0,0){\makebox(0,0)[cc]{$\small\left(\begin{array}{ccccccc}
                               0 & \dots & 0 & 1 & 1 & \dots & 1 \\
                               \vdots & \udots & \udots & 2 & 2 & \dots & 2 \\
                               0 & 1  &\udots  & \vdots & \vdots &  &\vdots \\
                               1 & 2 & \dots & p{-}1 & p{-}1 & \dots & p{-}1 \\
                               1 & 2 & \dots & p{-}1 & p{-}1 & \dots & p{-}1 \\
                               \vdots &\vdots  &  & \vdots & \vdots & \ddots  &\vdots \\
                               1 & 2 & \dots & p{-}1 & p{-}1 & \dots & p{-}1 \\
                              \end{array}\right)$}}
\end{pspicture}
}
$$
We now perform the following row and column operations: first, we subtract the first row from $p-1$ subsequent rows,
then subtract the second row from $p-1$ subsequent rows, etc. Then, we perform the same operation with columns: subtract
the first column from $p-1$ subsequent columns, etc. The first matrix then becomes just the $(np/2)\times (np/2)$ unit matrix,
the second matrix will contain just one nonzero $p\times p$ block in the lower right corner: in this block we obtain the
matrix with unities on the antidiagonal and zeros elsewhere; the third and fourth matrices become $(np/2)\times (np/2)$-matrices
composed from $n^2/4$ equal $p\times p$ blocks: these blocks are
$$
\def\udots{\kern-.2em
  \lower1ex\hbox{$\cdot$}\cdot\kern0em\lower-1ex\hbox{$\cdot$}
  }
\small\left(\begin{array}{ccccc}
                               1 & 0 & \dots & 0 & 0 \\
                               0 & 1 & \ddots & 0 & 0 \\
                               0 & 0  &\ddots  & \ddots & \vdots \\
                               \vdots & \vdots & \ddots & 1 & 0 \\
                               0 & 0 & \dots & 0 & 0 \\
                              \end{array}\right),
\ \hbox{and}\
\left(\begin{array}{ccccc}
                               0  & \dots & 0 & 1 & 0 \\
                               \vdots & \udots & \udots & 0 & 0 \\
                               0 & 1  &\udots  & \udots & \vdots \\
                               1 & 0  &\dots  & 0 & 0 \\
                               0 & 0 & \dots & 0 & 0 \\
                              \end{array}\right)
$$
for the respective third and fourth matrices. We then subtract, first, the upper $p\times (np/2)$ row-block from all the lower blocks
and then subtract the last $(np/2)\times p$ column-block from all the preceding ones. In the last two matrices only the upper-right
$p\times p$ block remains nonzero whereas in the sum of the first two matrices we have nonzero $p\times p$ blocks on the diagonal and
on the bottom and left sides. In the obtained matrix, we perform the following recurrent procedure: expand over $(p-1)$st row (only
$E_{p-1,p-1}$ entry is nonzero), then subtract the very last row from the $p$th row from the bottom and expand over the obtained
$p$th line from the bottom (only the very first entry, $E_{1,pn/2-p+1}$, is nonzero). Then subtract the first line from the $(p-1)$th
row and expand, first, over the very last column (only the entry $E_{np/2,np/2}$ is nonzero) and then over $pn/2-p+1$th column
(only the first entry, $E_{pn/2-p+1,1}$, is nonzero). Eventually, expand over the first and the last rows in all the intermediate blocks
(only entries on the diagonal are nonzero). After this chain of operations, we come to the block-diagonal matrix, which has the same
form as the initial one, but has all blocks of size $(p-2)\times (p-2)$ instead of $p\times p$. Continuing this procedure (in the case
of odd $p$, we come on the last stage to the block matrix with blocks of size $1\times1$. The upper right block of such a matrix is zero,
the lower-right block is $2$, all the diagonal blocks are $1$, and the matrix is lower-triangular, therefore its determinant is $2$.
The proposition is proved.


\end{document}